\g@addto@macro\bfseries{\boldmath} 
\tikzstyle{startstop} = [rectangle, rounded corners, minimum width=2cm, minimum height=1cm,text centered,text width=2cm, draw=black]
\tikzstyle{io} = [rectangle, rounded corners, minimum width=2cm, minimum height=1cm,text centered,text width=2cm, draw=black]
\tikzstyle{arrow} = [thick,->,>=stealth]
\tikzstyle{arrow2} = [dashed,->,>=stealth]
\numberwithin{equation}{section}
\newcommand{\abs}[1]{\left\lvert#1\right\rvert}
\newcommand{\norm}[1]{\left\lVert#1\right\rVert}
\newcommand{\N}{\mathbb{N}}
\newcommand{\K}{\mathbb{K}}
\newcommand{\eps}{\varepsilon}
\newcommand{\scal}[1]{\left\langle#1\right\rangle}
\newcommand{\nnorm}[1]{{\left\vert\kern-0.25ex\left\vert\kern-0.25ex\left\vert #1%
    \right\vert\kern-0.25ex\right\vert\kern-0.25ex\right\vert}}
\DeclareMathOperator{\dent}{dent}
\DeclareMathOperator{\Id}{Id}
\DeclareMathOperator{\diam}{diam}
\DeclareMathOperator{\conv}{conv}
\DeclareMathOperator{\cconv}{\overline{conv}}
\DeclareMathOperator{\real}{Re}
\DeclareMathOperator{\spn}{span}
\DeclareMathOperator{\cspn}{\overline{span}}
\DeclareMathOperator{\ext}{ext}
\DeclareMathOperator{\strexp}{str-exp}
\DeclareMathOperator{\PC}{PC}
\DeclareMathOperator{\cof}{cof}
\DeclareMathOperator{\Spear}{Spear}
\DeclareMathOperator{\re}{Re\,}
\newcommand{\T}{\mathbb{T}}
\newcommand{\R}{\mathbb{R}}
\renewcommand{\geq}{\geqslant}
\renewcommand{\leq}{\leqslant}
\theoremstyle{plain}
 \newtheorem{theorem}{Theorem}[section]
 \newtheorem{lemma}[theorem]{Lemma}
 \newtheorem{proposition}[theorem]{Proposition}
 \newtheorem{corollary}[theorem]{Corollary}
\theoremstyle{definition}
 \newtheorem{definition}[theorem]{Definition}
 \newtheorem*{definition*}{Definition}
 \newtheorem{remark}[theorem]{Remark}
 \newtheorem{example}[theorem]{Example}
 \newtheorem{question}[theorem]{Question}
\newcommand{\modaus}[2]{
\overline{\rho}_{#1}\left( #2 \right)
}
\begin{document}

\title{The super Alternative Daugavet property for Banach spaces}

\author[Langemets]{Johann Langemets}
\address[Langemets]{Institute of Mathematics and Statistics, University of Tartu, Narva mnt 18, 51009 Tartu, Estonia}
\email{johann.langemets@ut.ee}
\urladdr{\url{https://johannlangemets.wordpress.com/}}
\urladdr{
\href{https://orcid.org/0000-0001-9649-7282}{ORCID: \texttt{0000-0001-9649-7282} } }

\author[L\~{o}o]{Marcus L\~{o}o}
\address[L\~{o}o]{Institute of Mathematics and Statistics, University of Tartu, Narva mnt 18, 51009 Tartu, Estonia}
\email{marcus.loo@ut.ee}
\urladdr{
\href{https://orcid.org/0009-0003-1306-5639}{ORCID: \texttt{0009-0003-1306-5639} } }

\author[Mart\'in]{Miguel Mart\'in}
\address[Mart\'in]{Department of Mathematical Analysis and Institute of Mathematics (IMAG), University of Granada, E-18071 Granada, Spain}
\email{mmartins@ugr.es}
\urladdr{\url{https://www.ugr.es/local/mmartins/}}
\urladdr{
\href{http://orcid.org/0000-0003-4502-798X}{ORCID: \texttt{0000-0003-4502-798X} } }

\author[Perreau]{Yo\"el Perreau}
\address[Perreau]{Institute of Mathematics and Statistics, University of Tartu, Narva mnt 18, 51009 Tartu, Estonia}
\email{yoel.perreau@ut.ee}
\urladdr{
\href{https://orcid.org/0000-0002-2609-5509}{ORCID: \texttt{0000-0002-2609-5509} } }

\author[Rueda Zoca]{Abraham Rueda Zoca}
\address[Rueda Zoca]{Department of Mathematical Analysis and Institute of Mathematics (IMAG), University of Granada, E-18071 Granada, Spain}
\email{abrahamrueda@ugr.es}
\urladdr{\url{https://arzenglish.wordpress.com}}
\urladdr{
\href{https://orcid.org/0000-0003-0718-1353}{ORCID: \texttt{0000-0003-0718-1353} } }

\subjclass[2020]{Primary 46B20; Secondary 46B04, 46B22, 46B25}

\keywords{Radon--Nikod\'ym property, Asplund spaces, Daugavet property, Alternative Daugavet property} 

\date{\today}

\begin{abstract}
 We introduce the super alternative Daugavet property (super ADP) which lies strictly between the Daugavet property and the Alternative Daugavet property as follows. A Banach space $X$ has the super ADP if for every element $x$ in the unit sphere and for every relatively weakly open subset $W$ of the unit ball intersecting the unit sphere, one can find an element $y\in W$ and a modulus one scalar $\theta$ such that $\|x+\theta y\|$ is almost two. It is known that spaces with the Daugavet property satisfy this condition, and that this condition implies the Alternative Daugavet property. We first provide examples of super ADP spaces which fail the Daugavet property. We show that the norm of a super ADP space is rough, hence the space cannot be Asplund, and we also prove that the space fails the point of continuity property (particularly, the Radon--Nikod\'ym property). In particular, we get examples of spaces with the Alternative Daugavet property that fail the super ADP. For a better understanding of the differences between the super ADP, the Daugavet property, and the Alternative Daugavet property, we will also consider the localizations of these three properties and prove that they behave rather differently. As a consequence, we provide characterizations of the super ADP for spaces of vector-valued continuous functions and of vector-valued integrable functions.
\end{abstract}

\maketitle

{\parskip=0pt
\tableofcontents } 

\section{Introduction}
Given a Banach space $X$ over $\mathbb{K}$ ($\mathbb{K}=\mathbb{C}$ or $\mathbb{K}=\mathbb{R}$), we denote its dual as $X^*$, the unit ball and the unit sphere of $X$ as $B_X$ and $S_X$, respectively.
We say that $X$ satisfies the \emph{Daugavet property} (\emph{DP} for short), if the equation
\[
\norm{\Id + T} = 1+\norm{T}
\]
holds for every bounded rank-one operator $T\colon X \to X$, where $\Id$ is the identity operator of $X$ (see \cite{kssw} for background). A related property to the former is the \emph{Alternative Daugavet property} (\emph{ADP} for short), introduced in \cite{MT04}. The Banach space $X$ satisfies the ADP, if the norm equation
\[
\max_{\theta\in\mathbb{T}}\norm{\Id+\theta T}= 1+\norm{T}
\]
holds for every bounded rank-one operator $T\colon X\to X$. Here $\mathbb{T}$ denotes the set of all modulus one scalars of $\mathbb{K}$. It is obvious, by the definitions, that the DP implies the ADP. However, the converse fails - for example all the spaces $C(K)$, $L_1(\mu)$, and $L_1(\mu)$ preduals, have the ADP (even finite-dimensional ones) but, in general, not the DP, for which the perfection of $K$ or the non-atomicity of $\mu$ is needed. In particular, the ADP is compatible with the Radon--Nikod\'ym property (RNP) and Asplundness. However, spaces with the DP fail these properties. Also, a Banach space with the DP cannot be embedded into a Banach space with unconditional basis. 

Both the DP and the ADP admit a geometric characterization via slices of the unit ball. Recall that a \emph{slice} of a bounded subset $C$ of $X$ is the non-empty intersection of $C$ with an open half-space, that is, a set of the form
\[
S(C,x^*,\alpha) := \left\{x\in C\colon \real x^*(x) > \sup \real x^*(C)-\alpha\right\},
\]
where $x^*\in X^*\setminus \{0\}$ and $\alpha>0$. 

\begin{proposition}[{\cite[Lemma~2.2]{kssw}, \cite[Proposition~2.1]{MT04}}]\label{prop: geometric version of DP and ADP}\parskip=0pt
    Let $X$ be a Banach space.
    \begin{itemize}
        \item $X$ has the DP if and only if $
    \sup\limits_{y\in S}\|x+y\|=2$ for every $x\in S_X$ and every slice $S$ of $B_X$.
    \item $X$ has the ADP if and only if $
    \sup\limits_{y\in S}\max\limits_{\theta\in\T}\|x+\theta y\|=2$ for every $x\in S_X$ and every slice $S$ of $B_X$.
    \end{itemize}   
\end{proposition}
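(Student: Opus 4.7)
The plan is to translate both characterizations through the identity
\[
\|\Id + T\| = \sup_{z \in B_X} \|z + x^*(z) y_0\|, \qquad \|\Id + \theta T\| = \sup_{z \in B_X} \|z + \theta x^*(z) y_0\|,
\]
valid for any rank-one operator $T(z) = x^*(z) y_0$. Both sides of the DP and ADP equalities are homogeneous in $x^*$ and $y_0$, so I may normalize $\|x^*\| = \|y_0\| = 1$, reducing each equivalence to showing that the relevant supremum on the right is $2$. The main tool will be the elementary fact that on a slice $S(B_X, x^*, \alpha)$, the scalar $x^*(y)$ is close to $1$: since $\real x^*(y) > 1 - \alpha$ and $|x^*(y)| \leq 1$, one has $|x^*(y) - 1| < \sqrt{2\alpha}$ (in the complex case bounding the imaginary part via $|x^*(y)|^2 - (\real x^*(y))^2$, the real case is immediate).

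For the $(\Leftarrow)$ directions, I fix a rank-one $T$ as above, $\varepsilon > 0$, and a small $\alpha > 0$. The hypothesis applied to $x = y_0$ and the slice $S = S(B_X, x^*, \alpha)$ yields $y \in S$ with $\|y_0 + y\| > 2 - \varepsilon$ in the DP case, and additionally a modulus-one $\theta_0$ with $\|y_0 + \theta_0 y\| > 2 - \varepsilon$ in the ADP case. The slice estimate then gives
\[
\|y + x^*(y) y_0\| \geq \|y + y_0\| - |x^*(y) - 1| > 2 - \varepsilon - \sqrt{2\alpha}
\]
in the DP case, and similarly $\|y + \bar{\theta_0} x^*(y) y_0\| > 2 - \varepsilon - \sqrt{2\alpha}$ after rewriting $\|y_0 + \theta_0 y\| = \|y + \bar{\theta_0} y_0\|$. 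Letting $\varepsilon, \alpha \to 0$ this produces $\|\Id + T\| = 2 = 1 + \|T\|$ and $\|\Id + \bar{\theta_0} T\| \to 2$, respectively, so $\max_{\theta \in \T}\|\Id + \theta T\| = 1 + \|T\|$.

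For the $(\Rightarrow)$ directions, given $x \in S_X$ and $S = S(B_X, x^*, \alpha)$ with $\|x^*\| = 1$, I apply the DP (resp.\ ADP) to the rank-one operator $T(z) := x^*(z) x$, which has norm $1$. The norm identity supplies $z \in B_X$ (and, in the ADP case, some $\theta \in \T$) such that $\|z + x^*(z) x\|$ (resp.\ $\|z + \theta x^*(z) x\|$) is greater than $2 - \varepsilon$; this forces $|x^*(z)| > 1 - \varepsilon$. Taking $\eta \in \T$ with $\eta x^*(z) = |x^*(z)|$ and setting $y := \eta z$, the point $y$ lies in $S$ as soon as $\varepsilon < \alpha$; multiplying the original inequality by $\eta$ and approximating $|x^*(z)|$ by $1$ then gives $\|x + y\| > 2 - 2\varepsilon$ in the DP case and $\|x + \bar{\theta} y\| > 2 - 2\varepsilon$ in the ADP case (after absorbing $\eta$ into the rotated norm). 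The only real subtlety, and the one place where one has to be careful, is the bookkeeping of the modulus-one scalars in the ADP case---ensuring that the $\theta$ witnessing $\|\Id + \theta T\|$ and the rotation $\eta$ placing $y$ into the prescribed slice are interchanged correctly inside the norm---but once this is in order, the rest of the argument is a direct computation using the slice estimate $|x^*(y)-1|<\sqrt{2\alpha}$.
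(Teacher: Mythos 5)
The paper itself does not prove this proposition; it is cited directly from \cite[Lemma~2.2]{kssw} and \cite[Proposition~2.1]{MT04}. Your argument correctly reconstructs the standard proof given in those references: translate $\|\Id+T\|$ into a supremum over the unit ball, use the slice condition on $S(B_X,x^*,\alpha)$ to force $x^*(y)$ close to $1$, and in the other direction apply the operator condition to the rank-one map $x^*\otimes x$ and rotate the resulting witness into the slice. This is exactly the route taken in the literature.

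The one place where you gloss over a step is the opening "both sides \ldots\ are homogeneous in $x^*$ and $y_0$." On the slice side this is harmless (scaling $x^*$ and $\alpha$ simultaneously leaves $S(B_X,x^*,\alpha)$ unchanged), but on the operator side the equation $\|\Id+T\|=1+\|T\|$ is \emph{not} homogeneous in $T$, so reducing the $(\Leftarrow)$ direction to norm-one rank-one $T$ is not automatic. What actually closes the gap is a convexity estimate: if $\|u+v\|>2-\eps$ with $\|u\|,\|v\|\leq 1$, then for all $s,t\geq 0$ one has $\|su+tv\|>s+t-\max(s,t)\,\eps$ (for $s\leq t$ write $su+tv=t(u+v)-(t-s)u$). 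Applied to $u=y$, $v=T_0y$, $s=1$, $t=\|T\|$, this upgrades $\|\Id+T_0\|=2$ for norm-one $T_0$ to $\|\Id+T\|=1+\|T\|$ for arbitrary rank-one $T$; the paper's Remark~\ref{remark:convexityargument} is a special case of this. With that caveat made explicit, your proof is complete and correct.
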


Shvydkoy observed in \cite[Lemma~3]{SHVYDKOY00} that the DP can also be characterized in terms of relatively weakly open subsets of the unit ball: $X$ has the DP if and only if for every $x\in S_X$ and every non-empty relatively weakly open subset $W$ of $B_X$, we have
$\sup\limits_{y\in W}\|x+y\|=2$. However, when one replaces slices with relatively weakly open subsets in the definition of the ADP, we arrive to a different property of Banach spaces.

\begin{definition}\label{def: SAD}
    We say that a Banach space $X$ has the \emph{super alternative Daugavet property} (\textit{super ADP} for short) if for every $x\in S_X$ and every non-empty relatively weakly open subset $W$ of $B_X$ intersecting $S_X$, we have 
    \begin{equation*}
        \sup_{y\in W}\max_{\theta\in\T}\norm{x+\theta y}=2. 
    \end{equation*}
\end{definition}
Let us remark that for infinite-dimensional spaces $X$, the requirement that the relatively weakly open subsets interesect the unit sphere is redundant. However, there is an easy example of finite-dimensional space with the super ADP: the one dimensional space, which is actually the only example, see Proposition~\ref{prop: K has the super ADP}.

From Shvydkoy's observation and the geometric characterization of the ADP, we immediately get the following chain of implications:
\[
\text{DP}\implies\text{super ADP}\implies\text{ADP.}
\]
We will see in Section~\ref{section: SAD spaces} that none of the above implications reverses.

Let us also comment that Shvydkoy's result actually works with convex combination of slices and that there is a characterization of the DP in these terms. This naturally opens the possibility of defining another property (say ``ccs ADP'') with the same spirit behind Definition~\ref{def: SAD}. However, this would actually lead to a characterization of the Daugavet property, see Corollary~\ref{corollary: ccs-AD is DP}. Therefore, the super ADP seems to be the only generalization that may produce a new property between the DP and the ADP.

Our main goal in this paper is to initiate the study of the super ADP. After a section on Notation and Preliminaries (Section~\ref{section:preliminaries}), we provide in Section~\ref{section: SAD spaces} the main properties and examples of Banach spaces with the super ADP. We begin by proving that the only Banach space satisfying the Kadec property (in particular, being finite-dimensional) with the super ADP is the one-dimensional one (Proposition~\ref{prop: K has the super ADP}). Hence, the $n$-dimensional $\ell_1$ and $\ell_\infty$ spaces (for $n$ greater than one) and $\ell_1$ are examples of spaces with the ADP but lacking the super ADP. Since there is no finite-dimensional space with the DP, this result will also distinguish super ADP spaces from the DP. Furthermore, we can also separate the above two properties in the infinite-dimensional setting: every Banach space with the DP can be equivalently renormed so that it has the super ADP, but lacks the DP (Theorem~\ref{theorem: renorming having SAD but not the DP}). We next study the isomorphic structure of the super ADP spaces, which looks more similar to the structure of DP spaces than to that of ADP ones: spaces having the super ADP will fail the convex point of continuity property (CPCP for short) (Theorem~\ref{theorem: super ADP fails CPCP}), hence fail the RNP, and their norms are rough (Theorem~\ref{theorem: super ADP is not Asplund}), hence they cannot be Asplund spaces. To get the result for the CPCP we provide a separable determination of the super ADP (Corollary~\ref{corollary:separbledetermination}).

Our second goal of this paper is to delve deeper into the differences between the ADP, the super ADP, and the Daugavet property by considering their corresponding ``localizations''. The investigation of pointwise versions of the Daugavet property was started in \cite{AHLP}, and  stronger versions were introduced in \cite{MPRZ}. Let us present the main definitions here. Let $X$ be a Banach space. A point $x\in S_X$ is said to be a \emph{Daugavet point} \cite{AHLP}, if for every slice $S$ of $B_X$, we have  $\sup_{y\in S}\|x+y\|=2$. Hence, a Banach space $X$ satisfies the DP if and only if every point of $S_X$ is a Daugavet point. Let us note here that Daugavet points are much more versatile than the global property: there exists a Banach space with RNP and a Daugavet point \cite{HLPV23,VeeorgStudia} and there exists a Banach space with a one-unconditional basis and a ``large'' subset of Daugavet points \cite{ALMT}. Stronger variants of Daugavet points were first introduced in \cite{MPRZ}. A point $x\in S_X$ is said to be a \emph{super Daugavet point}, if for every non-empty relatively weakly open subset $W$ of $B_X$, we have  $\sup_{y\in W}\|x+y\|=2$. By Shvydkoy's result \cite[Lemma~3]{SHVYDKOY00}, one has that $X$ satisfies the DP if and only if every point of $S_X$ is a super Daugavet point.  Further, a point  $x\in S_X$ is said to be a \emph{ccs Daugavet point}, if for every convex combination of slices $C$ of $B_X$, we have  $\sup_{y\in C}\|x+y\|=2$. Also by the Shvydkoy's result mentioned above, $X$ has the DP if and only if every element of $S_X$ is a ccs Daugavet point. Even though the three localizations provide the same global property, there are spaces were Daugavet points, super Daugavet points, and ccs Daugavet points differ from one another \cite{MPRZ}. 

Motivated by the above definitions, we will consider here similar localizations for the ADP and the super ADP.

\begin{definition}\parskip=0pt
    Let $X$ be a Banach space and $x\in S_X$. We say that $x$ is \begin{enumerate}
     \item an \emph{AD point} if for every slice $S$ of $B_X$, we have 
     $
     \sup\limits_{y\in S}\max\limits_{\theta\in\T}\norm{x+\theta y}=2;$
\item a \emph{super AD point} if for every non-empty relatively weakly open subset $W$ of $B_X$ intersecting $S_X$, we have 
  $     \sup\limits_{y\in W} \max\limits_{\theta\in\T}\norm{x+\theta y}=2.$
    \end{enumerate}
\end{definition}

We do not formally introduce the ``ccs AD points'' as they coincide with ccs Daugavet points, see Proposition~\ref{proposition: ccs AD is the same as ccs Daugavet}.

From the definitions above, one immediately has that a Banach space has the ADP (respectively, the super ADP) if and only if every point in the unit sphere is an AD point (respectively, super AD point). We devote Section~\ref{section: Alternative Daugavet points and related notions}
to study AD points and super AD points. In Subsection~\ref{subsection:examplesandcomparision} we start by relating these new pointwise notions to some existing diametral notions and to the concept of spear vectors of \cite{Ardalani2014,KMMP}.  (Figure~\ref{fig:relations between notions} contains a diagram of the relations between the diametral notions.) This allows us to present a description of the super AD points in $\ell_\infty^m$ for $m\in \N$ and in $\ell_1(\Gamma)$ spaces (Example~\ref{example:ellinftym-ell1Gamma}), and to show that $c_0$ has no super AD points (Example~\ref{example: c_0 AD but no SAD points}). Next, we improve some results on Daugavet and super Daugavet points, showing that they are AD points and super AD points, respectively, ``in every direction'' (Propositions \ref{prop:superDpoint-is-superADPineverydirection} and \ref{prop: Daugavet point makes all rotations far for uniform element}). We next provide some relations between AD points and super AD points with denting points and points of continuity, respectively, which allow to give characterizations of the two notions for RNP and CPCP spaces, see Corollary~\ref{corollary: characterizing AD and super AD points RNP and CPCP}. The subsection ends showing that ``ccs AD points'' are actually ccs Daugavet points, Proposition~\ref{proposition: ccs AD is the same as ccs Daugavet}. Subsection~\ref{subsec:implications-of-points-in-geometry} is devoted to the implications of AD points and super AD points for the geometry of the underlying space. We show that no AD point can be a LUR point (so, it cannot be a point of uniform convexity) unless the dimension is one, see Proposition~\ref{prop: LUR points no AD points}. We also show that for Banach spaces with the CPCP, no point of G\^{a}teaux differentiability of the norm can be a super AD point (Proposition~\ref{prop:PC-wdense-GdifnosuperAD}). Finally, asymptotically smooth points of the unit sphere of an infinite-dimensional Banach space cannot be super AD points (Proposition~\ref{prop: frechet diff points is not super AD point}), generalizing the fact that $c_0$ has no super AD point. As a consequence, infinite $\ell_p$- ($1<p<\infty$) and $c_0$-sums of finite-dimensional Banach spaces contain no super AD points.

Our goal in Section~\ref{section: AD and super AD points in classical Banach spaces} is to study the super AD points and the super ADP in spaces of (vector-valued) continuous functions and of (vector-valued) integrable functions. We start in Subsection~\ref{subsec: SAD in absolute sums} studying the super AD points of $\ell_1$- and $\ell_\infty$-sums of Banach spaces, getting results which sometimes differ from the known ones for the super Daugavet property. Next, we apply these results (and the known results about the super Daugavet points) to get characterizations of  super AD points in spaces of vector-valued integrable functions (Proposition~\ref{prop:Daugavet points in L_1(mu,X)}) and in spaces of vector-valued continuous functions (Proposition~\ref{prop: super AD in C(K,X)}). All the previous results in this section allows us to present in Subsection~\ref{subsec:exmples of super AD spaces} characterizations of the super ADP for spaces of vector-valued integrable functions (Theorem~\ref{theorem:characterizing-superADPL1muX}) and for spaces of vector-valued continuous functions (Theorem~\ref{theorem:characterizing-superADPL1muX}).

\section{Notation and preliminaries}\label{section:preliminaries}
In this short section, we recall a few classical notions from Banach space geometry that we will be using throughout the text.
The notation of this paper is standard, following along the lines of \cite{FHPPMZ}.

Let $X$ be a Banach space and let $A$ be a non-empty closed convex bounded subset of $X$. We denote by $\spn(A)$, $\conv(A)$, $\cspn(A)$, and $\cconv(A)$ the linear span and convex hull of $A$, as well as their respective closures. The following notions are well studied properties for points of $A$.
We say that a point $x$ in $A$ is 
\begingroup\parskip=0pt
\begin{enumerate}
    \item an \emph{extreme point} of $A$ (writing $a\in \ext(A)$) if $x$ does not belong to the interior of any segment of $A$;
    \item a \emph{point of continuity} of $A$ (writing $a\in \PC(A)$) if the identity mapping $\Id\colon (A,w)\to(A,\norm{\cdot})$ is continuous at $x$ (that is, if $x$ is contained in relatively weakly open subset of $A$ of arbitrarily small diameter);
    \item a \emph{denting point} of $A$ (writing $a\in \dent(A)$) if $x$ is contained in slices of $A$ of arbitrarily small diameter;
    \item a \emph{strongly exposed point} of $A$ (writing $a\in \strexp(A)$) if there exists $x^*\in X^*$ such that for all sequences $(x_n)\subset A$,  $\re x^*(x_n)\rightarrow \re x^*(x)$ if and only if $x_n\rightarrow x$ in norm.
\end{enumerate} 
\endgroup
 
Clearly, strongly exposed points are denting, and denting points are both extreme points and points of continuity. In fact, denting points are precisely those points which are simultaneously extreme points and points of continuity (see \cite[Exercise 3.146]{FHPPMZ}, for instance). For the case of $A=B_X$, a stronger version of strongly exposed point is the one of LUR point. An element $x\in B_X$ is a \emph{LUR point} if for every $\eps>0$ there is $\delta>0$ such that the implication
    \[
    \norm{\frac{x+y}{2}}>1-\delta \implies \norm{x-y}\leq \varepsilon
    \]
holds for every $y\in B_X$. If every element of $S_X$ is LUR, we say that the space $X$ is \emph{LUR}.

Recall that a Banach space $X$ has the \emph{Radon--Nikod\'ym property} (\textit{RNP} for short) if every closed convex bounded subset of $X$ has a denting point. Also, a Banach space $X$ has the \emph{point of continuity property} (\textit{PCP} for short), respectively, the \emph{convex point of continuity property} (\textit{CPCP} for short), if every closed bounded subset, respectively every closed convex bounded subset, of $A$ has a point of continuity. It follows that in a RNP space $X$, $B_X$ is the closed convex hull of the set of all denting points of $B_X$, and that in spaces $X$ with the CPCP, the set of all points of continuity of $B_X$ is weakly dense in $B_X$, see \cite{ggsm} for more information and background. A related isometric notion is the following: a Banach space has the \emph{Kadec property} if the identity map $\Id\colon (B_X,w)\to (B_X,\norm{\cdot})$ is continuous on the whole $S_X$ (in other worlds, if $S_X\subset \PC(B_X)$), see e.g.\ \cite[Section II.1]{DGZ}. Finite-dimensional Banach spaces trivially satisfy the Kadec property, and also uniformly convex spaces or, more generally, those Banach spaces for which $\dent(B_X)=S_X$, such as LUR spaces. Moreover, asymptotically uniformly convex spaces satisfy a uniform version of the Kadec property (see e.g.\ the remark following \cite[Proposition~2.6]{JLPS}, or the discussion at the end of Section~2 in \cite{ALMP}). Recall that these latter spaces include in particular all $\ell_p$-sums of finite-dimensional spaces, $1\leq p<\infty$.

\begingroup \parskip=0pt
Related to differentiability of norms are the following notions. We say that a point $x$ in a Banach space $X$ is 
\begin{enumerate}
        \item a point of \emph{G\^{a}teaux differentiability} of $X$ if there exists a unique functional $f\in S_{X^*}$ such that $f(x)=\norm{x}$;
        \item a point of \emph{Fr\'{e}chet differentiability} of $X$ if there exists a functional $f\in S_{X^*}$ such that for every sequence $(f_n)$ in $S_{X^*}$, $f_n(x)\to \norm{x}$ if and only if $f_n\to f$ in norm. 
    \end{enumerate}
Recall that a Banach space $X$ is \emph{Asplund} if every continuous and convex function $f$ from a non-empty open subset $U$ of $X$ into $\K$ is Fr\'{e}chet differentiable on a $G_\delta$ subset of $U$. Equivalently, $X$ is Asplund if and only if every separable subspace of $X$ has a separable dual if and only if $X^*$ has the RNP. 
\endgroup

From its various geometric characterizations, the Daugavet property admits several natural localizations to points of the unit sphere of Banach spaces, as we already mentioned in the introduction. Let us present two more notions, and refer to \cite{HLPV23} for more information and background. Let $X$ be a Banach space and $x\in S_X$. We say that $x$ is 
\begingroup \parskip=0pt
\begin{enumerate}
      \item a \emph{$\nabla$-point} if for every slice $S$ of $B_X$ not containing $x$, we have $\sup\nolimits_{y\in S} \norm{x-y}=2$.
      \item a \emph{$\Delta$-point} if for every slice $S$ of $B_X$ containing $x$, we have $\sup\nolimits_{y\in S} \norm{x-y}=2$.
    \end{enumerate}
Observe that a point is a Daugavet point if and only if it is $\nabla$ and $\Delta$ simultaneously. Also, recall that finite-dimensional spaces contain no Daugavet points (see \cite{AALMPPV2}).
\endgroup

Finally, we end the section with a brief recap about spear vectors. Let $X$ be a Banach space.  An element $x\in S_X$ is a \emph{spear vector} (or just \emph{spear}) \cite{Ardalani2014,KMMP} if $\max\limits_{\theta\in\T}\|x+\theta y\|=2$ for every $y\in S_X$; equivalently, if for every $x^*\in\ext(B_{X^*})$, we have $\abs{x^*(x)}=1$, see \cite{KMMP} for more information and background. It is immediate that every spear is an extreme point of $B_X$, while the opposite is not always true. Examples of spears include the elements of the unit vector basis of $\ell_1(\Gamma)$, the extreme points of the unit ball of $\ell_\infty^n$ and $\ell_\infty$, or the constant $1$ function in $C[0,1]$. Observe that, in the real case, two distinct spears are necessarily at distance $2$ from one another (because they have to assume a different value on some extreme point). In the complex case, we have that $\Spear(X)$ is nowhere dense in $S_X$ unless $X$ is one-dimensional \cite[Proposition~2.11]{KMMP}.  From this discussion, the following result follows.

\begin{proposition}[\cite{KMMP}]\label{prop:stricly_convex_spaces_dim>1_have_no_spears}
Let $X$ be a Banach space. If $\Spear(X)=S_X$, then $X$ is one-dimensional.
\end{proposition}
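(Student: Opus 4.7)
The plan is to handle the real and complex cases separately, each by reducing to one of the two facts just recalled before the statement.

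For $\K=\R$, I would argue by contradiction: assume $\Spear(X)=S_X$ and $\dim X\geq 2$. Fix any $x\in S_X$ and choose some $v\in X\setminus\spn\{x\}$; the map $t\mapsto y_t:=(x+tv)/\|x+tv\|$ is then well defined and continuous for $t$ close to $0$, satisfies $y_t\to x$ as $t\to 0$, and $y_t\neq x$ whenever $t\neq 0$ since $v$ is not collinear with $x$. Because $y_t\in S_X=\Spear(X)$, the recalled fact that any two distinct real spears lie at distance exactly $2$ would force $\|x-y_t\|=2$ for all such small $t\neq 0$, contradicting the continuity of the norm at $x$. Hence $\dim X=1$.

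For $\K=\C$, the argument is essentially immediate from \cite[Proposition~2.11]{KMMP}, which asserts that $\Spear(X)$ is nowhere dense in $S_X$ whenever $\dim X\geq 2$. If we had $\Spear(X)=S_X$ while $\dim X\geq 2$, then $S_X$ would be nowhere dense in itself, which is absurd. Thus $X$ must be one-dimensional in this case as well.

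The main obstacle is really concealed in the two preliminary facts rather than in the argument itself. The distance-$2$ property for distinct real spears is an elementary consequence of the definition (two real spears must disagree in sign on some extreme point of $B_{X^*}$), so in the real case the work is minimal. The genuinely nontrivial input is the complex nowhere-density result from \cite{KMMP}, which is used here as a black box; once that is granted, the proposition follows as a one-line topological contradiction as explained above.
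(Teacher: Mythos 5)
Your proof is correct and follows essentially the same route the paper sketches: the paper's preceding discussion records precisely the two inputs you use (distance-$2$ separation of distinct real spears, and nowhere-density of $\Spear(X)$ in the complex case from \cite[Proposition~2.11]{KMMP}), and then asserts the proposition ``follows from this discussion.'' You have simply written out that deduction explicitly, via the continuity/connectedness contradiction in the real case and the trivial topological contradiction in the complex case.
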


An easy convexity argument gives the following remark.

\begin{remark}\label{remark:convexityargument}
Let $X$ be a Banach space, let $x,y\in S_X$, and let $0<\eps<2$. If $\|x+y\|>2-\eps$, then $\|ax+by\|>a+b-\eps$ for every $a,b\in [0,1]$.
\end{remark}

If $X$ and $Y$ are Banach spaces over $\K$, we write $X\oplus_p Y$ to denote the $\ell_p$-sum for $1\leq p\leq \infty$. For a generalization of the above, given a family $\{E_\gamma\colon \gamma\in \Gamma\}$ of Banach spaces,  $\left[\bigoplus_{\gamma\in\Gamma}E_\gamma\right]_{\ell_p}$ represent the $\ell_p$-sum of the family for $1\leq p\leq \infty$ and  $\left[\bigoplus_{\gamma\in\Gamma}E_\gamma\right]_{c_0}$ is the $c_0$-sum of the family.

\section{Super ADP for Banach spaces}\label{section: SAD spaces}
Our goal here is to discuss the first examples and several properties of the super ADP.

Clearly, the Daugavet property implies the super ADP, and the super ADP implies the ADP. Our first result shows that the super ADP lies strictly between the ADP and the Daugavet property. 

\begin{proposition}\label{prop: K has the super ADP}
The one dimensional space $\K$ is the only finite-dimensional Banach space which has the super ADP. Moreover, $\K$ is the only super ADP space with the Kadec property.
\end{proposition}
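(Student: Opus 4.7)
The plan is to show that any super ADP space with the Kadec property must have $\Spear(X)=S_X$, and then to conclude by Proposition~\ref{prop:stricly_convex_spaces_dim>1_have_no_spears}. The first statement will then follow from the second, since every finite-dimensional Banach space satisfies the Kadec property; and checking that $\K$ itself has the super ADP is immediate because for any $x\in S_\K$ and any $y\in S_\K$, choosing $\theta=\overline{xy}\in\T$ gives $|x+\theta y|=2$.

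Assume then that $X$ has the Kadec property and the super ADP, and fix an arbitrary $x\in S_X$. The key is to prove that $x$ is a spear, i.e., that $\max_{\theta\in\T}\|z+\theta x\|=2$ for every $z\in S_X$. Fix such a $z$ and an $\eps>0$. By the Kadec property, $x\in \PC(B_X)$, so one can find a relatively weakly open subset $W$ of $B_X$ with $x\in W$ and $\diam(W)<\eps$. In particular, $W$ intersects $S_X$, so applying the super ADP at the point $z$ with this $W$ yields some $y\in W$ and some $\theta\in\T$ satisfying
\[
\|z+\theta y\|>2-\eps.
\]
Since both $x$ and $y$ lie in $W$, we have $\|x-y\|<\eps$, and therefore
\[
\|z+\theta x\|\geq \|z+\theta y\|-\|\theta y-\theta x\|>2-2\eps.
\]
As $\eps>0$ was arbitrary, we get $\sup_{\theta\in\T}\|z+\theta x\|=2$, and this supremum is in fact a maximum by compactness of $\T$ and continuity of $\theta\mapsto \|z+\theta x\|$.

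Since $z\in S_X$ was arbitrary, $x$ is a spear of $X$; since $x\in S_X$ was arbitrary, $\Spear(X)=S_X$. Proposition~\ref{prop:stricly_convex_spaces_dim>1_have_no_spears} now forces $\dim X=1$, which is the moreover part. For the first assertion, any finite-dimensional Banach space has the Kadec property (its weak and norm topologies coincide on $B_X$), so the same conclusion applies. The main subtlety in the argument is purely in its setup: one must apply the super ADP not at $x$ itself (which would only give information about $\|x+\theta y\|$ for $y$ close to $x$, and so would only yield $|1+\theta|$ close to $2$, something trivially achievable) but at an arbitrary second point $z\in S_X$, using the small diameter provided by the Kadec property at $x$ to transfer the almost-maximum from $y$ to $x$.
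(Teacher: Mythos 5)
Your argument is essentially the paper's proof with the roles of the two points swapped: the paper shows $\max_{\theta\in\T}\|x+\theta y\|=2$ whenever $y\in\PC(B_X)\cap S_X$ by shrinking a weak neighborhood of $y$ and applying the super ADP at $x$, then lets Kadec turn this into $\Spear(X)=S_X$ and invokes Proposition~\ref{prop:stricly_convex_spaces_dim>1_have_no_spears}, exactly as you do. One tiny slip in the trivial $\K$ case: for $\K=\C$ the choice $\theta=\overline{xy}$ gives $|x+\theta y|=|x+\bar{x}|=2|\re x|$, which need not equal $2$; the correct choice is $\theta=x\overline{y}$.
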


\begin{proof}
Clearly, $\K$ has the super ADP. Also, let us observe that if a Banach space $X$ is super ADP, then for every $x\in S_X$ and $y\in \PC(B_X)\cap S_X$, we have that $\max_{\theta\in\T}\norm{x+\theta y}=2$. Indeed, for every $\eps>0$, consider a weak neighborhood $W$ of $y$ of diameter smaller than $\eps$.  Then, we can find $z\in W$ such that $\max_{\theta\in\T}\norm{x+\theta z}>2-\eps$. It follows that $\max_{\theta\in\T}\norm{x+\theta y}>2-2\eps$ for every $\eps>0$, getting the desired result. If $X$ has the Kadec property, this equality does hold for every $y\in S_X$, which means that every $x\in S_X$ is a spear of $X$. By Proposition~\ref{prop:stricly_convex_spaces_dim>1_have_no_spears}, it follows that $X=\K$. 
\end{proof}

Recall that $\ell_1(\Gamma)$ has the Kadec property, since it is asymptotically uniformly convex. It follows that these spaces fail the super ADP, if the set $\Gamma$ has more than one element. Moreover, it is known that these spaces have the ADP, providing infinite-dimensional examples separating these two properties.

\begin{example}\label{examples: ell1gamma not super ADP}
For every set $\Gamma$ with more than one element, the space $\ell_1(\Gamma)$ fails to have the super ADP.
\end{example}

A stronger result than the previous one will be provided in Theorem~\ref{theorem: super ADP fails CPCP}.

We will now show that even in infinite-dimensional Banach spaces, the super ADP is strictly weaker than the Daugavet property. 

\begin{theorem}\label{theorem: renorming having SAD but not the DP}
Let $X$ be a Banach space with the Daugavet property. Then, $X$ can be renormed to simultaneously satisfy the super ADP and fail the Daugavet property.
\end{theorem}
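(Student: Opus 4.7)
The plan is to destroy the Daugavet condition at one specific element of a slightly perturbed unit ball while relying on the super Daugavet property of $(X,\norm{\cdot})$ to preserve super ADP everywhere. Since $X$ has the DP it is infinite-dimensional, so pick $x_0\in S_X$ and, by Hahn--Banach, a functional $x^*\in S_{X^*}$ with $x^*(x_0)=1$. Propose the renorming
\[
\nnorm{x}:=\norm{x}+\abs{x^*(x)},\qquad x\in X,
\]
which is equivalent to $\norm{\cdot}$ via $\norm{x}\leq\nnorm{x}\leq 2\norm{x}$. Write $B'$ and $S'$ for its unit ball and sphere. A direct computation gives $\sup_{B'}\re x^*=1/2=-\inf_{B'}\re x^*$, attained at $\pm x_0/2\in S'$; in particular $\norm{y}\geq 1/2$ and $\abs{x^*(y)}\leq 1/2$ for every $y\in S'$.

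For failure of DP, set $a:=x_0/2\in S'$ and consider the slice $S_\eps:=\{y\in B':\re x^*(y)<-1/2+\eps\}$. For any $y\in S_\eps$, the constraint $\norm{y}+\abs{x^*(y)}\leq 1$ combined with $\re x^*(y)<-1/2+\eps$ forces $\norm{y}<1/2+\eps$, and the bound $\abs{x^*(y)}\leq 1/2$ gives $\abs{1/2+x^*(y)}\leq\sqrt{2\eps}$ by a short geometric argument in the disk. Therefore
\[
\nnorm{a+y}=\norm{a+y}+\abs{x^*(a+y)}<1+\eps+\sqrt{2\eps},
\]
which is uniformly bounded away from $2$ on $S_\eps$ for small $\eps$; hence $a$ is not a Daugavet point of $(X,\nnorm{\cdot})$.

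For super ADP, fix $x\in S'$ and a relatively weakly open $W\subseteq B'$ meeting $S'$; write $W=V\cap B'$ with $V$ weakly open in $X$. Set $\alpha=x^*(x)$, pick $y_0\in W\cap S'$, set $\beta_0=x^*(y_0)$, and choose $\theta_0\in\T$ so that $\abs{\alpha+\theta_0\beta_0}=\abs{\alpha}+\abs{\beta_0}$. Shrink $V$ to $V'\subseteq V$ by adding $x^*$ to its defining family with a small tolerance $\delta>0$. Normalize $\tilde x:=x/\norm{x}$ and $\tilde y_0:=y_0/\norm{y_0}$ in $S_X$ (possible since $\norm{y_0}\geq 1/2$), and construct a weakly open neighborhood $U$ of $\tilde y_0$ in $X$ whose defining tolerances are those of $V'$ divided by $\norm{y_0}$; this choice ensures $u\in U\Rightarrow \norm{y_0}u\in V'$. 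Applying Shvydkoy's characterization of the DP via relatively weakly open subsets of the unit ball (\cite[Lemma~3]{SHVYDKOY00}) to $(X,\norm{\cdot})$ at $\tilde x\in S_X$, together with a rotation of the ambient open set to absorb $\theta_0$, produces $u\in U\cap B_X$ with $\norm{\tilde x+\theta_0 u}>2-\eta$. By Remark~\ref{remark:convexityargument} applied with parameters $\norm{x}$ and $\norm{y_0}$ in place of the constants there, one obtains $\norm{x+\theta_0(\norm{y_0}u)}>\norm{x}+\norm{y_0}-\eta$; combining this with the phase-alignment estimate $\abs{x^*(x+\theta_0(\norm{y_0}u))}\geq\abs{\alpha}+\abs{\beta_0}-O(\delta)$ and rescaling $y:=\norm{y_0}u$ by a factor $1+O(\delta)$ into $B'$ yields an element $y\in W$ with
\[
\nnorm{x+\theta_0 y}\geq(\norm{x}+\abs{\alpha})+(\norm{y_0}+\abs{\beta_0})-\eta-O(\delta)=2-\eta-O(\delta).
\]
Taking $\eta,\delta\to 0$ gives the required equality $\sup_{y\in W,\theta\in\T}\nnorm{x+\theta y}=2$.

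The main technical obstacle is the rescaling bookkeeping: one must verify that after lifting $y_0$ to its unit-norm representative $\tilde y_0=y_0/\norm{y_0}$, applying super DP to obtain $u$, and rescaling back via $y=\norm{y_0}u$, the resulting element (together with the small correction bringing $\nnorm{y}$ below $1$) actually remains in the originally given relatively weakly open set $W$. The crucial quantitative ingredient enabling this is the uniform lower bound $\norm{y_0}\geq 1/2$ on $S'$, which keeps the tolerance-rescaling factor $1/\norm{y_0}\leq 2$ bounded and allows one to choose the auxiliary weakly open neighborhood $U$ around $\tilde y_0$ with tolerances uniformly controlled by those of $V'$, provided the auxiliary parameter $\delta$ is chosen small enough relative to the defining tolerances of $V$.
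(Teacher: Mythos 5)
Your proof is correct, and it takes a genuinely different route from the paper. The paper renorms via the convex hull
\[
B_{(X,|||\cdot|||)} := \conv\bigl(\T x_0\cup (1-\varepsilon)B_{(X,\norm{\cdot})}\bigr),
\]
which makes $x_0$ a denting point; it then verifies the net characterization of super ADP by building an auxiliary functional $h_\alpha$ on a three-dimensional subspace (via the $\ell_1$-type estimate provided by the Daugavet property) and extending it by Hahn--Banach. You instead renorm by the additive perturbation $\nnorm{x}=\norm{x}+|x^*(x)|$, kill the Daugavet property at $x_0/2$ by a direct slice computation (the $|1/2+x^*(y)|\le\sqrt{2\eps}$ estimate on $S_\eps$ is fine, once one uses $|x^*(y)|\le 1/2$ on all of $S'$), and prove super ADP by a rescaling argument that feeds the problem directly into Shvydkoy's weakly-open characterization of the DP for the original norm, with no functional construction. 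The uniform lower bound $\norm{y_0}\ge 1/2$ on $S'$ is precisely what lets the tolerance rescaling $1/\norm{y_0}$ stay bounded, so the final correction by a factor $1+O(\delta)$ keeps the output in $W$ for $\delta$ small relative to the defining tolerances; this is a legitimate and standard bookkeeping step even though you do not write it out in full. One minor point you elide: Shvydkoy's characterization delivers $u\in U\cap B_X$ with $\norm{\tilde x+\theta_0 u}>2-\eta$, but $u$ need not be in $S_X$, while Remark~\ref{remark:convexityargument} as stated requires both vectors on the sphere; however $\norm{u}>1-\eta$ is automatic, and replacing $u$ by $u/\norm{u}$ at the cost of an extra $O(\eta)$ handles this. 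The trade-off between the two approaches: your renorming is more hands-on and avoids the Hahn--Banach extension entirely, at the cost of some rescaling bookkeeping; the paper's version is cleaner in verifying super ADP because the definition of the new unit ball immediately gives convex decompositions of arbitrary sphere points, and the auxiliary functional absorbs all the normalization in one stroke.
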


We provide here the following immediate lemma, which also has a local version, see Lemma~\ref{lemma:SAD_net_characterization}.

\begin{lemma}\label{lemma:global SAD_net_characterization}
Let $X$ be a Banach space. Then, $X$ has the super ADP if and only if given any two elements $x,y\in S_X$ and $\delta>0$, there is a net $(y_{\alpha})$ in $S_X$ which weakly converges to $y$ and satisfies that $\limsup_\alpha\max_{\theta\in\mathbb{T}}\|x+\theta y_{\alpha}\|\geq 2-\delta$. Moreover, it is enough to show this for a subset $C$ of elements $x$'s in the unit sphere such that $\T C$ is dense and for a subset $D$ of elements $y$'s in the unit sphere such that $\T D$ is dense.
\end{lemma}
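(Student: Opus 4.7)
The plan is to establish both implications directly from the definition of the super ADP, and then to derive the ``moreover'' clause by showing that the net condition restricted to $C$ and $D$ already implies the super ADP, which by the main equivalence yields the full net condition.

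For the forward implication, I would fix $x,y\in S_X$ and $\delta>0$, and index a net by the directed set $\mathcal{N}$ of pairs $\alpha=(W,\eta)$ where $W$ is a weak neighborhood of $y$ in $B_X$ and $\eta\in(0,\delta/2)$, ordered so that $(W_1,\eta_1)\leq(W_2,\eta_2)$ when $W_2\subseteq W_1$ and $\eta_2\leq\eta_1$. Since $y\in W\cap S_X$, the super ADP applied to $W$ yields $z_\alpha\in W$ with $\max_{\theta\in\T}\|x+\theta z_\alpha\|>2-\eta$, which in particular forces $\|z_\alpha\|>1-\eta$. The main technical step is to pass to the unit sphere: I would set $y_\alpha:=z_\alpha/\|z_\alpha\|\in S_X$ and observe that $\|y_\alpha-z_\alpha\|=1-\|z_\alpha\|<\eta$ implies both $\max_{\theta\in\T}\|x+\theta y_\alpha\|>2-2\eta>2-\delta$ for every $\alpha$ and $y_\alpha\to y$ weakly. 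The latter follows from $z_\alpha\to y$ weakly (since eventually $z_\alpha\in W$) combined with the uniform norm estimate $\|y_\alpha-z_\alpha\|<\eta$, via $|f(y_\alpha)-f(y)|\leq \|f\|\eta+|f(z_\alpha)-f(y)|$ for every $f\in X^*$.

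For the converse, given $x\in S_X$ and a non-empty relatively weakly open subset $W\subseteq B_X$ with $W\cap S_X\neq\emptyset$, I would pick any $y\in W\cap S_X$ and apply the net condition at $(x,y,\delta)$; since $y_\alpha\to y$ weakly and $W$ is a weak neighborhood of $y$, the net $(y_\alpha)$ is eventually in $W$, which directly gives $\sup_{z\in W}\max_{\theta\in\T}\|x+\theta z\|\geq 2-\delta$ and hence the super ADP after letting $\delta\to 0$.

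Finally, for the ``moreover'' clause I would show that the restricted net condition (only for $x\in C$, $y\in D$) already implies the super ADP. Given $x\in S_X$, $\delta>0$, and a weak neighborhood $W$ of some $y_0\in W\cap S_X$, I would use that any weak neighborhood is in particular norm-open, combined with the norm density of $\T D$ in $S_X$, to pick $\theta_2\in\T$ and $y'\in D$ with $\theta_2 y'\in W\cap S_X$; the density of $\T C$ similarly provides $\theta_1\in\T$ and $x'\in C$ with $\|x-\theta_1 x'\|<\delta/4$. Applying the restricted hypothesis to $(x',y',\delta/2)$ produces a net $(y'_\alpha)\subset S_X$ with $y'_\alpha\to y'$ weakly and $\limsup_\alpha\max_{\theta\in\T}\|x'+\theta y'_\alpha\|\geq 2-\delta/2$. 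Setting $z_\alpha:=\theta_2 y'_\alpha\in S_X$, the net $(z_\alpha)$ converges weakly to $\theta_2 y'\in W$, hence is eventually in $W$, and since $\theta\mapsto\bar\theta_1\theta\theta_2$ is a bijection of $\T$ one gets
\[
\max_{\theta\in\T}\|x+\theta z_\alpha\|\geq \max_{\theta\in\T}\|\theta_1 x'+\theta z_\alpha\|-\tfrac{\delta}{4}=\max_{\theta\in\T}\|x'+\theta y'_\alpha\|-\tfrac{\delta}{4}.
\]
Passing to $\limsup$ and letting $\delta\to 0$ yields the super ADP. The only genuinely delicate point in the whole argument is the normalization step in the first direction, where one must simultaneously preserve the Daugavet-type lower bound and the weak convergence to $y$; everything else is formal.
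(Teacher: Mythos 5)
The paper declares this lemma ``immediate'' and provides no written proof, so there is no reference argument to compare against; your proof is the natural unwinding of the definition and is essentially correct. You correctly isolate the only two points that require care. First, in the forward direction, super ADP produces $z_\alpha\in W$ with $\max_\theta\|x+\theta z_\alpha\|>2-\eta$, which forces $\|z_\alpha\|>1-\eta$; normalizing gives $\|y_\alpha-z_\alpha\|=1-\|z_\alpha\|<\eta$, which preserves both the weak convergence to $y$ and the norm estimate. (One cosmetic point: you should say ``without loss of generality $\delta<2$'' so that $\eta<1$ and $z_\alpha\neq 0$, making the normalization well-defined; for $\delta\geq 2$ the conclusion is vacuous.) Second, for the ``moreover'' clause, you correctly combine the facts that relatively weakly open subsets of $B_X$ are relatively norm open, that $\T C$ and $\T D$ are norm dense in $S_X$, and that $\theta\mapsto\overline{\theta_1}\theta\theta_2$ is a bijection of $\T$ so that the rotated approximants $\theta_1x'$ and $\theta_2y'_\alpha$ do not cost anything in the $\max_\theta$. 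The converse direction is formal, as you note. I find no genuine gap.
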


\begin{proof}[Proof of Theorem~\ref{theorem: renorming having SAD but not the DP}]
Let $(X,\norm{\cdot})$ be a Banach space with the DP, and fix $x_0\in S_{(X,\norm{\cdot})}$ and $\varepsilon\in(0,1)$. We consider an equivalent norm $|||\cdot|||$ on $X$ whose unit ball is defined as
\begin{equation}\label{eq: definition of the new unit ball}
B_{(X,|||\cdot|||)} := \conv\Bigl(\T x_0\cup (1-\varepsilon)B_{(X,\norm{\cdot})}\Bigr).
\end{equation}
Observe that this set is closed, since $\T x_0$ is compact and $(1-\varepsilon)B_{(X,|||\cdot|||)}$ is closed. Let \begin{equation*}
    A:=\T x_0\cup (1-\varepsilon)B_{(X,\norm{\cdot})},
\end{equation*} and observe that, using \eqref{eq: definition of the new unit ball}, we can compute the norm of $f\in (X^*,|||\cdot|||)$ as follows:
\begin{equation}\label{eq: the new dual norm}
|||f||| = \sup_{x\in A}\abs{f(x)} = \max\left\{ \sup_{x\in(1-\varepsilon)B_{(X,\norm{\cdot})}}\abs{f(x)},\, \abs{f(x_0)}\right\} = \max\{(1-\varepsilon)\norm{f}, \abs{f(x_0)}\}.
\end{equation} We claim that the Banach space $(X,|||\cdot|||)$ has the super ADP, but not the DP.

Notice that $x_0$ is clearly a denting point in the unit ball $B_{(X,|||\cdot|||)}$ (see e.g.\ \cite[Lemma~2.1]{BLR16}). In particular, $(X, |||\cdot|||)$ fails the DP. To show that it has the super ADP, it suffices to find, given $x,y\in S_{(X,|||\cdot|||)}$ and $\delta>0$, a net $(y_{\alpha})$ which weakly converges to $y$ and satisfies $\limsup_\alpha\max_{\theta\in\mathbb{T}}|||x+\theta y_{\alpha}|||\geq 2-\delta$, see Lemma~\ref{lemma:global SAD_net_characterization}. Actually, by the moreover part of Lemma~\ref{lemma:global SAD_net_characterization}, we can assume that
\begin{align*} 
    x &= \lambda x_0 +(1-\lambda)(1-\varepsilon) u, \;\; \lambda\in(0,1), \; u\in B_{(X,||\cdot||)}; \\
    y &= \mu \omega x_0 + (1-\mu)(1-\varepsilon)v, \;\; \mu\in(0,1),\; v\in B_{(X,||\cdot||)},\; \omega\in\T.
\end{align*} 
Fix $\eta\in(0,\delta/2)$. Since space $(X,\norm{\cdot})$ has the DP, then using \cite[Lemma 2.8]{kssw} and \cite[Lemma 3]{SHVYDKOY00}, we can find a net $(v_{\alpha})_{\alpha\in I}$ weakly converging to $\omega^{-1} v$ and satisfying
\begin{equation}\label{eq: Daugavet ell_1 estimation}
\norm{k_1x_0+k_2u+k_3v_{\alpha}}\geq (1-\eta)\big(\norm{k_1x_0+k_2u} + \abs{k_3}\big),\;\;\;\;k_1,k_2,k_3\in\mathbb{K},\;\;\alpha\in I.
\end{equation} 
We now take $f\in (X^*,|||\cdot|||)$, $|||f|||=1$, such that $f(x)=1$. By the representation of $x$ above, this implies that $f(x_0)=1$ and $f((1-\varepsilon)u)=1$. For $\alpha\in I$, consider the linear functional $h_{\alpha}\colon \spn\{x_0,u,v_{\alpha}\} \to \mathbb{K}$ defined as
\begin{equation*}
k_1x_0+k_2u + k_3 v_{\alpha} \longmapsto k_1f(x_0)+k_2f(u) + k_3f(u)
\end{equation*}
for $k_1,k_2,k_3\in\mathbb{K}$. 
Using \eqref{eq: Daugavet ell_1 estimation}, we obtain a bound for $||h_{\alpha}||$: for  any $k_1x_0+k_2u+k_3v_{\alpha}$, we have
\begin{align*}
    \abs{h_{\alpha}(k_1x_0+k_2u+k_3v_{\alpha})} &= \abs{f(k_1x_0+k_2u) + k_3f(u)}
    \leq \abs{f(k_1x_0+k_2u)}+ \abs{k_3}\norm{f}\\
    &\leq \norm{f}\norm{k_1x_0+k_2u} + |k_3|\norm{f}\leq \norm{f}\big(\norm{k_1x_0+k_2u}+\abs{k_3}\big)\\
    &\leq\frac{\norm{f}}{1-\eta}\norm{k_1x_0+k_2u+k_3v_{\alpha}}.
\end{align*}
Consequently, $\norm{h_{\alpha}}\leq \norm{f}/(1-\eta)$. Moreover,  \eqref{eq: the new dual norm} yields 
\begin{equation*}
|||h_{\alpha}|||= \max\bigl\{(1-\varepsilon)\norm{h_{\alpha}}, |h(x_0)|\bigr\}\leq \max\Big\{\tfrac{1-\varepsilon}{1-\eta}\norm{f}, 1\Big\}.
\end{equation*}
Since $|||f|||=1$ and $\abs{f(x_0)}=1$, we have $\norm{f}\leq 1/(1-\varepsilon)$, so
\begin{equation*}
|||h_{\alpha}|||\leq \max\Big\{\tfrac{1}{1-\eta},1\Big\}=\frac{1}{1-\eta}.
\end{equation*}
For every $\alpha\in I$, we now extend $h_{\alpha}$, using the Hahn--Banach Theorem, to the space $(X,|||\cdot|||)$. Therefore, we have $h_{\alpha}\in(X^*,|||\cdot|||)$ with $|||h_{\alpha}|||<1/(1-\eta)$. Denote
$y_{\alpha} :=\mu\omega x_0 + (1-\mu)(1-\varepsilon)\omega v_{\alpha}$ for every $\alpha\in I$. Since $(v_{\alpha})_{\alpha\in I}$ converges weakly to $\omega^{-1} v$, the net $(y_{\alpha})_{\alpha\in I}$ converges weakly to $y$. We conclude the proof by showing that, for every $\alpha\in I$, we have 
\begin{equation*}
\max_{\theta\in\mathbb{T}}|||x+\theta y_{\alpha}|||\geq 
|||x+\omega^{-1} y_{\alpha}|||>2-\delta.
\end{equation*}
Indeed, for a fixed $\alpha\in I$, we have
\begin{align*}
 |||x+\omega^{-1}y_{\alpha}|||&\geq \abs{\frac{h_{\alpha}}{|||h_{\alpha}|||}\Big(x+\omega^{-1}y_{\alpha}\Big)}\\
 &\geq (1-\eta)\Big|h_{\alpha}\big(\lambda x_0+(1-\lambda)(1-\varepsilon)u+\omega^{-1}(\mu\omega x_0 + (1-\mu)(1-\varepsilon)\omega v_{\alpha})\big)\Big|\\
 &=(1-\eta)\Big|\lambda f(x_0)+(1-\lambda)(1-\varepsilon)f(u)+\mu f(x_0)+(1-\mu)(1-\varepsilon)f(u)\Big|\\
 &=(1-\eta)\Big|\lambda+(1-\lambda)+\mu +(1-\mu)\Big|=2(1-\eta)=2-2\eta>2-\delta.\qedhere
\end{align*}
\end{proof}

Having established that the super ADP lies strictly between the ADP and the Daugavet property, it is natural to ask what kind of constraints the super ADP imposes on the underlying Banach space. Our first result in this line is that separable Banach spaces with the CPCP cannot have the super ADP.

\begin{proposition}\label{prop:separable_CPCP_not_super_ADP}
Let $X$ be a Banach space of dimension greater than or equal to two satisfying that the set $\PC(B_X)$ is weakly dense in $B_X$ (in particular, if $X$ has the CPCP) and that the set of points of G\^{a}teaux differentiability of the norm of $X$ is not empty (in particular, if $X$ is separable). Then, $X$ does not satisfy the super ADP. 
\end{proposition}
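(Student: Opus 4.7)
The plan is to argue by contradiction. Assume $X$ has the super ADP with $\dim X \geq 2$, fix a point $z \in S_X$ of G\^ateaux differentiability of the norm with unique support functional $f \in S_{X^*}$ (so $f(z)=1$), and show that $f$ must be an isometry, forcing $\dim X = 1$.

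The crucial observation, already extracted inside the proof of Proposition~\ref{prop: K has the super ADP}, is that in any super ADP space, every $p \in \PC(B_X) \cap S_X$ satisfies $\max_{\theta\in\T}\|z+\theta p\| = 2$. First I would verify that this set is non-empty by a normalization argument: any $v \in \PC(B_X) \setminus \{0\}$ (which exists by weak density of $\PC(B_X)$ in $B_X$) yields $v/\|v\| \in \PC(B_X) \cap S_X$, because any net in $B_X$ weakly converging to $v/\|v\|$ scales by $\|v\|$ to a net in $B_X$ weakly converging to $v$, for which norm convergence is automatic. Then, since $\theta \mapsto \|z+\theta p\|$ is continuous on the compact set $\T$, the maximum is attained at some $\theta_0 \in \T$; a Hahn--Banach functional $g \in S_{X^*}$ realizing $g(z + \theta_0 p) = 2$ forces $g(z) = 1$, so $g = f$ by G\^ateaux differentiability at $z$, and $\theta_0 f(p) = 1$. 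Hence $|f(p)| = 1$ for every $p \in \PC(B_X) \cap S_X$, and by the same homogeneity trick, $|f(v)| = \|v\|$ for every $v \in \PC(B_X)$.

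To conclude, I would verify that the set $A := \{v \in X : |f(v)| = \|v\|\}$ is weakly closed: if a net $(v_\alpha)$ in $A$ converges weakly to $v$, then $\|v_\alpha\| = |f(v_\alpha)| \to |f(v)|$ by weak continuity of $f$, while weak lower semicontinuity of the norm gives $\|v\| \leq \lim\|v_\alpha\| = |f(v)|$; combined with $|f(v)| \leq \|v\|$ this yields $v \in A$. Since $A$ contains the weakly dense subset $\PC(B_X)$ of $B_X$, we get $B_X \subset A$, hence $|f(v)| = \|v\|$ for every $v \in X$, which forces $\ker f = \{0\}$ and so $\dim X = 1$, the desired contradiction.

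The main obstacle is the passage from the super ADP (which a priori supplies only nets approximately achieving the value $2$) to the exact equality $\|z + \theta_0 p\| = 2$ at some $\theta_0 \in \T$; this is where the PC property of $p$ combined with compactness of $\T$ is decisive, via the observation already established in Proposition~\ref{prop: K has the super ADP}. After that step, weak density of $\PC(B_X)$ and weak closedness of $A$ do the rest cleanly; the only minor care required is to run the weak-closedness argument with nets rather than sequences, but both weak continuity of $f$ and weak lower semicontinuity of the norm are valid for nets.
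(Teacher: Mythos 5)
Your proof is correct and reaches the conclusion by a genuinely different route than the paper. The paper's proof directly exhibits a relatively weakly open set witnessing the failure of the super ADP: it takes a G\^ateaux point $x$ with support functional $f$, sets $W := \{y \in B_X : |f(y)| < 1/2\}$, picks a point of continuity $y \in W \cap S_X$, shows $\rho := \max_{\theta\in\T}\|x+\theta y\| < 3/2$ by the uniqueness of $f$, and then uses the PC property of $y$ to shrink to a weakly open $W_1 \ni y$ with $\diam W_1 < (2-\rho)/2$, on which $\sup_{z\in W_1}\max_\theta\|x+\theta z\| < 2$. Your argument instead proceeds by contradiction and globalizes: assuming the super ADP, you invoke the same key observation (that $\max_\theta\|x+\theta p\| = 2$ for every $p\in \PC(B_X)\cap S_X$, already established inside the proof of Proposition~\ref{prop: K has the super ADP} and later recorded as Lemma~\ref{prop:SAD_distance_to_PC_points}), deduce via G\^ateaux uniqueness that $|f(p)| = 1$ for every such $p$, and then observe that $\{v : |f(v)| = \|v\|\}$ is weakly closed and contains the weakly dense set $\PC(B_X)$, forcing $f$ to be an isometry and hence $\dim X = 1$. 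Both arguments hinge on the same lemma and on weak density of PC points; the paper's approach is more explicit and constructive (it produces the bad open set), while yours is slightly more structural and avoids choosing ad hoc constants. Your treatment of the normalization of non-zero PC points and of the net-based weak closedness argument is careful and correct.
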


\begin{proof}
Let $x$ be a point of G\^{a}teaux differentiability of the norm. Then there exists a unique $f\in S_{X^*}$ such that $f(x)=1$. Consider the relatively weakly open subset $W:=\{y\in B_X\colon \abs{f(y)}<1/2\}$ of $B_X$. Since $\dim(X)\geq 2$, $W$ must intersect $S_X$. By the hypothesis, $W$ must contain a point of continuity $y$ of $B_X$ that belongs to $S_X$. Let us now prove that $\rho:=\max_{\theta\in\T}\norm{x+\theta y}<2$. Indeed, if 
$\theta\in\T$ satisfies that $\norm{x+\theta y}=2$, we pick $g\in S_{X^*}$ such that $\re g(x+\theta y)=\norm{x+\theta y}=2$, so $\re g(x)=1$ and $g=f$ by uniqueness of the supporting functional. But then, $$\norm{x+\theta y}=|f(x+\theta y)|\leq 1 + |f(y)|<3/2$$
as $y\in W$. Now, we use that $y\in \PC(B_X)$ to find a weakly open subset $W_1$ of $B_X$, such that $y\in W_1$ (so $W_1$ intersects $S_X$) and with $\diam(W_1)<(2-\rho)/2$. 
If $z\in W_1$, we have that
\begin{align*}
\max_{\theta\in\T}\norm{x+\theta z} &\leq \max_{\theta\in\T}\norm{x+\theta y} + \|z-y\| \leq \rho +\diam(W_1) \leq 1+ \frac{\rho}{2}<2.
\end{align*}
Hence, $\sup_{z\in W_1}\max_{\theta\in\T}\norm{x+\theta z}<2$ and so $X$ fails the super ADP.
\end{proof}

In order to get a non-separable version of the above result, we will now prove that the super ADP is separably determined by a.i.\ ideals. Let $Y$ be a Banach space and $X$ be a subspace of $Y$. Recall that $X$ is said to be an \emph{almost isometric ideal} (\textit{a.i.\ ideal} for short) of $Y$ if for every $\eps>0$, and for every finite-dimensional subspace $E$ of $Y$, there exists a bounded linear operator $T\colon E\to X$ satisfying:
\begingroup \parskip=0pt \begin{enumerate}
    \item for every $e\in E\cap X$, $T(e)=e$;
    \item for every $e\in E$, $(1-\eps)\norm{e}\leq \norm{T(e)}\leq (1+\eps)\norm{e}$.
\end{enumerate} \endgroup
Also recall that a bounded linear operator $\varphi\colon X^*\to Y^*$ is called a \emph{Hahn--Banach extension operator} if $\norm{\varphi x^*}=\norm{x^*}$ for every $x^*\in X^*$ and $\varphi x^*(x)=x^*(x)$ for every $(x,x^*)\in X\times X^*$. The following result was proved in \cite{ALN14}.

\begin{lemma}[\mbox{\cite[Theorem~1.4]{ALN14}}]
    Let $Y$ be a Banach space and let $X$ be an a.i.\ ideal of $Y$. Then there exists a Hahn--Banach extension operator $\varphi\colon X^*\to Y^*$ such that for every $\eps>0$, for every finite-dimensional subspace $E$ of $Y$, and for every finite-dimensional subspace $F$ of $X^*$, there exists a bounded linear operator $T\colon E\to X$ satisfying: \begingroup \parskip=0pt \begin{enumerate}
    \item for every $e\in E\cap X$, $T(e)=e$;
    \item for every $e\in E$, $(1-\eps)\norm{e}\leq \norm{T(e)}\leq (1+\eps)\norm{e}$;
    \item for every $(e,f)\in E\times F$, $\varphi f(e)=f(Te)$.
\end{enumerate}
\endgroup
\end{lemma}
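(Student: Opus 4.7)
The plan is to first construct the Hahn--Banach extension operator $\varphi$ from the a.i.\ ideal property via a weak$^*$ compactness (ultrafilter) argument, and then deduce the existence of operators $T$ satisfying all three properties as an application of the principle of local reflexivity (PLR) in $X^{**}$, transferred to $Y$ through the canonical identification provided by $\varphi$.

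For the construction of $\varphi$, I would direct the set $\mathcal{F}$ of pairs $(E,\eta)$, where $E$ is a finite-dimensional subspace of $Y$ and $\eta>0$, by $(E,\eta)\leq (E',\eta')$ iff $E\subseteq E'$ and $\eta'\leq \eta$. For each $(E,\eta)$, pick an operator $T_{E,\eta}\colon E\to X$ witnessing the a.i.\ ideal property for these parameters. For every $x^*\in X^*$, let $\psi_{E,\eta}(x^*)\in Y^*$ be an arbitrary Hahn--Banach extension of $x^*\circ T_{E,\eta}\in E^*$, so that $\|\psi_{E,\eta}(x^*)\|\leq (1+\eta)\|x^*\|$. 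Fix a free ultrafilter $\mathcal{U}$ on $\mathcal{F}$ containing the order filter, and set
\[
\varphi(x^*):=w^*\text{-}\lim_{\mathcal{U}}\psi_{E,\eta}(x^*),
\]
which exists by weak$^*$ compactness of the ball of radius $2\|x^*\|$ in $Y^*$. Linearity of $\varphi$ follows from the linearity of ultralimits (independently of the fact that each $\psi_{E,\eta}$ need not be linear in $x^*$). Since $T_{E,\eta}$ fixes $E\cap X$, one has $\psi_{E,\eta}(x^*)(x)=x^*(x)$ as soon as $x\in E$, which yields $\varphi(x^*)|_X=x^*$ in the limit; in particular, $\|\varphi(x^*)\|\geq \|x^*\|$. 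Combined with the bound $\|\varphi(x^*)\|\leq (1+\eta)\|x^*\|$ valid for all $\eta>0$ along $\mathcal{U}$, this yields $\|\varphi(x^*)\|=\|x^*\|$, so $\varphi$ is a Hahn--Banach extension operator.

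For the construction of $T$, I would pass to the adjoint $\varphi^*\colon Y^{**}\to X^{**}$ and consider the composition $\iota:=\varphi^*\circ J_Y\colon Y\to X^{**}$, where $J_Y$ is the canonical embedding. Because $\varphi$ is an isometric Hahn--Banach extension, $\iota$ is an isometric embedding extending the canonical inclusion $X\hookrightarrow X^{**}$, and the identity $\iota(e)(f)=\varphi(f)(e)$ rewrites condition (3) as $\iota(e)(f)=f(Te)$ for all $(e,f)\in E\times F$. It then suffices to apply PLR to the finite-dimensional subspaces $\iota(E)\subseteq X^{**}$ and $F\subseteq X^*$: for every $\eps>0$, PLR furnishes a linear operator $S\colon \iota(E)\to X$ with $(1-\eps)\|z\|\leq \|Sz\|\leq (1+\eps)\|z\|$ for all $z\in \iota(E)$, $Sz=z$ whenever $z\in \iota(E)\cap X$, and $f(Sz)=z(f)$ for all $z\in \iota(E)$ and $f\in F$. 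Setting $T:=S\circ \iota|_E\colon E\to X$ does the job: (1) is clear since $\iota$ fixes $E\cap X$ and $S$ fixes $\iota(E)\cap X$; (2) follows from $\iota$ being isometric and the bounds on $S$; and (3) follows by unwinding $f(Te)=f(S\iota(e))=\iota(e)(f)=\varphi(f)(e)$.

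The main obstacle, in my view, is the first step: ensuring that a weak$^*$ cluster-point construction produces a genuinely \emph{linear} and \emph{isometric} Hahn--Banach extension operator, rather than only a well-defined set map, despite the non-uniqueness of Hahn--Banach extensions in the finite-dimensional stages. The ultrafilter formalism sidesteps this issue cleanly; alternatively, one can work inside the compact product space $\prod_{x^*\in X^*}(2\|x^*\|\,B_{Y^*})$ equipped with the product of weak$^*$ topologies and extract a cluster point simultaneously at all $x^*$. Once $\varphi$ is in place, the second step is essentially a textbook invocation of PLR.
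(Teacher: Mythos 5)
The paper does not prove this lemma itself; it imports it verbatim from \cite[Theorem~1.4]{ALN14} and uses it as a black box, so there is no in-paper argument to compare against. The first half of your proposal (the ultrafilter construction of $\varphi$) is sound: linearity holds because $\psi_{E,\eta}(x^*+y^*)$ and $\psi_{E,\eta}(x^*)+\psi_{E,\eta}(y^*)$ agree on $E$, and for any fixed $y\in Y$ the set of indices $(E,\eta)$ with $y\in E$ belongs to $\mathcal U$; the extension and isometry properties follow as you indicate.

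The second half contains a genuine, unrepairable error: the claim that $\iota:=\varphi^*J_Y$ is an isometric embedding ``because $\varphi$ is an isometric Hahn--Banach extension'' is false, even when $X$ is an a.i.\ ideal in $Y$. Take $X=c_0$ inside $Y=c_0\oplus_\infty\K$, which is an a.i.\ ideal (for any finite-dimensional $E\subset Y$ and $\eps>0$, map the second coordinate to a far-out unit vector $e_N$, which fixes $E\cap c_0$ and is $(1+\eps)$-isometric since the coordinates of $E\cap c_0$ at position $N$ are negligible). Here $Y^*=\ell_1\oplus_1\K$, and norm preservation forces $\varphi(x^*)=(x^*,0)$ to be the \emph{unique} Hahn--Banach extension operator; consequently $\iota(y,s)=y\in\ell_\infty=c_0^{**}$, so $\iota(0,1)=0$ even though $\|(0,1)\|_Y=1$. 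Running your PLR step with $E=\spn\{(0,1)\}$ then gives $\iota(E)=\{0\}$, hence $S=0$ and $T=0$, which violates condition (2). The deeper point is that $\iota$ being isometric is essentially \emph{equivalent} to the conclusion of the lemma (modulo PLR), so assuming it is circular, and in fact it fails; one cannot first fix $\varphi$ and then recover the operators $T$ through $X^{**}$. A correct proof has to construct $\varphi$ and the almost-isometries $T$ simultaneously in a single compactness argument over triples $(E,F,\eps)$, carrying the operators along to the limit rather than trying to reconstruct them afterwards from $\varphi$ alone.
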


We may now get the stability of the super ADP by a.i.\ ideals.

\begin{proposition}
    Let $Y$ be Banach space with the super ADP and let $X$ be an a.i.\ ideal in $Y$. Then, $X$ also has the super ADP. 
\end{proposition}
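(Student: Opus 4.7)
The plan is to verify the super ADP in $X$ directly from the definition. Given $x, y \in S_X$, a basic weak neighborhood $W = \{u \in B_X : |f_i(u - y)| < \eta,\ 1 \leq i \leq n\}$ of $y$ in $B_X$ with $f_i \in S_{X^*}$ (such $W$ automatically intersects $S_X$ since $y \in W$), and $\delta > 0$, I must find $z \in W$ with $\max_{\theta \in \T} \|x + \theta z\| > 2 - \delta$. The strategy is to first produce an almost-extremal element $\tilde z$ in the ambient space $Y$ via its super ADP, then transport $\tilde z$ into $X$ using the local operator supplied by the preceding lemma.

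Let $\varphi \colon X^* \to Y^*$ be the Hahn--Banach extension operator furnished by the preceding lemma, and lift the neighborhood to $Y$ by setting $\widetilde W := \{u \in B_Y : |\varphi f_i(u - y)| < \eta/2,\ 1 \leq i \leq n\}$, which intersects $S_Y$ because $y \in S_X \subset S_Y$. The super ADP of $Y$ then produces $\tilde z \in \widetilde W$ with $\max_{\theta \in \T} \|x + \theta \tilde z\|_Y > 2 - \delta/2$. For a small parameter $\eps > 0$ (to be fixed later), apply the preceding lemma to $E := \spn\{x, \tilde z\} \subset Y$, $F := \spn\{f_1, \ldots, f_n\} \subset X^*$ and $\eps$ to obtain $T \colon E \to X$ fixing $x$ (since $x \in E \cap X$), satisfying $(1-\eps)\|e\| \leq \|T(e)\| \leq (1+\eps)\|e\|$ on $E$, and satisfying the duality relation $\varphi f(e) = f(Te)$ for all $(e, f) \in E \times F$. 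Set $z' := T(\tilde z) \in X$ and $z := z'/\max(1, \|z'\|) \in B_X$.

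To check that $z \in W$, combine the duality relation with the fact that $\varphi$ extends each $f_i$ on $X$ to obtain $f_i(z' - y) = \varphi f_i(\tilde z - y)$, whose modulus is less than $\eta/2$. Since $\|z - z'\| \leq \bigl|\|z'\| - 1\bigr| \leq \eps$, this gives $|f_i(z - y)| < \eta/2 + \eps < \eta$ as soon as $\eps < \eta/2$. For the rotation-norm estimate, $T(x) = x$ implies $T(x + \theta \tilde z) = x + \theta z'$, so the lower bound on $T$ yields $\|x + \theta z'\|_X \geq (1-\eps) \|x + \theta \tilde z\|_Y$ for every $\theta \in \T$, hence $\max_{\theta \in \T} \|x + \theta z'\|_X > (1-\eps)(2 - \delta/2)$; the normalization contributes an additional additive error of at most $\eps$, and taking $\eps$ small enough relative to $\delta$ yields $\max_{\theta \in \T} \|x + \theta z\|_X > 2 - \delta$, as required.

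I do not foresee any genuine obstacle: the argument is essentially a standard transfer along the a.i.\ ideal structure, and the only mildly delicate point is ordering the quantifiers so that $\eps$ is chosen last, after both $\delta$ and the radius $\eta$ of the weak neighborhood have been fixed, which keeps both the weak-neighborhood error and the normalization error under uniform control.
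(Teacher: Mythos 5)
Your argument is correct and follows essentially the same route as the paper: lift the weak neighborhood to $Y$ via the Hahn--Banach extension operator, apply the super ADP of $Y$ there, transport the resulting almost-extremal element back into $X$ via the local almost-isometry $T$, and normalize. The only (immaterial) differences are that you unify the paper's two cases with the single expression $z'/\max(1,\|z'\|)$ and use $E=\spn\{x,\tilde z\}$ rather than the paper's $E=\spn\{x_0,x,\tilde z\}$, compensating by invoking the extension property $\varphi f_i|_X=f_i$ directly on $y$.
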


\begin{proof}\parskip=0pt 
    Take $x\in S_X$ and $\eps>0$. Then let $x_0\in S_X$ and let $W$ be a neighborhood of $x_0$ in the relative weak topology of $B_X$. Without lost of generality, there exists $n\in\N$, $x_1^*,\dots, x_n^*$ in $X^*\setminus\{0\}$ and $\delta>0$ such that \begin{equation*}
        W:=\bigcap_{i=1}^n\{x\in B_X\colon \abs{x^*_i(x-x_0)}<\delta\}.
    \end{equation*} Let $\varphi\colon X^*\to Y^*$ be the Hahn--Banach extension operator given by the previous lemma, and consider the set 
    \begin{equation*}
        \widetilde{W}:=\bigcap_{i=1}^n\{y\in B_Y\colon \abs{\varphi x^*_i(y-x_0)}<\delta/2\}.
    \end{equation*} 
    Since $x$ is a super AD point in $Y$, there exists $y\in \widetilde{W}$ and $\theta\in\T$ such that $\norm{x+\theta y}>2-\eps$. Let $\eta>0$ to be chosen later. Applying the above with $E:=\spn\{x_0,x,y\}$ and $F:=\spn\{x_1^*,\ldots, x_n^*\}$, we get a bounded linear operator $T\colon E\to X$ satisfying: 
    \begin{enumerate}
        \item $Tx_0=x_0$ and $Tx=x$;
        \item for every $e\in E$ and for every $i\in\{1,\dots,n\}$, $\varphi \bigl(x_i^*(e)\bigr)=x_i^*(Te)$;
        \item for every $e\in E$, $(1-\eta)\norm{e}\leq \norm{Te}\leq (1+\eta)\norm{e}$.
    \end{enumerate} 
    For simplicity, let us now distinguish two cases. First, assume that $\norm{Ty}>1$ and let $z:=\frac{Ty}{\norm{Ty}}$. By assumption, we have that $\norm{z-Ty}=\norm{Ty}-1\leq \eta$. Thus, for every $i\in\{1,\dots,n\}$, we have \begin{equation*}
        \abs{x_i^*(z-x_0)}\leq \norm{z-Ty}\cdot\norm{x_i^*}+\abs{x_i^*(Ty-x_0)} = \norm{z-Ty}\cdot\norm{x_i^*}+\abs{\varphi x_i^*(y-x_0)} < \eta\cdot\norm{x_i^*}+\delta/2.
    \end{equation*} Furthermore, \begin{equation*}
        \norm{x+\theta z}\geq \norm{x+\theta Ty}-\norm{z-Ty}=\norm{Tx+\theta Ty}-\norm{z-Ty}\geq (1-\eta)(2-\eps)-\eta. 
    \end{equation*} So if $\eta$ was initially chosen so that $\eta\cdot\norm{x_i^*}<\delta/2$ for every $i$ and so that $(1-\eta)(2-\eps)-\eta>2-2\eps$, then we would get that $z\in W$ and $\norm{x+\theta z}>2-2\eps$. The conclusion follows. The case $\norm{Ty}\leq 1$ can be dealt with analogously, and with simpler computations, because we can then simply take $z=Ty$.
\end{proof}

The separable determination of the super ADP now immediately follows from \cite[Theorem~1.5]{Abrahamsen15} which assures that given a Banach space and a separable subspace, there is a separable a.i.\ of the space containing the subspace.

\begin{corollary}\label{corollary:separbledetermination}
Let $X$ be an infinite-dimensional Banach space with the super ADP and let $Z$ be a separable subspace of $X$. Then, there is a separable subspace $W$ of $X$ with the super ADP which contains $Z$. 
\end{corollary}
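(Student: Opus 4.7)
The plan is to combine the stability result just proved (super ADP passes to almost isometric ideals) with the separable determination principle for a.i.\ ideals from \cite{Abrahamsen15}. Concretely, the cited \cite[Theorem~1.5]{Abrahamsen15} guarantees that in any Banach space, every separable subspace is contained in some separable almost isometric ideal; so I would invoke it with the ambient space $X$ and the given separable subspace $Z$ to produce a separable subspace $W$ of $X$ with $Z\subseteq W$ and such that $W$ is an a.i.\ ideal in $X$.

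Then I would apply the previous proposition to the pair $(Y,X)=(X,W)$: since $X$ has the super ADP by hypothesis and $W$ is an a.i.\ ideal of $X$, it follows that $W$ itself has the super ADP. This gives a separable subspace $W$ of $X$ containing $Z$ with the super ADP, which is exactly what the corollary claims.

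There is really no obstacle here, because both ingredients are already in place: the heavy lifting has been done in the preceding proposition (the approximation by operators $T\colon E\to X$ coming from the a.i.\ ideal structure, combined with the Hahn--Banach extension operator $\varphi$, transports witnesses of the super ADP from $X$ down to the subspace), and the existence part is exactly what Abrahamsen's theorem provides. The only minor point worth noting is the assumption that $X$ is infinite-dimensional, which is used implicitly so that Abrahamsen's construction yields an infinite-dimensional (and in particular non-degenerate) separable a.i.\ ideal where the super ADP is meaningful; in the proof one simply cites \cite[Theorem~1.5]{Abrahamsen15} and the preceding proposition in sequence.
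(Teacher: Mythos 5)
Your proof is correct and matches the paper's approach exactly: cite Abrahamsen's result to get a separable a.i.\ ideal $W$ of $X$ containing $Z$, then apply the preceding proposition on stability of the super ADP under a.i.\ ideals to conclude $W$ has the super ADP.
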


Since the CPCP clearly passes to subspaces, we immediately get the following.  

\begin{theorem}\label{theorem: super ADP fails CPCP}
Spaces with the CPCP (in particular, spaces with the PCP or with the RNP) of dimension greater than or equal to two, do not have the super ADP. 
\end{theorem}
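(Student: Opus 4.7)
The plan is to combine the separable determination of the super ADP with the result already established in the separable case, namely Proposition~\ref{prop:separable_CPCP_not_super_ADP}. The key observations are that the CPCP passes to closed subspaces and that every separable Banach space admits points of G\^ateaux differentiability of its norm (this is Mazur's theorem: in a separable space, the set of such points is a dense $G_\delta$ subset of $X$), so the two hypotheses of Proposition~\ref{prop:separable_CPCP_not_super_ADP} are automatically satisfied in the separable CPCP setting.

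I first dispose of the finite-dimensional case: if $2\leq \dim X<\infty$, then $X$ has the Kadec property trivially, so Proposition~\ref{prop: K has the super ADP} already gives the conclusion. Hence I may assume that $X$ is an infinite-dimensional Banach space with the CPCP and argue by contradiction: suppose $X$ has the super ADP. Pick any two linearly independent vectors in $X$ and let $Z$ be the separable subspace they span (of dimension $2$). By Corollary~\ref{corollary:separbledetermination} applied to $X$ and $Z$, there exists a separable subspace $W$ of $X$ with the super ADP such that $Z\subset W$; in particular $\dim W\geq 2$.

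Now, $W$ inherits the CPCP from $X$, so $\PC(B_W)$ is weakly dense in $B_W$. Moreover, since $W$ is separable, Mazur's theorem provides a point of G\^ateaux differentiability of the norm of $W$. Thus $W$ satisfies all the hypotheses of Proposition~\ref{prop:separable_CPCP_not_super_ADP}, which forces $W$ to fail the super ADP, contradicting the choice of $W$. The conclusion follows, and the parenthetical cases (PCP and RNP) are consequences of the implications $\text{RNP}\Rightarrow \text{PCP}\Rightarrow \text{CPCP}$.

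The only non-routine ingredients are the separable reduction (already given by Corollary~\ref{corollary:separbledetermination}) and the separable case (already given by Proposition~\ref{prop:separable_CPCP_not_super_ADP}); no further obstacle is anticipated.
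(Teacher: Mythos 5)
Your proposal is correct and follows essentially the same route as the paper: reduce to the separable case via Corollary~\ref{corollary:separbledetermination}, use that the CPCP passes to subspaces, and invoke Proposition~\ref{prop:separable_CPCP_not_super_ADP} (with Mazur's theorem supplying the G\^{a}teaux differentiability hypothesis in the separable setting). You are merely more explicit than the paper about dispatching the finite-dimensional case via the Kadec property and Proposition~\ref{prop: K has the super ADP}.
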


Finally, we will show that spaces with the super ADP fail to be Asplund in a rather strong way. Recall that the norm of a Banach space $X$ is said to be \textit{$\rho$-rough} for $0<\rho\leq 2$ if  
$$
\limsup_{\|h\|\to 0} \frac{\|x+h\|+\|x-h\|-2\|x\|}{\|h\|}\geq \rho
$$ 
for all $x\in X$, see \cite{DGZ} for background. Observe that the roughness of the norm is the extreme opposite to the Fr\'{e}chet differentiability. It is known that the norm of $X$ is $\rho$-rough if and only if every weak$^*$ slice of $B_{X^*}$ has diameter greater than or equal to $\rho$ \cite[Proposition I.1.11]{DGZ}. Observe that spaces admitting a $\rho$-rough norm cannot be Asplund, as this norm is not Fr\'{e}chet differentiable at any point. In particular, the following shows that super ADP spaces fail to be Asplund. 

\begin{theorem}\label{theorem: super ADP is not Asplund}
Let $X$ be a Banach space of dimension greater than or equal to two with the super ADP. Then, for every weak$^*$ slice $S$ of $B_{X^*}$ and every $x^*\in S$, we have 
$\sup\nolimits_{y^*\in S}\norm{x^*-y^*}\geq 1$.
In particular, every slice of $B_{X^*}$ has radius, hence diameter, greater than or equal to one, so the norm of $X$ is $1$-rough. Therefore, $X$ is not an Asplund space. 
\end{theorem}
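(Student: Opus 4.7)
My plan is to reduce the statement to a single application of the super ADP. Fix a weak$^*$ slice $S=S(B_{X^*},x,\alpha)$ with $x\in S_X$ and $\alpha>0$, fix $x^*\in S$, and fix $\varepsilon\in(0,\alpha)$. The goal is to produce $y^*\in S$ with $\|x^*-y^*\|>1-2\varepsilon$; letting $\varepsilon\to 0$ will give the desired lower bound of $1$.

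The key idea is to exploit the functional $x^*$ itself to define a weakly open neighborhood in $X$: set
\[
W:=\{z\in B_X\colon |x^*(z)|<\varepsilon\}.
\]
Since $\dim X\geq 2$, the kernel of $x^*$ has dimension at least one and meets $S_X$, so $W$ intersects $S_X$. Apply the super ADP to the pair $(x,W)$ to obtain $z\in W$ and $\theta\in\T$ with $\|x+\theta z\|>2-\varepsilon$. Pick $y^*\in B_{X^*}$ norming $x+\theta z$, and replace $y^*$ by a modulus-one rotate so that $\re y^*(x+\theta z)>2-\varepsilon$. Since both real parts $\re y^*(x)$ and $\re\bigl(\theta y^*(z)\bigr)$ are bounded by $1$, each must exceed $1-\varepsilon$. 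The first inequality forces $y^*\in S$ because $\varepsilon<\alpha$, while the second gives $|y^*(z)|>1-\varepsilon$. Combined with $|x^*(z)|<\varepsilon$, this yields
\[
\|x^*-y^*\|\geq |(x^*-y^*)(z)|\geq |y^*(z)|-|x^*(z)|>1-2\varepsilon,
\]
as desired.

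The "in particular" conclusions follow immediately: the previous display shows that the radius at any $x^*\in S$ is at least $1$, hence the diameter is at least $1$ as well, and the criterion from \cite[Proposition~I.1.11]{DGZ} then gives $1$-roughness of the norm of $X$, ruling out Asplundness.

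There is no real obstacle in the argument; the only delicate point is ensuring that the weakly open set $W$ meets $S_X$, which is exactly where the hypothesis $\dim X\geq 2$ enters. Apart from that, the proof is a clean one-shot application of the super ADP: the weakly open set is tailored to annihilate $x^*$ on a nontrivial direction, so the functional $y^*$ produced by the super ADP is automatically far from $x^*$ while still lying in the slice $S$.
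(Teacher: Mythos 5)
Your proof is correct and follows essentially the same route as the paper: define $W=\{z\in B_X\colon |x^*(z)|<\varepsilon\}$, use $\dim X\geq 2$ to see $W\cap S_X\neq\emptyset$, apply the super ADP, pick a near-norming functional $y^*$, and observe it lands in $S$ while being far from $x^*$. The only cosmetic difference is that the paper invokes the balancedness of $W$ to absorb the scalar $\theta$ into the chosen point, whereas you carry $\theta$ along explicitly; the content is identical.
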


\begin{proof}
Fix $x\in S_X$ and $\delta>0$ and consider the weak$^*$ slice $S(B_{X^*},x,\delta)$. Let $x^*\in S(B_{X^*},x,\delta)$, pick $\eps\in(0,\delta)$, and let $W:=\{y\in B_X\colon \abs{x^*(y)}<\eps\}$ (observe that $W$ intersects $S_X$ since $\dim(X)>1$). As $X$ has the super ADP and $W$ is balanced (i.e. is such that $\T W=W$), there exists $y\in W$ such that $\norm{x+y}>2-\eps$. Let $y^*\in S_{X^*}$ be such that $\real y^*(x+y)>2-\eps$. On the one hand, we have $\real y^*(x)>1-\eps>1-\delta$, so $y^*\in S(B_{X^*},x,\delta)$. On the other hand, 
\[
\norm{x^*-y^*}\geq \real \scal{x^*-y^*,-y}\geq \real y^*(y)- |x^*(y)|>1-2\eps. \qedhere
\]
\end{proof}

\section{Super alternative Daugavet points and related notions}\label{section: Alternative Daugavet points and related notions}
We devote this section to give the main examples and properties of super AD points, obtaining also results for AD points. This section is divided into two subsections: the first one is devoted to providing examples and comparisions with other localization notions; the second subsection will deal with the implications of super AD points on the geometry of the underlying Banach space. 

Let us first provide a geometric characterization of the AD points and super AD points which we will use all along the section. The following characterizations of AD points follow directly from the proof of the global characterization of the ADP given in  \cite[Proposition~2.1]{MT04}. For every $x\in S_X$ and $\eps>0$, let \begin{equation*}
    \Delta_\eps(x):=\{y\in B_X\colon \norm{x-y}>2-\eps\}. 
\end{equation*}

\begin{proposition}\label{prop: Equivalent characterizations of AD points}
Let $X$ be a Banach space and let $x\in S_X$. Then, the following conditions are equivalent: 
\begingroup \parskip=0pt
\begin{enumerate}
    \item [\rm{(i)}] $x$ is an AD point; 
    \item[\rm{(ii)}] for every $x^*\in X^*$, the operator $T:=x^*\otimes x\colon X\to X$ given by $Tz=x^*(z)x$ for all $z\in X$ satisfies
        \[
        \max_{\theta\in\mathbb{T}}\norm{\Id+\theta T}= 1+\norm{T};
        \]
    \item [\rm{(iii)}] for every $f\in S_{X^*}$ and $\varepsilon>0$, there exists $y\in S_X$ such that $\abs{f(y)}>1-\varepsilon$ and $\norm{x+y}>2-\varepsilon$;
    \item[\rm{(iv)}] for every slice $S$ of $B_X$ and $\eps>0$, there exists $\theta\in\mathbb{T}$ and a subslice $T$ of $S$ such that for every $y\in T$, we have $\norm{x+\theta y}>2-\eps$;
    \item[\rm(v)] for every $\eps>0$, $B_X=\overline{\conv}\big(\mathbb{T}\Delta_{\varepsilon}(x)\big)$.
 \end{enumerate}
 \endgroup
\end{proposition}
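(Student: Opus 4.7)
The plan is to establish the equivalences in the pattern $(i) \Leftrightarrow (iii) \Leftrightarrow (v)$, $(i) \Leftrightarrow (ii)$, and $(i) \Leftrightarrow (iv)$, adapting the proof of the global characterization \cite[Proposition~2.1]{MT04} to the pointwise setting. The first, $(i) \Leftrightarrow (iii)$, is a direct translation: for $(i) \Rightarrow (iii)$, apply the AD-point property to the slice $S(B_X, f, \eps)$ to obtain $y \in S$ and $\theta \in \T$ with $\norm{x + \theta y} > 2 - \eps$, and note that the rotated unit vector $\theta y$ satisfies $\abs{f(\theta y)} = \abs{f(y)} > 1 - \eps$ and $\norm{x + \theta y} > 2 - \eps$. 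For $(iii) \Rightarrow (i)$ and a slice $S(B_X, f, \alpha)$ with $\norm{f} = 1$, fix $\eps < \alpha$, apply $(iii)$ to produce $y \in S_X$, and pick $\theta \in \T$ with $\theta f(y) = \abs{f(y)}$ so that $\theta y \in S$ and $\norm{x + \bar\theta(\theta y)} = \norm{x + y} > 2 - \eps$.

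The equivalence $(iii) \Leftrightarrow (v)$ will be a Hahn--Banach dualization: since $\T\Delta_\eps(x)$ is balanced, the condition $B_X = \cconv(\T\Delta_\eps(x))$ is equivalent to $\sup\{\abs{f(z)} : z \in \Delta_\eps(x)\} \geq 1$ for every $f \in S_{X^*}$; substituting $z \mapsto -z$ rewrites this as the existence, for every $\eta > 0$, of $z' \in B_X$ with $\abs{f(z')} > 1 - \eta$ and $\norm{x + z'} > 2 - \eps$, and normalizing $z'$ onto $S_X$ recovers $(iii)$ (at the cost of an arbitrarily small loss in $\eps$). For $(i) \Leftrightarrow (ii)$, the formula $\max_{\theta \in \T} \norm{\Id + \theta T} = \sup_{\theta \in \T,\, z \in B_X} \norm{z + \theta x^*(z) x}$ for $T = x^* \otimes x$ (and, up to scaling, $\norm{x^*} = 1$) reduces $(ii)$ to finding $z_n \in B_X$ and $\theta_n \in \T$ with $\norm{z_n + \theta_n x^*(z_n) x} \to 2$. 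This forces $\norm{z_n} \to 1$ and $\abs{x^*(z_n)} \to 1$, and after normalizing $z_n$ and absorbing the phase of $x^*(z_n)$ into a single unimodular factor one obtains $v_n \in S_X$ with $\abs{x^*(v_n)} \to 1$ and $\norm{x + v_n} \to 2$, namely $(iii)$ with $f = x^*$. For the converse, starting from $y \in S_X$ furnished by $(iii)$ applied to $f = x^*$, one chooses $\theta \in \T$ so that $\theta x^*(y)$ is positive real, giving $\norm{y + \theta x^*(y) x} \geq \norm{y + x} - (1 - \abs{x^*(y)}) > 2 - 2\eps$.

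Finally, for $(i) \Leftrightarrow (iv)$, the direction $(iv) \Rightarrow (i)$ is immediate. For $(i) \Rightarrow (iv)$, given $S = S(B_X, x^*, \alpha)$ and $\eps > 0$, apply $(i)$ to find $y \in S$ and $\theta \in \T$ with $\norm{x + \theta y} > 2 - \eps/2$, pick $y^* \in S_{X^*}$ satisfying $\real y^*(x + \theta y) > 2 - \eps/2$ (forcing both $\real y^*(x) > 1 - \eps/2$ and $\real(\theta y^*(y)) > 1 - \eps/2$), and define
\[
T := S \cap \{z \in B_X : \real(\theta y^*(z)) > 1 - \eps/2 \},
\]
a non-empty subslice of $S$ containing $y$ on which every $z$ satisfies $\norm{x + \theta z} \geq \real y^*(x) + \real(\theta y^*(z)) > 2 - \eps$, uniformly in $z$. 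I expect the main obstacle to be precisely this last step: the other equivalences amount to rotations, normalizations, and a single Hahn--Banach application, but $(iv)$ demands a uniform estimate on a whole subslice with a single phase $\theta$, and carving out $T$ via the supporting functional $y^*$ of $x + \theta y$ is the standard but slightly delicate device that makes the uniformity survive.
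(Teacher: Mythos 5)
Most of your argument is sound: the equivalences $(i)\Leftrightarrow(iii)\Leftrightarrow(v)$ via Hahn--Banach separation for the balanced set $\T\Delta_\varepsilon(x)$, and $(i)\Leftrightarrow(ii)$ via the identification $\max_{\theta}\norm{\Id+\theta T}=\sup_{\theta,z}\norm{z+\theta x^*(z)x}$ and the usual normalization, are exactly the translations of \cite[Proposition~2.1]{MT04} that the paper has in mind. (One cosmetic point: in $(i)\Rightarrow(iii)$ the element $y\in S(B_X,f,\varepsilon)$ produced by the AD property need not lie in $S_X$; you must normalize and absorb the small loss into $\varepsilon$, exactly as you do elsewhere.)

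The genuine gap is in $(i)\Rightarrow(iv)$, precisely where you anticipated difficulty. The set $T:=S\cap\{z\in B_X:\real(\theta y^*(z))>1-\varepsilon/2\}$ is the intersection of two slices of $B_X$, and such an intersection is in general \emph{not} a slice. For instance, in $\ell_1^2$ the intersection of the slices $\{z_1>0\}$ and $\{z_2>0\}$ of $B_{\ell_1^2}$ contains no slice at all, since every slice of $B_{\ell_1^2}$ clusters around one of the four extreme points $(\pm1,0),(0,\pm1)$ and these lie outside the open intersection. So calling $T$ a ``subslice of $S$'' is unjustified, and condition $(iv)$ (which requires a genuine slice contained in $S$) does not follow from the argument as written. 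The standard repair, used in \cite[Lemma~2.2]{kssw} and in \cite{MT04}, is to produce a single slice via a \emph{combined} functional: write $S=S(B_X,f,\alpha)$ with $\norm{f}=1$, apply the AD property with a small $\varepsilon'$ to obtain $\omega\in\T$ and $y_0\in S_X$ with $\real f(\omega y_0)>1-\varepsilon'$ and $\norm{x+y_0}>2-\varepsilon'$, pick $g\in S_{X^*}$ almost supporting $x+y_0$, set $h:=f+\overline{\omega}g$ and $T:=S(B_X,h,\beta)$. Since $\real h(\omega y_0)>2(1-\varepsilon')$ while $\sup_{B_X}\real h\leq 2$, decomposing $\real h(z)=\real f(z)+\real\overline{\omega}g(z)$ for $z\in T$ shows that both terms are close to $1$, forcing $T\subset S$ and $\norm{x+\overline{\omega}z}\geq\real g(x)+\real\overline{\omega}g(z)>2-\varepsilon$, provided $\varepsilon'$ and $\beta$ are taken small enough depending on $\alpha$ and $\varepsilon$. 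With this substitution your proof becomes complete.
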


For a super AD point $x$, condition (v) above is replaced with the weak denseness of the set $\mathbb{T}\Delta_{\varepsilon}(x)$ (instead of the denseness of its convex hull). Equivalently, we can provide a net characterization of super AD points which will be useful.

\begin{lemma}\label{lemma:SAD_net_characterization}
Let $X$ be a Banach space and let $x\in S_X$. The element $x$ is a super AD point if and only if for every $y\in S_X$, there exists $\theta\in \T$ and a net $(y_\alpha)$ in $B_X$ weakly convergent to $y$ such that $\norm{x+\theta y_\alpha}\to 2$. Moreover, if $\dim(X)=\infty$, then such a net can be found for every $y\in B_X$ and, additionally, can be taken in $S_X$. 
\end{lemma}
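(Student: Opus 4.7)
The plan is to prove both directions together with the ``moreover'' clause by a standard net construction indexed by (weak neighborhood, accuracy)-pairs, together with a compactness argument on $\mathbb{T}$.

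The ``if'' direction is routine: given a relatively weakly open $W\subset B_X$ with $W\cap S_X\neq\emptyset$, pick any $y\in W\cap S_X$ and invoke the hypothesis to produce $\theta\in\mathbb{T}$ and a net $(y_\alpha)$ weakly converging to $y$ with $\|x+\theta y_\alpha\|\to 2$; since $y_\alpha\to y$ weakly, $y_\alpha\in W$ eventually, so $\sup_{z\in W}\max_{\theta\in\mathbb{T}}\|x+\theta z\|=2$.

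For the ``only if'' direction with $y\in S_X$, I would index a net by the directed set of pairs $(W,n)$, where $W$ ranges over the basic relative weak neighborhoods of $y$ in $B_X$ (ordered by reverse inclusion) and $n\in\mathbb{N}$. Each such $W$ satisfies $y\in W\cap S_X$, hence by the super AD property of $x$ there exist $z_{W,n}\in W$ and $\theta_{W,n}\in\mathbb{T}$ with $\|x+\theta_{W,n} z_{W,n}\|>2-\tfrac{1}{n}$. Then $(z_{W,n})$ converges weakly to $y$. Since $\mathbb{T}$ is compact, I extract a subnet along which $\theta_{W,n}\to\theta$ for some $\theta\in\mathbb{T}$; using $\|z_{W,n}\|\leq 1$, the triangle inequality gives
\[
\bigl|\,\|x+\theta z_{W,n}\|-\|x+\theta_{W,n} z_{W,n}\|\,\bigr|\leq |\theta-\theta_{W,n}|\longrightarrow 0,
\]
and therefore $\|x+\theta z_{W,n}\|\to 2$ along this subnet.

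For the moreover part, I would fix $y\in B_X$ and a basic relative weak neighborhood $W=\{z\in B_X\colon |f_i(z-y)|<\delta,\ i=1,\ldots,k\}$ of $y$, set $M:=\max_i\|f_i\|$, and consider the shrunken neighborhood $W':=\{z\in B_X\colon |f_i(z-y)|<\delta/2,\ i=1,\ldots,k\}$. Since $\dim(X)=\infty$, the sphere $S_X$ is weakly dense in $B_X$, so $W'\cap S_X\neq\emptyset$ and the super AD property applies to $W'$. Given $\eta>0$, this yields $z\in W'$ and $\theta\in\mathbb{T}$ with $\|x+\theta z\|>2-\eta$, which forces $\|z\|>1-\eta$. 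Normalizing to $\widetilde z:=z/\|z\|\in S_X$ gives $\|\widetilde z-z\|=1-\|z\|<\eta$, so for $\eta<\min\{\delta/(2M),\,1/n\}$ one obtains $\widetilde z\in W\cap S_X$ and $\|x+\theta\widetilde z\|>2-2\eta$. Repeating the net-and-subnet argument from the previous paragraph, but now with witnesses $\widetilde z_{W,n}\in S_X$, produces the desired net.

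The main obstacle is this last step, which requires simultaneously (i) infinite-dimensionality to ensure that the shrunken neighborhood meets $S_X$ so that super AD can be applied, and (ii) a norm-normalization trick that keeps the witness both inside the original $W$ and near-optimal for $\|x+\theta\cdot\|$. Replacing the varying $\theta_\alpha$ by a single $\theta$ is then just a routine appeal to the compactness of $\mathbb{T}$.
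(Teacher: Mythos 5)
Your proof is correct. The paper states Lemma~\ref{lemma:SAD_net_characterization} without proof, treating it as immediate from the definition of super AD points; your argument (indexing a net by weak neighborhoods of $y$, extracting a subnet by compactness of $\mathbb{T}$ to freeze the rotation, and, in the infinite-dimensional case, using weak density of $S_X$ in $B_X$ followed by a renormalization estimate) is exactly the routine construction the authors have in mind, with all the necessary estimates filled in correctly.
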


\subsection{Examples and comparison with other localization notions}\label{subsection:examplesandcomparision}
Our first goal is to discuss the relation between spears and super AD points. It is immediate that spear vectors are super AD points. The converse holds for spaces with the Kadec property (as it follows from the proof of Proposition~\ref{prop: K has the super ADP}).

\begin{proposition}\label{prop: Kadec property SAD implies spear}
Let $X$ be a Banach space and let $x\in S_X$. If $x\in \Spear(X)$, then $x$ is a super AD point. Moreover, if $X$ has the Kadec property (in particular, if $X$ is finite-dimensional or $X$ is asymptotically uniformly convex), then the set of super AD points coincides with $\Spear(X)$.
\end{proposition}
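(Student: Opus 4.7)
The first implication is almost immediate from the definitions. Assuming $x \in \Spear(X)$, for every non-empty relatively weakly open subset $W$ of $B_X$ that intersects $S_X$, I would simply pick any $y \in W \cap S_X$ and observe that by the spear property, $\max_{\theta\in\T}\|x+\theta y\|=2$. This immediately gives $\sup_{y\in W}\max_{\theta\in\T}\|x+\theta y\|=2$, so $x$ is a super AD point.

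For the converse under the Kadec assumption, I want to mimic the argument in the proof of Proposition~\ref{prop: K has the super ADP}. Fix a super AD point $x$ and an arbitrary $y\in S_X$. Since $S_X\subset \PC(B_X)$ by the Kadec property, for every $\eps>0$ there exists a relatively weakly open neighborhood $W$ of $y$ in $B_X$ with $\diam(W)<\eps$. Note $W$ trivially intersects $S_X$ (it contains $y$). Applying the super AD property of $x$ to $W$, I obtain $z\in W$ and $\theta\in\T$ with $\|x+\theta z\|>2-\eps$, and then
\[
\max_{\vartheta\in\T}\|x+\vartheta y\| \geq \|x+\theta y\| \geq \|x+\theta z\| - \|z-y\| > 2-2\eps.
\]
Letting $\eps\to 0$ yields $\max_{\theta\in\T}\|x+\theta y\|=2$, so $x$ is a spear vector.

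The only other thing to address is the parenthetical ``in particular'' clause. Finite-dimensional spaces trivially have the Kadec property (weak and norm topologies coincide on bounded sets), and asymptotically uniformly convex spaces satisfy a uniform Kadec property, as recalled in the preliminaries. Both cases therefore fall under the Kadec hypothesis, and no additional argument is needed.

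There is no real obstacle here: the proposition is essentially a pointwise version of Proposition~\ref{prop: K has the super ADP}, and the argument is the same diameter-shrinking neighborhood trick combined with the definition of a spear. The only small point to be mindful of is justifying that the maximum over $\T$ is attained (so that we can replace $\sup$ with $\max$ at the end), which follows from compactness of $\T$ and continuity of $\theta\mapsto\|x+\theta y\|$.
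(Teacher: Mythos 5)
Your proof is correct and takes essentially the same approach as the paper: the paper does not write out a separate proof but explicitly notes that the Kadec-property direction ``follows from the proof of Proposition~\ref{prop: K has the super ADP}'', and your diameter-shrinking argument with a small weakly open neighborhood of $y$ is precisely that argument, localized to a single super AD point.
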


This result easily allows us to determine the super AD points in some concrete spaces, by using the description of the set of spear vectors given in \cite[Example 2.12]{KMMP}. Recall that the space $\ell_1(\Gamma)$ is asymptotically uniformly convex, and hence has the Kadec property for every nonempty set $\Gamma$.

\begin{example}\label{example:ellinftym-ell1Gamma} We give the description of the super AD points in some spaces:
\begingroup \parskip=0pt
\begin{enumerate}
  \item The super AD points of $\ell_\infty^m$ ($m\in \N$) are the elements whose coordinates have all modulus one.
  \item The super AD points of $\ell_1(\Gamma)$ (in particular, of $\ell_1^m$ ($m\in \N$) and of $\ell_1$) are the elements whose coordinates are zero except for one which has modulus one (that is, all  rotations of the elements of the canonical basis).
\end{enumerate} 
\endgroup 
\end{example}

The following example shows that the ADP does not necessarily imply the existence of any super AD point. 

\begin{example}\label{example: c_0 AD but no SAD points}
The space $c_0$ has the ADP \cite{MT04}, and hence all points on its unit sphere are AD points. However, there are no super AD points in $c_0$.  
\begin{proof}
Indeed, fix $x_0=(x_n)_{n\in\mathbb{N}} \in S_{c_0}$ and find an index $N\in\mathbb{N}$ such that
$\abs{x_n}<1/4$ whenever $n\geq N$. Consider the relatively weakly open set
\[
W:= \{ (y_n)\in B_{c_0}\colon \abs{y_n}<1/4, \; n=1,2,\dots,N\}
\]
and notice that for every $\theta \in\mathbb{T}$ and every element $y\in W$, we have
\begin{align*}
\norm{x_0+\theta y} = \sup_n \abs{x_n+\theta y_n} \leq \sup_n \abs{x_n}+\abs{y_n} < 1+ \frac{1}{4}< 2.
\end{align*}
(This is true, since if $n<N$, we have $\abs{y_n}<1/4$, and if $n\geq N$, then $\abs{x_n}< 1/4$.) Hence $x$ cannot be super AD.
\end{proof}
\end{example}

We next would like to clarify the relations between AD and super AD points and the various localizations of the DP defined previously in the literature. 

Clearly, every super Daugavet point is a super AD point, and every Daugavet point is an AD point. Let us note that the latter implication actually holds for $\nabla$-points.

\begin{proposition} 
Let $X$ be a Banach space. If $x\in S_X$ is a $\nabla$-point, then $x$ is an AD point.
\begin{proof}
    First, assume $X$ is a real Banach space. Consider a slice $S$ of $B_X$ and $\varepsilon>0$. We want to find $y\in S$ such that $\max\{\norm{x-y}, \norm{x+y}\}>2-\varepsilon$. There are two cases: if $x$ belongs to $S$, we can take $y:=x$. In the other case, we can find the desired $y\in S$ simply by the fact that $x$ is a $\nabla$-point. If $X$ is a complex Banach space, then every $\nabla$-point is already a Daugavet point \cite[Proposition~2.2]{LRT2024}, hence an AD point.
\end{proof}
\end{proposition}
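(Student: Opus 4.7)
The plan is to observe that $-1$ is a modulus-one scalar in both the real and the complex case, so that taking $\theta=-1$ we always have the trivial lower bound
\[
\max_{\theta\in\T}\|x+\theta y\|\geq \|x-y\|
\]
for every $y\in B_X$. With this in mind, the $\nabla$-point hypothesis (which controls $\|x-y\|$ on slices) should translate directly into the AD condition, with the only issue being that the $\nabla$-point definition excludes slices containing $x$, while the AD-point definition does not.

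Concretely, fix a slice $S$ of $B_X$ and $\varepsilon>0$; I would split into two cases. If $x\in S$, simply take $y:=x\in S$ and $\theta:=1$, which gives $\|x+\theta y\|=2$ and settles this case trivially. If $x\notin S$, the $\nabla$-point property is available and provides some $y\in S$ with $\|x-y\|>2-\varepsilon$; combining with the trivial bound above and $-1\in\T$ yields $\max_{\theta\in\T}\|x+\theta y\|>2-\varepsilon$. Since the reverse inequality $\sup_{y\in S}\max_{\theta\in\T}\|x+\theta y\|\leq 2$ is automatic, we conclude that $x$ is an AD point using Proposition~\ref{prop: Equivalent characterizations of AD points}.

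Note that this argument works uniformly in the real and complex settings, since only $-1\in\T$ is used. One could alternatively treat the complex case by invoking the known fact from \cite{LRT2024} that a complex $\nabla$-point is already a full Daugavet point, and then use Daugavet~$\Rightarrow$~AD; this yields a strictly stronger statement but is not needed for the proposition as stated. There is no real obstacle here: the entire content of the lemma is the observation that in the AD condition one can select the rotation $\theta=-1$ in advance, after which the $x\in S$ versus $x\notin S$ dichotomy reduces matters to a direct application of the defining property of a $\nabla$-point.
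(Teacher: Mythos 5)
Your proof is correct, and for the real case it is essentially the same case-split argument the paper uses: handle $x\in S$ trivially with $y=x$, and otherwise invoke the $\nabla$-point property via the observation that $-1\in\T$ gives $\max_{\theta\in\T}\|x+\theta y\|\geq\|x-y\|$. The genuine difference is in the complex case. You correctly notice that the same two-case argument works verbatim over $\C$, since $-1\in\T$ regardless of the scalar field and nothing in the $\nabla$-point definition is field-specific; this yields a single unified, self-contained proof. The paper instead treats the complex case separately by citing \cite[Proposition~2.2]{LRT2024}, which asserts that complex $\nabla$-points are already Daugavet points, and then uses Daugavet~$\Rightarrow$~AD. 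The paper's route is shorter to write but appeals to a stronger external result; your route is more elementary and avoids the citation. Both are valid, and your observation that the split into real and complex is unnecessary is a genuine (if small) simplification.
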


An overview of the relations of both new and already known notions is given in Figure \ref{fig:relations between notions}. Examples in \cite{HLPV23,MPRZ} as well as the fact that the super ADP lies strictly between the ADP and the Daugavet property (Example~\ref{examples: ell1gamma not super ADP} and Theorem~\ref{theorem: renorming having SAD but not the DP}), show that none of the above implication reverses and that super AD points are not necessarily Daugavet points and vice versa. Besides, we will provide an example showing that super AD points are not necessarily $\nabla$-points (Example~\ref{example:superAD-no-nabla}).
\begin{figure}[h!]
     \centering
\begin{tikzpicture}[
    node distance=1.5cm and 2.5cm,
    every node/.style={draw, rectangle, minimum width=1.5cm, minimum height=0.6cm, align=center},
    smallbox/.style={draw, rectangle, minimum width=0.6cm, minimum height=0.6cm, align=center},
    implies/.style={draw, double, -implies, double distance=1.5pt, shorten >=2pt, shorten <=2pt}
]

\node (ccsDaugavet) {ccs Daugavet};
\node[below=1.5cm of ccsDaugavet] (superDaugavet) {super Daugavet};
\node[below=1.5cm of superDaugavet] (Daugavet) {Daugavet};

\node[right=2.5cm of superDaugavet] (superAD) {super AD};
\node[below=1.5cm of superAD, smallbox] (AD) {AD};

\node[below=1.5cm of Daugavet, xshift=2.4cm, smallbox] (nablaPoint) {\(\nabla\)};

\draw[implies] (ccsDaugavet.south) -- (superDaugavet.north);
\draw[implies] (superDaugavet.south) -- (Daugavet.north);

\draw[implies] (superAD.south) -- (AD.north);

\draw[implies] (superDaugavet.east) -- (superAD.west);
\draw[implies] (Daugavet.east) -- (AD.west);

\draw[implies] (Daugavet.south) -- (nablaPoint.north);
\draw[implies] (nablaPoint.north) -- (AD.south);
\end{tikzpicture}
    \caption{Relations between the notions}
    \label{fig:relations between notions}
\end{figure}
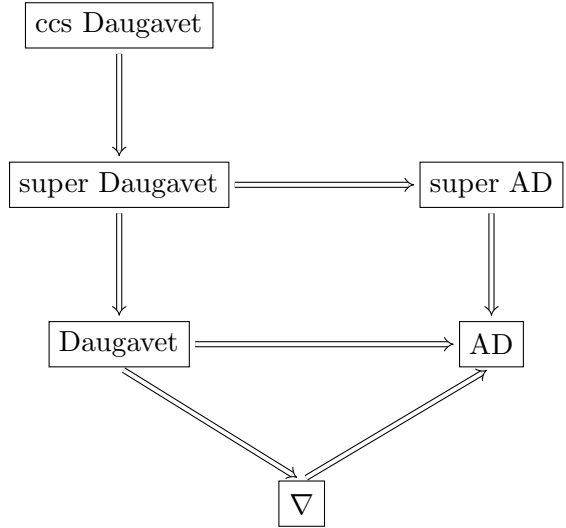

Our next aim is to illustrate the difference between super AD and super Daugavet points, showing that the later are super AD for all directions at the same time. 

\begin{proposition}\label{prop:superDpoint-is-superADPineverydirection}
Let $x\in S_X$ be a super Daugavet point. Then, for every $\eps>0$ and every non-empty relatively weakly open subset $W$ of $B_X$, there exists $y\in W$ such that $\norm{x+\theta y}>2-\eps$ for every $\theta\in\T$. 
\begin{proof}
From the definition of super Daugavet points, it is immediate to show that for every $\theta\in\T$, there exists $y_\theta\in W$ such that $\norm{x+\theta y_\theta}>2-\eps$ (it is enough to use that  $\overline{\theta} x$ is super Daugavet). Let $\{\theta_i\colon i=1,\ldots,n\}$ be an $\eps$-net in $\T$. Since the sets of the form 
\begin{equation*}
   \Delta_{\eps,\theta}(x):=\{y\in B_X\colon \norm{x+\theta y}>2-\eps\}
\end{equation*}
are relatively weakly open, by the argument above, we get that the set $W_1:=\Delta_{\eps,\theta_1}(x)\cap W$ is a non-empty relatively weakly open subset of $B_X$. Hence the set $W_2:=\Delta_{\eps,\theta_2}(x)\cap W_1$ is also a non-empty relatively weakly open subset, and iterating in this way, we get that the set $W\cap\bigcap_{i=1}^n\Delta_{\eps,\theta_i}(x)$ is non-empty. Now, every $y$ in this set satisfies $\norm{x+\theta_i y}>2-\eps$ for every $i\in\{1,\dots,n\}$, hence $\norm{x+\theta y}>2-2\eps$ for every $\theta\in \T$. The conclusion follows.   
\end{proof}
\end{proposition}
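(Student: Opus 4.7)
The plan is to exploit the compactness of $\T$ to upgrade the one-rotation content of the super Daugavet property into a statement that is uniform in $\theta$. The super Daugavet property easily yields the conclusion for any single rotation $\theta$, and the key observation that makes the uniform version go through is that, for each fixed $\theta$, the sets
\[
\Delta_{\eps,\theta}(x) := \{y\in B_X \colon \norm{x+\theta y}>2-\eps\}
\]
are relatively weakly open in $B_X$ (by weak lower semicontinuity of the norm). This lets me intersect finitely many of them with $W$ and still keep a non-empty weakly open set.

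First, I would record the following one-rotation version: for every $\theta\in\T$ and every non-empty relatively weakly open $V\subset B_X$, there is $y\in V$ with $\norm{x+\theta y}>2-\eps$. This is immediate by applying the super Daugavet property of $x$ to the weakly open set $\theta V$ (scalar multiplication by $\theta$ is a weak homeomorphism of $B_X$), and then writing the resulting element as $\theta y$.

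Second, using that $\T$ is compact, I would fix a finite $\eta$-net $\{\theta_1,\ldots,\theta_n\}\subset\T$ with $\eta>0$ to be chosen later, put $W_0:=W$, and define recursively
\[
W_i := W_{i-1}\cap \Delta_{\eps/2,\theta_i}(x).
\]
At each stage $W_{i-1}$ is a non-empty relatively weakly open subset of $B_X$, so the one-rotation version produces an element of $W_{i-1}$ lying in $\Delta_{\eps/2,\theta_i}(x)$; combined with weak openness of $\Delta_{\eps/2,\theta_i}(x)$, this shows $W_i$ is itself non-empty and relatively weakly open. After $n$ steps, $W_n$ is a non-empty relatively weakly open subset of $W$ on which $\norm{x+\theta_i y}>2-\eps/2$ holds simultaneously for every $i$.

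Finally, for any $y\in W_n$ and any $\theta\in\T$, choosing $\theta_i$ with $\abs{\theta-\theta_i}<\eta$ and using the triangle inequality gives $\norm{x+\theta y}\geq \norm{x+\theta_i y}-\eta\geq 2-\eps/2-\eta$, which exceeds $2-\eps$ as soon as $\eta<\eps/2$. The main subtlety is really just the bookkeeping between the granularity $\eta$ of the net in $\T$ and the slack in the Daugavet estimate, which is absorbed by starting with $\eps/2$ instead of $\eps$; there is no deeper obstacle, since compactness of $\T$ and weak openness of the $\Delta$-sets do all the work.
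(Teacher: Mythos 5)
Your proof is correct and follows essentially the same approach as the paper: establish the single-rotation version from the super Daugavet property, use weak openness of the sets $\Delta_{\eps,\theta}(x)$ to intersect finitely many of them with $W$ over an $\eps$-net of $\T$, and conclude by a triangle-inequality estimate. The only differences are cosmetic: you phrase the single-rotation step via translating $W$ by $\theta$ rather than replacing $x$ by $\overline{\theta}x$, and you are a bit more careful in the $\eps$-bookkeeping (starting from $\eps/2$ so as to land precisely on $2-\eps$, where the paper lands on $2-2\eps$ and leaves the renaming implicit).
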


We can establish a similar statement for Daugavet points which shows the difference with AD points.

\begin{proposition}\label{prop: Daugavet point makes all rotations far for uniform element}
Let $X$ be a Banach space and let $x\in S_X$ be a Daugavet point. Then, for every slice $S\subset B_X$ and every $\varepsilon>0$, there exists a slice $T\subset S$ such that $\norm{x+\theta y}>2-\varepsilon$ for every $y\in T$ and every $\theta\in\mathbb{T}$.
\begin{proof}
Take a slice $S\subset B_X$, $\varepsilon>0$ and an $(\varepsilon/2)$-net $\{\theta_i\colon i=1,\ldots,n\}$ for $\mathbb{T}$. By \cite[Remark~2.3]{JungRueda} applied to $\overline{\theta_1} x$ which is a Daugavet point,  we can find a slice $S_1\subset S$ satisfying $\norm{x+\theta_1y}2-\varepsilon/2$ for any $y\in S_1$. Making use of \cite[Remark~2.3]{JungRueda} again, we can find for $\theta_2$ a slice $S_2\subset S_1\subset S$ so that $\norm{x+\theta_2y}>2-\varepsilon/2$ for any $y\in S_2$. Using this method $n$ times yields us a slice $S_n$ such that
\[
S_n\subset S_{n-1}\subset \dots \subset S_1\subset S
\]
and for any $y\in S_n$ we have $\norm{x+\theta_iy}>2-\varepsilon/2$. In conclusion, take $y\in T:=S_n$, $\theta\in\mathbb{T}$ and find $\theta_i\in\{1,\dots,n\}$ such that $\abs{\theta-\theta_i}<\varepsilon/2$. Then,
\[
\norm{x+\theta y}\geq \norm{x+\theta_i y} - \norm{\theta y-\theta_i y} >2-\frac{\varepsilon}{2}-\abs{\theta-\theta_i} > 2-\varepsilon.\qedhere
\]
\end{proof}
\end{proposition}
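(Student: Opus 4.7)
The plan is to exploit the fact that Daugavet points admit a \emph{uniform subslice} version of their defining property (from \cite[Remark~2.3]{JungRueda}): given a Daugavet point $z$, any slice $S$ of $B_X$, and any $\eta>0$, one can find a subslice $T'\subset S$ such that $\|z+y\|>2-\eta$ for every $y\in T'$. Since the property of being a Daugavet point is invariant under multiplication by a modulus one scalar (the defining condition involves all slices, and multiplying by $\theta\in\T$ is a bijection on the set of slices of $B_X$), every rotation $\overline{\theta}x$ of $x$ is again a Daugavet point.

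First, I would fix $\varepsilon>0$, a slice $S\subset B_X$, and pick an $(\varepsilon/2)$-net $\{\theta_1,\ldots,\theta_n\}\subset\T$. Then I would proceed by induction on $i=1,\ldots,n$: setting $S_0:=S$, at the $i$-th step I apply the uniform subslice property to the Daugavet point $\overline{\theta_i}x$ and the slice $S_{i-1}$ with tolerance $\varepsilon/2$, obtaining a subslice $S_i\subset S_{i-1}$ such that $\|\overline{\theta_i}x+y\|>2-\varepsilon/2$, equivalently $\|x+\theta_i y\|>2-\varepsilon/2$, for every $y\in S_i$. Setting $T:=S_n$ yields a slice contained in $S$ on which the inequality $\|x+\theta_i y\|>2-\varepsilon/2$ holds \emph{simultaneously} for all $i=1,\ldots,n$.

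Finally, for arbitrary $\theta\in\T$, pick $\theta_i$ with $|\theta-\theta_i|<\varepsilon/2$, and for any $y\in T$ estimate
\[
\|x+\theta y\|\geq \|x+\theta_i y\|-|\theta-\theta_i|\cdot\|y\|>\Bigl(2-\frac{\varepsilon}{2}\Bigr)-\frac{\varepsilon}{2}=2-\varepsilon,
\]
which is precisely the conclusion.

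The only step that requires care is the iteration: I need to make sure that at each stage $S_i$ is still a slice (not merely a convex subset of $S_{i-1}$), so that the uniform subslice result can be reapplied. This is exactly what is guaranteed by \cite[Remark~2.3]{JungRueda}. The fact that we pass to a subslice at each step (rather than obtaining the conclusion inside the original slice) is what allows the intersection over the finite net to remain a slice, and is the only mildly delicate aspect of the argument; the rest is a routine net-and-triangle-inequality reduction of a continuous parameter ($\theta\in\T$) to a finite one.
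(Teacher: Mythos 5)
Your proposal is correct and follows essentially the same route as the paper: both invoke \cite[Remark~2.3]{JungRueda} for each rotation $\overline{\theta_i}x$ in an $(\varepsilon/2)$-net, iterate to build a nested chain of subslices $S_n\subset\dots\subset S_1\subset S$, and then close with the triangle inequality $\|x+\theta y\|\geq\|x+\theta_i y\|-|\theta-\theta_i|$. The only difference is that you make explicit the (true, and worth noting) observation that rotations of a Daugavet point are again Daugavet points, which the paper uses implicitly.
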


According to \cite[Proposition~3.1]{JungRueda} (respectively, \cite[Lemma~3.7]{MPRZ}), Daugavet points (respectively, super Daugavet points) are at distance $2$ from every denting point (respectively, point of continuity) of the unit ball. Actually, the same is true for $\nabla$-points \cite[Proposition 2.6]{HLPV23}. With AD points, we have a similar (weaker) result whose proof is straightforward.

\begin{lemma}\label{lemma: denting points far from AD points}
Let $X$ be a Banach space and let $x\in S_X$ be an AD point. Then, for each $y\in \dent(B_X)$ there exists $\theta\in\mathbb{T}$ so that $\norm{x+\theta y}=2$.
\end{lemma}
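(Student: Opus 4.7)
The plan is to combine the denting structure at $y$ with the slice characterization of AD points given in Proposition~\ref{prop: Equivalent characterizations of AD points}(iv). Since $y$ is a denting point of $B_X$, for each $n \in \N$ there is a slice $S_n$ of $B_X$ containing $y$ with $\diam(S_n) < 1/n$. Applying the slice characterization of AD points to $S_n$ with $\eps = 1/n$, we obtain a modulus one scalar $\theta_n \in \T$ and a subslice $T_n \subset S_n$ such that $\norm{x + \theta_n z} > 2 - 1/n$ for every $z \in T_n$.

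Now I pick any $z_n \in T_n$. Since both $z_n$ and $y$ lie in $S_n$, we have $\norm{z_n - y} < 1/n$, and therefore
\begin{equation*}
\norm{x + \theta_n y} \geq \norm{x + \theta_n z_n} - \norm{z_n - y} > 2 - \tfrac{2}{n}.
\end{equation*}
By compactness of $\T$, I can extract a subsequence $(\theta_{n_k})$ converging to some $\theta \in \T$. Then
\begin{equation*}
\norm{x + \theta y} \geq \norm{x + \theta_{n_k} y} - \abs{\theta - \theta_{n_k}} \longrightarrow 2,
\end{equation*}
which yields $\norm{x + \theta y} = 2$, as required.

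There is no real obstacle here: the proof is a direct application of the equivalent characterization (iv) of AD points to a nested sequence of slices witnessing that $y$ is denting, together with the compactness of $\T$ to handle the fact that the optimal rotation a priori depends on the slice. This is why the authors describe the proof as straightforward.
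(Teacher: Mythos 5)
Your proof is correct. Since the paper states only that the proof is ``straightforward'' without providing one, there is nothing explicit to compare against, but your argument is exactly the natural one they have in mind: small slices at $y$ from dentability, the slice characterization of AD points to produce a near-optimal rotation and witness, a triangle inequality to pull the estimate back to $y$, and compactness of $\T$ to extract a limiting rotation. One small remark: invoking Proposition~\ref{prop: Equivalent characterizations of AD points}(iv) to get a whole subslice $T_n$ is slightly more than you need -- the bare definition of an AD point already gives you a single $z_n\in S_n$ and $\theta_n\in\T$ with $\norm{x+\theta_n z_n}>2-1/n$, and the rest of your argument goes through unchanged.
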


There is also a natural super AD counterpart of the previous lemma, also with straightforward proof.

\begin{lemma}\label{prop:SAD_distance_to_PC_points}
Let $X$ be a Banach space and let $x\in S_X$ be a super AD point. Then,  for every $z\in \PC(B_X)\cap S_X$, there exists $\theta\in\T$ such that $\norm{x+\theta z}=2$. 
\end{lemma}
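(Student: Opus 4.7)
The plan is to mimic the argument used in the proof of Proposition~\ref{prop: K has the super ADP}, replacing the Kadec hypothesis there by the local point-of-continuity assumption at $z$, and then using compactness of $\T$ to extract a single rotation that witnesses the equality.

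More precisely, I would fix an arbitrary $\eps>0$ and use that $z\in \PC(B_X)\cap S_X$ to select a relatively weakly open subset $W_\eps$ of $B_X$ containing $z$ with $\diam(W_\eps)<\eps$. Since $z\in W_\eps\cap S_X$, this $W_\eps$ meets $S_X$, so I may invoke the super AD property of $x$ at $W_\eps$ to obtain $y_\eps\in W_\eps$ and $\theta_\eps\in\T$ such that $\|x+\theta_\eps y_\eps\|>2-\eps$. By the diameter condition $\|y_\eps-z\|<\eps$, hence
\[
\|x+\theta_\eps z\|\geq \|x+\theta_\eps y_\eps\|-\|\theta_\eps(z-y_\eps)\|>2-2\eps.
\]

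Now I would use that $\T$ is compact: choose a sequence $\eps_n\to 0$ and pass to a subsequence (still denoted $(\eps_n)$) along which $\theta_{\eps_n}\to\theta$ for some $\theta\in\T$. Continuity of the norm yields $\|x+\theta z\|=\lim_n\|x+\theta_{\eps_n}z\|\geq 2$, and since obviously $\|x+\theta z\|\leq\|x\|+\|z\|=2$, equality holds. No step here is really an obstacle; the only subtle point is noticing that the PC neighborhoods $W_\eps$ automatically intersect $S_X$ (as they contain $z\in S_X$), so the super AD definition genuinely applies.
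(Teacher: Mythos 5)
Your proof is correct and follows essentially the same route as the paper's (which is implicit in the argument given for Proposition~\ref{prop: K has the super ADP}): choose a small-diameter weak neighborhood of $z$ via the point-of-continuity hypothesis, use the super AD property to find a nearby $y_\eps$ and a rotation $\theta_\eps$ with $\|x+\theta_\eps y_\eps\|$ close to $2$, transfer this estimate to $z$ using the diameter bound, and then extract a convergent subsequence of rotations by compactness of $\T$. The paper phrases the conclusion as $\max_{\theta\in\T}\|x+\theta z\|=2$ (with the maximum attained since $\T$ is compact), while you make the compactness extraction explicit; these are the same argument.
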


Note that the two previous results are actually characterizations of AD points and super AD points in spaces with the RNP and the CPCP, respectively. In the first case, this is because the unit ball is the closed convex hull of its denting points; in the latter case, because the points of continuity are weakly dense in the unit ball. 

\begin{corollary}\label{corollary: characterizing AD and super AD points RNP and CPCP}
Let $X$ be a Banach space.
\begingroup \parskip=0pt
\begin{enumerate}
  \item If $X$ has the RNP, then $x\in S_X$ is an AD point if (and only if) for every $y\in \dent(B_X)$ there exists $\theta\in\mathbb{T}$ so that $\norm{x+\theta y}=2$.
  \item If $X$ has the CPCP, then $x\in S_X$ is a super AD point if (and only if) for every $z\in \PC(B_X)\cap S_X$, there exists $\theta\in\T$ such that $\norm{x+\theta z}=2$.
\end{enumerate}
\endgroup
\end{corollary}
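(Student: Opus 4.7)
My plan is to establish the two ``if'' directions, since the corresponding ``only if'' parts are immediate from Lemma~\ref{lemma: denting points far from AD points} and Lemma~\ref{prop:SAD_distance_to_PC_points} respectively.

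For part (1), I will apply the equivalent characterization of Proposition~\ref{prop: Equivalent characterizations of AD points}(v): $x\in S_X$ is an AD point if and only if $B_X=\cconv\bigl(\T\Delta_\eps(x)\bigr)$ for every $\eps>0$. Since $X$ has the RNP, $B_X=\cconv(\dent(B_X))$, so it is enough to prove $\dent(B_X)\subseteq\T\Delta_\eps(x)$ for every $\eps>0$. A denting point is extreme in $B_X$, and therefore lies on $S_X$; the hypothesis then provides $\theta\in\T$ with $\|x+\theta y\|=2$, i.e.\ $-\theta y\in\Delta_\eps(x)$, so $y\in\T\Delta_\eps(x)$.

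For part (2), my strategy is to establish first the sublemma that for every non-empty weakly open subset $W$ of $B_X$ with $W\cap S_X\neq\emptyset$, the set $W\cap\PC(B_X)\cap S_X$ is non-empty; the hypothesis applied to any point therein will then produce $\theta\in\T$ and $y\in W$ with $\|x+\theta y\|=2$, which is more than enough. To prove the sublemma, I pick $y_0\in W\cap S_X$ and a basic weak neighborhood $U=\{y\in B_X\colon |f_i(y-y_0)|<\delta_i,\ i=1,\ldots,n\}\subseteq W$ with $\|f_i\|=1$, then by Hahn--Banach choose $f\in S_{X^*}$ with $f(y_0)=1$. For $\delta\in(0,\min_i\delta_i/2)$ to be chosen, I consider the weakly open set $V:=\{y\in B_X\colon |f_i(y-y_0)|<\delta_i/2,\ \forall i\}\cap\{y\colon \re f(y)>1-\delta\}$, which contains $y_0$; the CPCP provides a $z\in V\cap\PC(B_X)$. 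Then $\|z\|>1-\delta$, so the normalization $z':=z/\|z\|$ satisfies $\|z'-z\|=1-\|z\|<\delta$; the triangle inequality on each $f_i$ now gives $z'\in U\subseteq W$. The final step, showing $z'\in\PC(B_X)$, uses the scale invariance $\|z\|B_X\subseteq B_X$: for any net $(y_\beta)\subseteq B_X$ with $y_\beta\to z'$ weakly, the scaled net $\|z\|y_\beta$ stays in $B_X$ and converges weakly to $z$, hence in norm by $z\in\PC(B_X)$; dividing by $\|z\|>0$ yields $y_\beta\to z'$ in norm.

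The main obstacle is this sublemma: CPCP only guarantees PC points in $B_X$, and these may have norm strictly less than one, so a geometric upgrade is required to place them on the sphere without leaving the weak neighborhood. The slice-plus-normalization trick, combined with the scaling argument for the PC property, is the non-trivial content beyond a purely density-based approach.
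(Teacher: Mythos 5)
Your argument follows the same two-line route the paper has in mind: part (1) combines $B_X=\cconv(\dent(B_X))$ with Proposition~\ref{prop: Equivalent characterizations of AD points}(v), and part (2) uses the weak density of $\PC(B_X)$ in $B_X$ coming from the CPCP. The one genuinely substantive step, which the paper states without proof here and also uses implicitly in the proof of Proposition~\ref{prop:separable_CPCP_not_super_ADP} (``$W$ must contain a point of continuity $y$ of $B_X$ that belongs to $S_X$''), is precisely your sublemma: weak density of $\PC(B_X)$ by itself only supplies PC points in a weakly open set, not on the sphere. Your slice-plus-normalization construction supplies that missing detail, and the scaling observation (multiplying a net in $B_X$ by $\|z\|\leq 1$ keeps it in $B_X$, so weak-to-norm continuity at $z$ can be transported to $z/\|z\|$) is the right way to see that the normalization preserves membership in $\PC(B_X)$. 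In short, the proposal is correct and essentially matches the paper, while making explicit a step the paper only gestures at.
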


We finish this subsection by proving that ``ccs AD points'' are actually ccs Daugavet points. Therefore, the only interesting stronger version of AD point is the one of super AD point.

\begin{proposition}\label{proposition: ccs AD is the same as ccs Daugavet}
Let $X$ be a Banach space and $x\in S_X$. Assume that for every convex combination of slices $C$ of $B_X$, we have that $\sup_{y\in C}\max_{\theta\in\T} \norm{x+\theta y}=2$. Then, $x$ is a ccs Daugavet point.
\end{proposition}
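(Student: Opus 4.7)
The plan is to show that, under the ccs AD hypothesis, for every convex combination of slices $C$ of $B_X$ and every $\varepsilon_0 > 0$ there exists $y \in C$ with $\|x+y\| > 2 - \varepsilon_0$. The key idea is that the ccs structure is flexible enough to absorb the modulus-one scalar hidden in the ccs AD condition by averaging over a dense net of rotations applied to each slice making up $C$.

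Fix $C = \sum_{i=1}^n \lambda_i S_i$ and $\varepsilon_0 > 0$, choose a small auxiliary $\varepsilon > 0$ (to be tuned at the end), and take a uniform net $\theta_1,\dots,\theta_N \in \T$ of mesh at most $\pi/N$ (for instance, $N$-th roots of unity in the complex case, or $\{-1,1\}$ with $N=2$ in the real case). Since $B_X$ is balanced, each $\overline{\theta_k} S_i$ is again a slice of $B_X$, so
\[
\tilde C := \sum_{i=1}^n \sum_{k=1}^N \frac{\lambda_i}{N}\bigl(\overline{\theta_k} S_i\bigr)
\]
is a convex combination of $nN$ slices of $B_X$, hence a ccs of $B_X$.

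Applying the hypothesis to $\tilde C$, there exist $\tilde y \in \tilde C$ and $\theta^{\ast} \in \T$ with $\|x + \theta^{\ast} \tilde y\| > 2 - \varepsilon$. Writing $\tilde y = \sum_{i,k}\frac{\lambda_i}{N}\overline{\theta_k}\,y_{i,k}$ with $y_{i,k}\in S_i$, and setting $z_k := \sum_i \lambda_i y_{i,k} \in C$ and $\mu_k := \theta^{\ast} \overline{\theta_k}$, a direct rearrangement yields
\[
x + \theta^{\ast} \tilde y \;=\; \frac{1}{N}\sum_{k=1}^N \bigl(x + \mu_k z_k\bigr),
\]
so by the triangle inequality $\frac{1}{N}\sum_k \|x+\mu_k z_k\| > 2 - \varepsilon$. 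Since each term is $\leq 2$, the set $G := \{k : \|x + \mu_k z_k\| > 2 - \sqrt{\varepsilon}\}$ satisfies $|G|/N > 1 - \sqrt{\varepsilon}$. On the other hand, $\{\mu_k\}$ is a rotation of the original net and therefore still $\pi/N$-dense in $\T$, so for $N$ large enough the number of indices with $|\mu_k - 1| \leq 2\pi\sqrt{\varepsilon}$ strictly exceeds the number of indices outside $G$. A pigeonhole argument then yields $k_0 \in G$ with $|\mu_{k_0} - 1| \leq 2\pi\sqrt{\varepsilon}$, for which
\[
\|x + z_{k_0}\| \;\geq\; \|x + \mu_{k_0}z_{k_0}\| - |\mu_{k_0} - 1| \;>\; 2 - (1 + 2\pi)\sqrt{\varepsilon},
\]
and choosing $\varepsilon$ small enough that $(1+2\pi)\sqrt{\varepsilon}<\varepsilon_0$ produces $y := z_{k_0} \in C$ with $\|x+y\| > 2 - \varepsilon_0$.

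The crucial step is the pigeonhole balancing: one must choose $N$ and $\varepsilon$ in such a way that, even after discarding the bad indices outside $G$, at least one surviving index has its rotation $\mu_{k_0}$ within $O(\sqrt{\varepsilon})$ of $1$. The symmetrized construction of $\tilde C$ is exactly what lets this happen, because the hidden scalar $\theta^{\ast}$ from the hypothesis then interacts additively with a net of rotations that is dense in $\T$, forcing some rotation near $1$ to remain in the good set.
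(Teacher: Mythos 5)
Your proof is correct and follows essentially the same strategy as the paper: symmetrize $C$ by rotating each defining slice over a finite net in $\T$, apply the hypothesis to the symmetrized convex combination of slices, decompose the resulting point into pieces $z_k \in C$ carrying rotations $\mu_k$, and conclude from the piece whose rotation is close to $1$. The only substantive difference is bookkeeping: the paper applies the hypothesis with tolerance $\varepsilon/(4m)$ so that \emph{every} piece $z_j$ is automatically good and one can simply pick the index $j_0$ with $\mu_{j_0}$ nearest $1$, whereas your looser tolerance $\varepsilon$ makes only a $(1-\sqrt{\varepsilon})$-fraction of indices good and forces the additional pigeonhole step with $N$ large relative to $1/\sqrt{\varepsilon}$.
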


\begin{proof}
Fix $\varepsilon>0$ and a convex combination of slices $C:=\sum_{i=1}^n\lambda_i S_i$, where $S_i=S(B_X,f_i,\alpha_i)$, $\lambda_i>0$ for all $i=1,\ldots,n$, and $\sum_{i=1}^{n}\lambda_i=1$. It suffices to find $y\in C$ satisfying $\norm{x+y}>2-\varepsilon$. First, let us choose a $\varepsilon/2$-net $K:=\{\theta_1,\dots,\theta_m\}$ for $\mathbb{T}$. Consider the following convex combination of slices:
    \[
    D:= \frac{1}{m}\sum_{j=1}^m\sum_{i=1}^n\lambda_i S(B_X,\theta_jf_i,\alpha_i).
    \]
    By hypothesis, we can find an element $d\in D$ and $\theta_0\in\mathbb{T}$, for which $\norm{x+\theta_0d}>2-\varepsilon/4m$. See that
    \[
    d=\frac{1}{m}\sum_{j=1}^m\sum_{i=1}^n\lambda_i s^j_i,
    \]
    where $s^j_i\in S(B_X,\theta_jf_i,\alpha_i)$. Evidently, in this case, we have the inclusion $\overline{\theta_j}s^j_i\in S(B_X,f_i,\alpha_i)$. This implies that $\sum_i\lambda_i\overline{\theta_j}s^j_i\in C$ for each $j\in\{1,\dots,m\}$. Find $\theta_{N}\in K$ such that $\bigl|\theta_0-\overline{\theta_{N}}\bigr|<\varepsilon/2$. We now show that we can take $y:=\sum_i\lambda_i\overline{\theta_{N}}s^{N}_i\in C$, for which $\norm{x+y}>2-\varepsilon$. Assume, on the contrary, that $\norm{x+y}\leq 2-\varepsilon$. Then
    \begin{align*}
        2-\frac{\varepsilon}{4m}&< \norm{x+\theta_0 d}= \norm{x+\frac{\theta_0}{m}\sum_{j=1}^m\sum_{i=1}^n\lambda_i s^j_i}\\
        &\leq \frac{\norm{x+\theta_0\sum_{i=1}^n\lambda_is^{N}_i}}{m}+\sum_{j\neq N}\frac{\norm{x+\theta_0\sum_{i=1}^n\lambda_is^j_i}}{m}\\
        &\leq \frac{\norm{x+\theta_0\theta_N\sum_{i=1}^n\lambda_i\overline{\theta_N}s^{N}_i}}{m} +\frac{2(m-1)}{m}=\frac{\norm{x+\theta_0\theta_Ny}}{m}+\frac{2(m-1)}{m}.
    \end{align*}
    We can estimate the first term:
    \begin{align*}
\norm{x+\theta_0\theta_Ny}&\leq \norm{x+y}+\norm{\theta_0\theta_Ny-y}\\
    &\leq2-\varepsilon + \abs{\theta_0\theta_N-1}=2-\varepsilon+\bigl|\theta_0-\overline{\theta_{N}}\bigr| <2-\frac{\varepsilon}{2}.
    \end{align*}
Therefore, we conclude that
\begin{align*}
    2-\frac{\varepsilon}{4m}&< \frac{\norm{x+\theta_0\theta_Ny}}{m}+\frac{2(m-1)}{m}\\
    &<\frac{2-\varepsilon/2}{m}+\frac{2(m-1)}{m}=\frac{2m}{m}-\frac{\varepsilon}{2m}= 2-\frac{\varepsilon}{2m},
\end{align*}
which is a contradiction.
\end{proof}

We get the following characterization of the Daugavet property (compare it with the one in \cite{Kadets21}). 

\begin{corollary}\label{corollary: ccs-AD is DP}
Let $X$ be a Banach space. Suppose that for every $x\in S_X$ and every convex combination of slices $C$ of $B_X$ we have that $\sup_{y\in C}\max_{\theta\in\T} \norm{x+\theta y}=2$. Then, $X$ has the Daugavet property.
\end{corollary}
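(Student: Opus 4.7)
The statement is almost a direct combination of two results already established in the excerpt, so the plan is essentially to chain them together. The plan is to first observe that the hypothesis, stated for every $x\in S_X$, is exactly the pointwise hypothesis of Proposition~\ref{proposition: ccs AD is the same as ccs Daugavet} applied at every unit-sphere point. That proposition then upgrades each such $x$ to a ccs Daugavet point, giving us that every element of $S_X$ is a ccs Daugavet point.

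Next, I would invoke the characterization of the Daugavet property in terms of ccs Daugavet points recalled in the introduction (coming from Shvydkoy's lemma, \cite[Lemma~3]{SHVYDKOY00}): a Banach space $X$ has the DP if and only if every $x\in S_X$ is a ccs Daugavet point, i.e.\ $\sup_{y\in C}\|x+y\|=2$ for every convex combination of slices $C$ of $B_X$. Combining these two steps immediately yields the Daugavet property.

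There is no real obstacle — the content is already in Proposition~\ref{proposition: ccs AD is the same as ccs Daugavet}, and the corollary is just packaging the globalization. The only thing to be careful about is to state explicitly that the quantification of the hypothesis ``for every $x\in S_X$'' is what allows the pointwise proposition to be upgraded into the global statement, and to cite (or recall) Shvydkoy's characterization of the DP via ccs Daugavet points so that the final conclusion is transparent. The proof should therefore be written as a two-sentence argument with the appropriate cross-references.
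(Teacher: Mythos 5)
Your plan is exactly the paper's implicit argument: Proposition~\ref{proposition: ccs AD is the same as ccs Daugavet} upgrades each $x\in S_X$ to a ccs Daugavet point, and Shvydkoy's characterization of the DP via ccs Daugavet points (recalled in the introduction from \cite[Lemma~3]{SHVYDKOY00}) then yields the Daugavet property. Correct and essentially identical to the paper's approach.
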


\begin{remark}
If $X$ has the strong diameter two property (that is, every convex combination of slices has diameter two), then every convex combination of slices almost reaches the unit sphere \cite[Theorem 3.1]{LMRZ2019}. Therefore, every spear in $X$ is a ``ccs AD point'', hence a ccs Daugavet point by Proposition~\ref{proposition: ccs AD is the same as ccs Daugavet}. 
\end{remark}

\subsection{Implications of AD points and super AD points on the geometry of the underlying space}\label{subsec:implications-of-points-in-geometry}
We first investigate the relations between rotundity properties of norms and AD points. To begin with, let us show that LUR points are never AD points.

\begin{proposition}\label{prop: LUR points no AD points}
Let $X$ be a Banach space with $\dim(X)>1$ and let $x\in S_X$ be a LUR point. Then, $x$ is not an AD point.
\end{proposition}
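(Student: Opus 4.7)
The plan is to combine characterization (iii) of AD points from Proposition~\ref{prop: Equivalent characterizations of AD points} with the LUR property at $x$, applied to a unit functional that vanishes at $x$. The key point is that LUR forces every vector $y$ that almost maximizes $\|x+y\|$ to be close to $x$, which then forces any functional $f$ to take similar values at $y$ and at $x$; if $f(x)=0$ this will contradict the defining feature of an AD point.

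First, since $\dim(X)>1$, the Hahn--Banach theorem supplies some $f\in S_{X^*}$ with $f(x)=0$: pick any $z\in X$ linearly independent from $x$, define a non-zero linear form on $\spn\{x,z\}$ vanishing on $x$, extend it, and normalize. Next, assume for contradiction that $x$ is an AD point. By the LUR property at $x$, there exists $\delta\in(0,1/6)$ such that every $y\in B_X$ with $\big\|\tfrac{x+y}{2}\big\|>1-\delta$ satisfies $\|y-x\|\leq 1/3$.

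Now apply characterization (iii) of Proposition~\ref{prop: Equivalent characterizations of AD points} with this functional $f$ and with $\varepsilon:=2\delta\in(0,1/3)$: there exists $y\in S_X$ such that
\[
|f(y)|>1-\varepsilon\geq \tfrac{2}{3}\qquad\text{and}\qquad \|x+y\|>2-\varepsilon=2-2\delta.
\]
The second inequality gives $\big\|\tfrac{x+y}{2}\big\|>1-\delta$, so by the LUR property, $\|y-x\|\leq 1/3$. Combined with $f(x)=0$ and $\|f\|=1$, this yields
\[
|f(y)|=|f(y-x)|\leq \|y-x\|\leq \tfrac{1}{3},
\]
contradicting $|f(y)|>2/3$.

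The only delicate step is choosing the functional $f$ so that the conclusion $|f(y)|>1-\varepsilon$ coming from the AD property is incompatible with $y$ being close to $x$; this is precisely where the hypothesis $\dim(X)>1$ is used, since it is exactly what allows $f$ to annihilate $x$. Beyond that, the argument is a routine interplay between the two quantitative definitions.
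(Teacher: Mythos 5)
Your proof is correct and uses the same essential mechanism as the paper's: choose a norm-one functional $f$ annihilating $x$ (possible because $\dim X>1$), note that the LUR property forces any $y\in B_X$ with $\|x+y\|$ near $2$ to be close to $x$ and hence to have $|f(y)|$ small, and derive a contradiction with characterization~(iii) of AD points. The only difference is organizational: the paper packages this argument into Lemma~\ref{lemma: points far from AD point are around the opposite element} (showing that $\Delta_{\varepsilon}(x)\subset B(-x,1-\delta)$ for some $\varepsilon,\delta>0$ precludes $x$ from being an AD point) and then checks its hypothesis from LUR, whereas you run the argument directly.
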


We need the following preliminary result.

\begin{lemma}\label{lemma: points far from AD point are around the opposite element}
Let $X$ be a Banach space with $\dim(X)>1$ and let $x\in S_X$. If there exists $\varepsilon,\delta>0$ such that $\Delta_{\varepsilon}(x)\subset B(-x,1-\delta)$, then $x$ is not an AD point.
\end{lemma}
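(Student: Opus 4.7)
The plan is to invoke characterization (v) of Proposition~\ref{prop: Equivalent characterizations of AD points}: an element of $S_X$ is an AD point if and only if $B_X = \overline{\conv}\bigl(\T\Delta_\eta(x)\bigr)$ for every $\eta > 0$. To show that $x$ is not an AD point, it therefore suffices to exhibit a single value of $\eta$ for which this equality fails. I would take $\eta = \varepsilon$ from the hypothesis and produce a continuous linear functional that separates some point of $B_X$ from $\overline{\conv}\bigl(\T\Delta_\varepsilon(x)\bigr)$.

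First I would translate the hypothesis into a convenient algebraic form. If $y \in \Delta_\varepsilon(x)$, then by assumption $\norm{y+x} \leq 1-\delta$. Hence for every $\theta \in \T$ we may write $\theta y = -\theta x + \theta(y+x)$ with $\theta(y+x) \in (1-\delta)B_X$, yielding the inclusion
\[
\T\Delta_\varepsilon(x) \;\subset\; \T x + (1-\delta)B_X.
\]

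Next, since $\dim(X) > 1$, the subspace $\K x$ is a proper closed subspace of $X$, so the Hahn--Banach theorem provides $f \in S_{X^*}$ with $f(x) = 0$. Combining this with the inclusion above, for every $\theta \in \T$ and every $y \in \Delta_\varepsilon(x)$ I get
\[
\abs{f(\theta y)} = \abs{f(\theta(y+x))} \leq \norm{f}\cdot\norm{y+x} \leq 1-\delta.
\]
Because $\abs{f}$ is a continuous seminorm on $X$, the sublevel set $\{z \in X : \abs{f(z)} \leq 1-\delta\}$ is closed and convex, hence it contains $\overline{\conv}\bigl(\T\Delta_\varepsilon(x)\bigr)$. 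On the other hand, the fact that $\norm{f} = 1$ guarantees the existence of some $y_0 \in B_X$ with $\abs{f(y_0)} > 1-\delta$, and such a $y_0$ cannot lie in $\overline{\conv}\bigl(\T\Delta_\varepsilon(x)\bigr)$. Therefore $B_X \neq \overline{\conv}\bigl(\T\Delta_\varepsilon(x)\bigr)$, and $x$ is not an AD point.

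I do not expect any serious obstacle here: the argument is a short separation argument whose only ingredients beyond characterization (v) are the passage from the inclusion of $\Delta_\varepsilon(x)$ in a ball around $-x$ to the inclusion of $\T\Delta_\varepsilon(x)$ in a tubular neighborhood of $\T x$, and the use of the assumption $\dim(X)>1$ to produce a norm-one functional vanishing on $x$.
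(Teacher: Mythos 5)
Your proof is correct and rests on the same core idea as the paper's: since $\dim X>1$, pick $f\in S_{X^*}$ with $f(x)=0$ and observe that the hypothesis forces $\abs{f}\leq 1-\delta$ on $\Delta_\varepsilon(x)$ (hence on $\T\Delta_\varepsilon(x)$), so one can no longer approximate the supremum $\norm{f}=1$ inside the set supposed to generate $B_X$. The paper runs this directly through characterization (iii) of Proposition~\ref{prop: Equivalent characterizations of AD points} (almost-norming points for $f$ escape $\Delta_\varepsilon(x)$), while you run it through the closed-convex-hull characterization (v); these are just the two sides of the same separation argument.
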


\begin{proof}
To show that $x$ is not an AD point, it suffices to find $f\in S_{X^*}$ and $\eta,\xi>0$ such that for all $y\in S_X$, if $\abs{f(y)}> 1-\eta$, then $y\notin\Delta_\xi(x)$. Assume that we have $\varepsilon,\delta>0$ such that $\Delta_{\varepsilon}(x)\subset B(-x,1-\delta)$. Since $\dim(X)>1$, we can find $f\in S_{X^*}$ such that $f(x)=0$.  Take $y\in S_X$ such that $\abs{f(y)}>1-\delta$. Then 
\[
\norm{-x-y}\geq \abs{f(x+y)} = \abs{f(y)}>1-\delta.
\]
This means that $y\not\in B(-x,1-\delta)$, hence $y\not\in \Delta_{\varepsilon}(x)$, which concludes the proof.
\end{proof}

\begin{proof}[Proof of Proposition~\ref{prop: LUR points no AD points}]
Fix $\varepsilon\in(0,1)$. By definition of LUR, there exists $\delta>0$ such that every $y\in B_X$ satisfies the implication
    \[
    \norm{\frac{x+y}{2}}>1-\delta \implies \norm{x-y}\leq \varepsilon.
    \]
    Let us show that Lemma~\ref{lemma: points far from AD point are around the opposite element} holds for $x$. Take any $\varepsilon_1\in(0,2\delta)$ and $\delta_1\in(0,1-\varepsilon)$. Then we claim that $\Delta_{\varepsilon_1}(x)\subset B(-x,1-\delta_1)$. Indeed, if $y\in B_X$ is such that $\norm{x-y}>2-\varepsilon_1$, then
    \[
    \norm{\frac{x-y}{2}}>1-\frac{\varepsilon_1}{2} >1-\delta,
    \]
    and therefore
    \[
    \norm{-x-y}= \norm{x+y}\leq \varepsilon \leq 1-\delta_1.\qedhere
    \]
\end{proof}

As a consequence, we get the following result, related to \cite[Proposition~2.4]{KMMP2009}, where it is shown that spaces with the ADP do not contain LUR points.

\begin{corollary}\label{coro: UC spaces have no AD points}
Let $X$ be a Banach space with $\dim(X)>1$ which is LUR (in particular, if it is uniformly convex). Then, $X$ does not have any AD point.
\end{corollary}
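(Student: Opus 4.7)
The plan is extremely short because this is a direct consequence of the preceding Proposition~\ref{prop: LUR points no AD points}. First I would unpack the hypothesis: by definition, $X$ is LUR exactly when every $x \in S_X$ is a LUR point. Since $\dim(X)>1$, I can then invoke Proposition~\ref{prop: LUR points no AD points} at an arbitrary $x \in S_X$ to conclude that $x$ is not an AD point.

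For the parenthetical case of uniform convexity, I would simply recall the standard fact that every uniformly convex Banach space is LUR (the modulus of convexity $\delta(\eps)>0$ produces, uniformly in $x$, the required $\delta$ witnessing LUR at $x$), which reduces this case to the one already handled. Since the statement only asserts the absence of AD points on the unit sphere, and AD points are by definition sphere elements, there is nothing else to check.

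There is no real obstacle here: the content of the corollary is entirely carried by Proposition~\ref{prop: LUR points no AD points}, and the only work is packaging the quantifier ``every point is LUR'' into the global statement ``no point is AD''.
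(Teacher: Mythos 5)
Your proposal is correct and is exactly the intended argument: the paper leaves this corollary without proof precisely because it follows immediately from Proposition~\ref{prop: LUR points no AD points} applied pointwise to every $x\in S_X$, together with the standard fact that uniformly convex spaces are LUR. Nothing to add.
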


We now deal with the relation between super AD points and differentiability of the norm. The following result can be immediately extracted from the proof of Proposition~\ref{prop:separable_CPCP_not_super_ADP}.

\begin{proposition}\label{prop:PC-wdense-GdifnosuperAD}
Let $X$ be an infinite-dimensional Banach space such that $\PC(B_X)$ is weakly dense in $B_X$ (for instance, if $X$ has the CPCP). If $x\in S_X$ is a point of G\^{a}teaux differentiability of the norm of $X$, then $x$ is not a super AD point. 
\end{proposition}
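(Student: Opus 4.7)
The plan is to mimic the proof of Proposition~\ref{prop:separable_CPCP_not_super_ADP} almost verbatim, the only change being that we now exploit the infinite-dimensionality of $X$ in order to ensure that all the relatively weakly open sets we construct automatically intersect $S_X$ (previously this was granted by the unique functional having $\dim(X)\geq 2$ and by $y$ being explicitly located on $S_X$).

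Concretely, using Gâteaux differentiability, I would pick the unique $f\in S_{X^*}$ supporting $x$ and set $W:=\{y\in B_X\colon |f(y)|<1/2\}$, which is non-empty and weakly open, and intersects $S_X$ since $X$ is infinite-dimensional (in infinite dimension $S_X$ is weakly dense in $B_X$). The weak density of $\PC(B_X)$ in $B_X$ then provides some $y\in W\cap \PC(B_X)$.

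The first key step is to argue that $\rho:=\max_{\theta\in\T}\|x+\theta y\|<2$. Indeed, if $\|x+\theta y\|=2$ for some $\theta\in\T$, then necessarily $\|y\|=1$, and any $g\in S_{X^*}$ supporting $x+\theta y$ would satisfy $\re g(x)=1$, hence $g=f$ by the uniqueness coming from Gâteaux differentiability. But then $1=\re(\theta f(y))\leq |f(y)|<1/2$, a contradiction. The second key step is to use $y\in \PC(B_X)$ to produce a relatively weakly open neighborhood $W_1\subset B_X$ of $y$ with $\diam(W_1)<(2-\rho)/2$; since $X$ is infinite-dimensional, $W_1$ intersects $S_X$ automatically. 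A triangle inequality estimate then yields, for every $z\in W_1$,
\[
\max_{\theta\in\T}\|x+\theta z\|\leq \rho + \|z-y\| < \rho + \frac{2-\rho}{2} = 1 + \frac{\rho}{2} < 2,
\]
so $\sup_{z\in W_1}\max_{\theta\in\T}\|x+\theta z\|<2$, showing that $x$ is not a super AD point.

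There is no real obstacle here beyond what was handled in Proposition~\ref{prop:separable_CPCP_not_super_ADP}; the one subtle point to keep track of is that the $S_X$-intersection clause in the definition of a super AD point is granted ``for free'' by the infinite-dimensional assumption rather than, as in the previous proposition, by placing the point of continuity inside $S_X$.
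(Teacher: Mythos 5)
Your proof is correct and follows essentially the same route as the paper, which simply notes that this statement ``can be immediately extracted from the proof of Proposition~\ref{prop:separable_CPCP_not_super_ADP}''. You reproduce that argument, with the appropriate substitution that, in infinite dimensions, every non-empty relatively weakly open subset of $B_X$ automatically meets $S_X$, so one no longer needs to arrange this by hand; this also forces every point of continuity of $B_X$ to lie on the sphere, so the rest of the proof carries over unchanged.
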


Observe that the analogous result for AD points is not true: $\ell_1$ has the RNP (hence, the CPCP), and has a dense subset of points of G\^{a}teaux differentiability of the norm (as being separable), and has the ADP, hence all elements in $S_{\ell_1}$ are AD points.

We will finally show that Example~\ref{example: c_0 AD but no SAD points} can be generalized to any asymptotically smooth spaces, for which we will introduce a bit of notation. Let $X$ be a Banach space. We denote by $\cof(X)$ the set of all subspaces of $X$ of finite codimension. Recall that the \emph{modulus of asymptotic smoothness} of $X$ at a point $x\in S_X$ is the function $\modaus{X}{x,\cdot}\colon \R^+\to \R^+$ given by 
\begin{equation*}
    \modaus{X}{x,t}:=\inf_{Y\in\cof(X)}\sup_{y\in S_Y}\{\norm{x+ty}-1\}.
\end{equation*}
It is easy to check that for a given $t\in\R^+$, $\modaus{X}{x,t}$ is the best constant $\rho$ for which we have the following: for every weakly null net $(x_\alpha)$ in $S_X$, $\limsup\norm{x+t x_\alpha}\leq 1+\rho$ (see e.g.\ \cite[Corollary~1.1.6]{PerreauPhDThesis}). The \emph{modulus of asymptotic smoothness} of the space $X$ is the function $\modaus{X}{\cdot}\colon \R^+\to \R_+$ given by \begin{equation*}
    \modaus{X}{t}:=\sup_{x\in S_X}\modaus{X}{x,t}.
\end{equation*} 
A point $x\in S_X$ is said to be \emph{asymptotically smooth} if $\lim_{t\to 0}\frac{\modaus{X}{x,t}}{t}=0$. The space $X$ is said to be \emph{asymptotically uniformly smooth} (\emph{AUS}, for short) if $\lim_{t\to 0}\frac{\modaus{X}{t}}{t}=0$. Prototypical examples of AUS spaces are $\ell_p$-sums ($p>1$) and $c_0$-sums of finite-dimensional spaces. We get that super AD point fails to be asymptotically smooth in the strongest possible way.

\begin{proposition}\label{prop: frechet diff points is not super AD point}
Let $X$ be an infinite-dimensional Banach space. If $x\in S_X$ is a super AD point, then $\modaus{X}{x,t}=t$ for every $t\in (0,1)$, hence $x$ is not asymptotically smooth (in particular, if $x$ is not a point of Fr\'{e}chet differentiability of the norm). 
\end{proposition}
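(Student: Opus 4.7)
The trivial upper bound $\modaus{X}{x,t}\leq t$ is immediate from the triangle inequality: for any $Y\in\cof(X)$ and $y\in S_Y$ one has $\norm{x+ty}-1\leq t$. The heart of the proof is therefore to establish $\modaus{X}{x,t}\geq t$ for every $t\in(0,1)$. The plan is to upgrade the super AD property at $x$ to a weakly null net in $S_X$ that almost reaches distance $2$ from $x$, and then to push such nets into arbitrary finite-codimensional subspaces while preserving the norm estimate.

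I would start by applying the net characterization of super AD points (Lemma~\ref{lemma:SAD_net_characterization}) at $y=0\in B_X$, which is permitted since $\dim(X)=\infty$. This yields $\theta\in\T$ and a net $(z_\alpha)$ in $S_X$ with $z_\alpha\rightharpoonup 0$ and $\norm{x+\theta z_\alpha}\to 2$. Replacing $z_\alpha$ by $\theta z_\alpha$, which preserves both membership in $S_X$ and weak convergence to $0$, I may assume $\theta=1$, so that $\norm{x+z_\alpha}\to 2$. Applying Remark~\ref{remark:convexityargument} to the pair $x,z_\alpha\in S_X$ then promotes the estimate to $\norm{x+tz_\alpha}\to 1+t$ for every $t\in[0,1]$.

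Next I would transfer the estimate from $z_\alpha$ into a prescribed $Y\in\cof(X)$. Write $Y=\bigcap_{i=1}^n\ker f_i$ for some linearly independent $f_1,\dots,f_n\in X^*$, fix biorthogonal vectors $e_1,\dots,e_n\in X$, and consider the bounded projection $P\colon X\to\spn\{e_1,\dots,e_n\}$ defined by $Pz:=\sum_{i=1}^n f_i(z)e_i$. Since $(z_\alpha)$ is weakly null, $f_i(z_\alpha)\to 0$ for every $i$, hence $\norm{Pz_\alpha}\to 0$. Setting $y_\alpha:=z_\alpha-Pz_\alpha\in Y$ gives $\norm{y_\alpha}\to 1$, and the normalized vectors $\tilde y_\alpha:=y_\alpha/\norm{y_\alpha}\in S_Y$ satisfy $\norm{\tilde y_\alpha-z_\alpha}\to 0$. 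Consequently $\norm{x+t\tilde y_\alpha}\to 1+t$, which yields $\sup_{y\in S_Y}(\norm{x+ty}-1)\geq t$. Taking the infimum over $Y\in\cof(X)$ gives $\modaus{X}{x,t}\geq t$, completing the desired equality.

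The remaining assertions are then routine. From $\modaus{X}{x,t}=t$ on $(0,1)$ we get $\modaus{X}{x,t}/t\equiv 1$, so $x$ is not asymptotically smooth. For the parenthetical statement, I would invoke the standard fact that Fr\'echet differentiability of the norm at $x$ (with derivative $f\in S_{X^*}$) produces, for every $\varepsilon>0$, some $\delta>0$ such that $\norm{x+h}\leq 1+\re f(h)+\varepsilon\norm{h}$ whenever $\norm{h}\leq\delta$; plugging $h=tz_\alpha$ for any weakly null net $(z_\alpha)\subset S_X$ would force $\modaus{X}{x,t}\leq\varepsilon t$ for small $t$, i.e.\ $x$ would be asymptotically smooth, contradicting what was just established. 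The main obstacle, I expect, is the projection step of the third paragraph: one needs the super AD property to supply not a mere single-direction approximation but a \emph{weakly null} net of approximating vectors, so that the finite-dimensional residuals vanish in norm and the distance-$2$ estimate survives truncation into $Y$.
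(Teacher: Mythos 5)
Your proof is correct and follows the paper's argument: apply the net characterization of super AD points (Lemma~\ref{lemma:SAD_net_characterization}) at $y=0$ to obtain a weakly null net in $S_X$ at distance tending to $2$ from $x$, absorb $\theta$ into the net, and use Remark~\ref{remark:convexityargument} to conclude $\norm{x+tz_\alpha}\to 1+t$. The only difference is that you re-derive, via the biorthogonal projection step, the weakly-null-net characterization of $\modaus{X}{x,t}$ that the paper cites directly from its preliminaries, so your write-up is the same proof with that cited fact unpacked.
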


\begin{proof}
Assume that $x$ is a super AD point. Then, by Lemma~\ref{lemma:SAD_net_characterization} there exists a net $(x_\alpha)$ in $S_X$ which converges weakly to $0$ and such that $\lim_\alpha\norm{x+x_\alpha}=2$. Then, by convexity, $\lim_\alpha\norm{x+tx_\alpha}=1+t$ for every $t\in (0,1)$, and this implies that $\modaus{X}{x,t}=t$. Hence, $x$ is not an asymptotically smooth point. 
\end{proof}

As a consequence, we extend the class of Banach spaces which fail to have super AD points.

\begin{corollary}
Let $(E_n)$ be a sequence of finite-dimensional spaces and let $p\in(1,\infty)$. Then, the spaces $\left[\bigoplus_{n\in\N}E_n\right]_{\ell_p}$ and  $\left[\bigoplus_{n\in\N}E_n\right]_{c_0}$ have no super AD points.
\end{corollary}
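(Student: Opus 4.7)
The plan is to deduce this immediately from Proposition~\ref{prop: frechet diff points is not super AD point} via its contrapositive: if every point of $S_X$ is asymptotically smooth, then $X$ admits no super AD point. Hence it suffices to verify that for $X=\left[\bigoplus_{n\in\N}E_n\right]_{\ell_p}$ with $p\in(1,\infty)$ and for $X=\left[\bigoplus_{n\in\N}E_n\right]_{c_0}$, every element of the unit sphere is asymptotically smooth.

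To this end, I would recall that both classes of spaces are asymptotically uniformly smooth (this is the prototypical AUS example mentioned right after the definition of AUS in the text). In other words, $\lim_{t\to 0}\modaus{X}{t}/t=0$. Since for every $x\in S_X$ we trivially have
\[
\modaus{X}{x,t}\leq \modaus{X}{t},
\]
it follows that $\lim_{t\to 0}\modaus{X}{x,t}/t=0$ as well, so every point of $S_X$ is asymptotically smooth. Applying Proposition~\ref{prop: frechet diff points is not super AD point}, none of these points can be a super AD point, which gives the desired conclusion.

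There is essentially no obstacle here; the only thing one might want to double-check (and this is routine and well-known, see e.g.\ the references discussing AUS renormings and $\ell_p$/$c_0$ sums) is the AUS-ness of the two sums, which for $c_0$-sums of finite-dimensional spaces follows from the fact that $\modaus{X}{t}=0$ for $t\in(0,1)$, and for $\ell_p$-sums from the standard estimate $\modaus{X}{t}\leq (1+t^p)^{1/p}-1$. Both estimates are uniform over $S_X$, so the argument above applies.
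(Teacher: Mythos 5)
Your argument is correct and is exactly the route the paper intends: the corollary is presented there without proof immediately after Proposition~\ref{prop: frechet diff points is not super AD point}, following the remark that $\ell_p$-sums ($1<p<\infty$) and $c_0$-sums of finite-dimensional spaces are asymptotically uniformly smooth, so every unit-sphere point is asymptotically smooth and hence cannot be a super AD point.
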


\section{Super AD points and the super ADP in some classical Banach spaces}\label{section: AD and super AD points in classical Banach spaces}
In this section we aim at characterizing super AD points and the super ADP in some Banach spaces, namely spaces $L_1(\mu)$ and $C(K)$, as well as their vector-valued versions $L_1(\mu,X)$ and $C(K,X)$. We will first deal with super AD points in subsections~\ref{subsec: SAD in absolute sums} (for $\ell_1$- and $\ell_\infty$-sums) and \ref{subsec: Super AD points in C(K) and L_1(mu)} (for $L_1(\mu,X)$ and $C(K,X)$), and then apply our results to the global property in subsection~\ref{subsec:exmples of super AD spaces}. 

\subsection{Super AD points in \texorpdfstring{$\ell_1$}{l1}- and \texorpdfstring{$\ell_\infty$}{linfty}-sums of Banach spaces}\label{subsec: SAD in absolute sums}
We start by gathering some results concerning super AD points in $\ell_1$- and $\ell_\infty$-sums of Banach spaces.

\begin{proposition}\label{prop: SAD from summands to 1-sum}
Let $X$ and $Y$ be Banach spaces, let $x\in S_X$ and $y\in S_Y$, and let $a,b>0$ with $a+b=1$. Then, we have the following implications: 
\begingroup \parskip=0pt
\begin{enumerate}
        \item If $x$ is a super AD point in $X$, then $(x,0)$ is a super AD point in $X\oplus_1 Y$.
        \item If $x$ is a super AD point in $X$ and $y$ is a super Daugavet point in $Y$, then $(ax,by)$ is a super AD point in $X\oplus_1 Y$.
    \end{enumerate} 
    \endgroup
\end{proposition}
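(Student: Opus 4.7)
The plan is to use the net characterization of super AD points (Lemma~\ref{lemma:SAD_net_characterization}): given any $(u_0,v_0)\in S_{X\oplus_1 Y}$, so that $\|u_0\|+\|v_0\|=1$, I will produce $\theta\in\T$ and a net $((u_\alpha,v_\alpha))$ in $B_{X\oplus_1 Y}$ that weakly converges to $(u_0,v_0)$ and satisfies $\|(x,0)+\theta(u_\alpha,v_\alpha)\|\to 2$ in part~(1), or $\|(ax,by)+\theta(u_\alpha,v_\alpha)\|\to 2$ in part~(2). Since $(X\oplus_1 Y)^*=X^*\oplus_\infty Y^*$, such a net will eventually enter any prescribed weak neighborhood of $(u_0,v_0)$, which is precisely what the super AD condition asks for.

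For part~(1), the case $u_0=0$ is trivial: then $\|v_0\|=1$ and $\|(x,0)+\theta(0,v_0)\|=2$ for any $\theta$. Otherwise, I set $\tilde u_0:=u_0/\|u_0\|\in S_X$ and apply Lemma~\ref{lemma:SAD_net_characterization} to produce $\theta\in\T$ and a net $(\tilde u_\alpha)$ in $B_X$ weakly convergent to $\tilde u_0$ with $\|x+\theta\tilde u_\alpha\|\to 2$; since $\|x+\theta\tilde u_\alpha\|\to 2$ forces $\|\tilde u_\alpha\|\to 1$, replacing $\tilde u_\alpha$ by $\tilde u_\alpha/\|\tilde u_\alpha\|$ eventually lets me assume $\tilde u_\alpha\in S_X$. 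The candidate net will be $(u_\alpha,v_\alpha):=(\|u_0\|\tilde u_\alpha,\,v_0)$, which lies in $B_{X\oplus_1 Y}$ since $\|u_\alpha\|+\|v_\alpha\|\leq\|u_0\|+\|v_0\|=1$ and converges weakly to $(u_0,v_0)$. Applying Remark~\ref{remark:convexityargument} with coefficients $1$ and $\|u_0\|$ to the relation $\|x+\theta\tilde u_\alpha\|>2-\varepsilon$ will yield $\|x+\theta u_\alpha\|>1+\|u_0\|-\varepsilon$, so $\|(x,0)+\theta(u_\alpha,v_\alpha)\|>1+\|u_0\|+\|v_0\|-\varepsilon=2-\varepsilon$.

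For part~(2), the additional ingredient that I will need is a companion net $(\tilde v_\alpha)$ in $B_Y$ weakly convergent to $\tilde v_0:=v_0/\|v_0\|$ with $\|y+\theta\tilde v_\alpha\|\to 2$ for the \emph{same} $\theta$ coming out of the super AD condition on $x$; this coordination of rotations is the main obstacle. It will be resolved by the rotation-invariance of the super Daugavet property: for any weakly open neighborhood $V$ of $\tilde v_0$ in $B_Y$, the set $\overline{\theta}V$ is a non-empty relatively weakly open subset of $B_Y$, so super Daugavet of $y$ delivers $v'\in\overline{\theta}V$ with $\|y+v'\|>2-\varepsilon$, which is the same as $v:=\theta v'\in V$ with $\|y+\theta v\|>2-\varepsilon$ after a relabelling. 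The degenerate subcases $u_0=0$ (handled by the super Daugavet property of $y$ and Remark~\ref{remark:convexityargument} on the $Y$-coordinate) and $v_0=0$ (handled by part~(1)) will be disposed of first. In the generic subcase $\|u_0\|,\|v_0\|>0$, I will set $(u_\alpha,v_\alpha):=(\|u_0\|\tilde u_\alpha,\,\|v_0\|\tilde v_\alpha)\in B_{X\oplus_1 Y}$ and apply Remark~\ref{remark:convexityargument} separately on each coordinate, with coefficients $a,\|u_0\|$ on the $X$-side and $b,\|v_0\|$ on the $Y$-side, to obtain $\|ax+\theta u_\alpha\|>a+\|u_0\|-\varepsilon$ and $\|by+\theta v_\alpha\|>b+\|v_0\|-\varepsilon$, which add to $\|(ax,by)+\theta(u_\alpha,v_\alpha)\|>a+b+\|u_0\|+\|v_0\|-2\varepsilon=2-2\varepsilon$.
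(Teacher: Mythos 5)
Your proposal is correct and follows essentially the same route as the paper: normalize each non-zero coordinate, use Lemma~\ref{lemma:SAD_net_characterization} on the $X$-side to fix a single $\theta\in\T$ and a net, then use the rotation-invariance of the super Daugavet condition on the $Y$-side to match that $\theta$, rescale both nets by $\|u_0\|$ and $\|v_0\|$, and finish with Remark~\ref{remark:convexityargument} additively on each coordinate. The paper compresses the degenerate subcases $u_0=0$ or $v_0=0$ to a remark and leaves the $\theta$-coordination implicit, both of which you spell out, but the underlying argument is the same.
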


\begin{proof}
The first implication is straightforward, so we will only prove the second one. Assume that $x$ is super AD and that $y$ is super Daugavet, and pick $(u,v)\in B_{X\oplus_1 Y}$. We first suppose that $u,v\neq 0$. (If either $u=0$ or $v=0$, the proof is straightforward). Since $x$ is super AD, there exists $\theta\in \T$ and a net $(u_\alpha)$ in $B_X$ such that $(u_\alpha)$ converges weakly to $\frac{u}{\norm{u}}$ and $\norm{x+\theta u_\alpha}\to 2$. Since $y$ is super Daugavet, we can find a net $(v_\alpha)$ in $B_Y$ which converges weakly to $\frac{v}{\norm{v}}$ and such that $\norm{y+\theta v_\alpha}\to 2$. Then the net $(\norm{u}u_\alpha,\norm{v}v_\alpha)$ lives in $B_{X\oplus_1 Y}$, converges weakly to $(u,v)$, and is such that \begin{equation*}
     \bigl\|(ax,by)+\theta(\norm{u}u_\alpha,\norm{v}v_\alpha)\bigr\| =\norm{ax+\theta\norm{u}u_\alpha}+\norm{by+\theta\norm{v}v_\alpha}\to a+\norm{u}+b+\norm{v}=2
 \end{equation*}
(where we have used Remark~\ref{remark:convexityargument}).
Hence $(ax,by)$ is a super AD point in $X\oplus_1 Y$ by Lemma~\ref{lemma:SAD_net_characterization}. 
\end{proof}

Let us give a comment on the limitations of the above proof.

\begin{remark}
Since $\ell_1\equiv \ell_1 \oplus_1 \ell_1$, Example \ref{example:ellinftym-ell1Gamma} shows that, in general, it is not enough to assume that $x$ and $y$ are super AD points in order to get that $(ax,by)$ is a super AD point in $X\oplus_1 Y$. A simpler example is given by $\ell_1^2\equiv \K\oplus_1 \K$. 

Note that the reason why the above argument doesn't work in this context is that applying the super AD net characterization to $\{x,u\}$ and $\{y,v\}$ in the above proof can provide two different modulus one scalars $\theta,\omega\in\T$ for which the weakly convergent nets are far from the super AD points in the respective unit balls.
\end{remark}

In the other direction, things behave as we expect them to.

\begin{proposition}\label{prop: SAD points l_1 sum from sum to summands}
Let $X$ and $Y$ be Banach spaces, let $x\in S_X$ and $y\in S_Y$, and $a,b>0$ with $a+b=1$. Then, we have the following implications:
\begingroup \parskip=0pt
\begin{enumerate}
      \item If $(x,0)$ is a super AD point, then $x$ is a super AD point.
      \item If $(ax,by)$ is a super AD point, then $x$ and $y$ are super AD points. 
  \end{enumerate}  
  \endgroup
\end{proposition}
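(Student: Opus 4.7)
The plan is to apply the net characterization of super AD points given by Lemma~\ref{lemma:SAD_net_characterization}. I focus first on (2), since (1) will follow from a simpler version of the same argument.

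For (2), fix an arbitrary $u\in S_X$ and apply the super AD property of $(ax,by)$ to the element $(u,0)\in S_{X\oplus_1 Y}$: I obtain $\theta\in\T$ and a net $((u_\alpha,v_\alpha))$ in $B_{X\oplus_1 Y}$ converging weakly to $(u,0)$ with
\[
\norm{ax+\theta u_\alpha}+\norm{by+\theta v_\alpha}\longrightarrow 2.
\]
The key arithmetic step is to combine the triangle bounds $\norm{ax+\theta u_\alpha}\leq a+\norm{u_\alpha}$ and $\norm{by+\theta v_\alpha}\leq b+\norm{v_\alpha}$ with the $\ell_1$-constraint $\norm{u_\alpha}+\norm{v_\alpha}\leq 1$ and $a+b=1$: the upper bound on the sum is exactly $2$, so both triangle inequalities must be asymptotically tight and $\norm{u_\alpha}+\norm{v_\alpha}\to 1$. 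Since $u_\alpha\to u$ weakly with $\norm{u}=1$, weak lower semicontinuity of the norm gives $\norm{u_\alpha}\to 1$, $\norm{v_\alpha}\to 0$, and $\norm{ax+\theta u_\alpha}\to 1+a$.

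The last step is to upgrade this limit to $\norm{x+\theta\widetilde u_\alpha}\to 2$, where $\widetilde u_\alpha:=u_\alpha/\norm{u_\alpha}$; this quotient is well defined eventually and still converges weakly to $u$ because $\norm{u_\alpha}\to 1$. For this I pick Hahn--Banach functionals $f_\alpha\in S_{X^*}$ with $\re f_\alpha(ax+\theta u_\alpha)=\norm{ax+\theta u_\alpha}$. Writing this as $a\,\re f_\alpha(x)+\norm{u_\alpha}\re f_\alpha(\theta\widetilde u_\alpha)$ and using $\re f_\alpha(x)\leq 1$ and $\re f_\alpha(\theta\widetilde u_\alpha)\leq 1$, the saturation of the sum at $1+a$ forces $\re f_\alpha(x)\to 1$ and $\re f_\alpha(\theta\widetilde u_\alpha)\to 1$, whence $\norm{x+\theta\widetilde u_\alpha}\geq\re f_\alpha(x+\theta\widetilde u_\alpha)\to 2$. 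By Lemma~\ref{lemma:SAD_net_characterization}, $x$ is a super AD point, and the symmetric argument yields the same for $y$.

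For (1), applying the same reasoning to $u\in S_X$ at the super AD point $(x,0)$ produces a weakly convergent net $(u_\alpha,v_\alpha)\to(u,0)$ in $B_{X\oplus_1 Y}$ with $\norm{x+\theta u_\alpha}+\norm{v_\alpha}\to 2$; the asymptotic tightness discussion directly gives $\norm{x+\theta u_\alpha}\to 2$ without the need for any rescaling, so no Hahn--Banach step is required. I expect the main obstacle to be the last rescaling step of (2), where the limit $1+a$ of a norm of a weighted sum with weights $a$ and $\norm{u_\alpha}\to 1$ must be converted into a norm limit of $2$ for the unit-weight sum; the Hahn--Banach/strict-convexity trick at the sharp value $1+a$ resolves it cleanly.
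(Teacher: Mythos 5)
Your proof is correct, and it takes the direct route where the paper argues by contraposition. The paper assumes $x$ fails to be a super AD point, so that for the bad target $u\in S_X$ every net $(u_\alpha)$ weakly converging to $u$ has $\limsup_\alpha\norm{x+\theta u_\alpha}<2$ for every $\theta$; it then plugs the estimate $\norm{ax+\theta u_\alpha}\leq a\norm{x+\theta u_\alpha}+(1-a)\norm{u_\alpha}$ into the $\ell_1$-norm of the pair and gets $\limsup_\alpha\norm{(ax,by)+\theta(u_\alpha,v_\alpha)}<2$ outright, so no rescaling step is ever needed. Your version assumes $(ax,by)$ is super AD, extracts a good net $((u_\alpha,v_\alpha))$ weakly converging to $(u,0)$, squeezes out $\norm{u_\alpha}\to 1$, $\norm{v_\alpha}\to 0$, $\norm{ax+\theta u_\alpha}\to 1+a$, and then must pass from this mixed-weight limit to $\norm{x+\theta u_\alpha}\to 2$. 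That upgrade is exactly the extra cost of working forwards rather than by contradiction, and your Hahn--Banach trick is a valid way to pay it; but note that it can be done in one line without supporting functionals, via the reverse triangle inequality applied to $x+\theta u_\alpha=\tfrac{1}{a}(ax+\theta u_\alpha)-\tfrac{1-a}{a}\theta u_\alpha$, which gives
\[
\norm{x+\theta u_\alpha}\geq \tfrac{1}{a}\norm{ax+\theta u_\alpha}-\tfrac{1-a}{a}\norm{u_\alpha}\longrightarrow \tfrac{1+a}{a}-\tfrac{1-a}{a}=2,
\]
and moreover makes the normalization to $\widetilde u_\alpha$ unnecessary (the original net $(u_\alpha)\subset B_X$ already does the job in Lemma~\ref{lemma:SAD_net_characterization}). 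Your treatment of case (1) and the symmetric argument for $y$ are both fine.
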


\begin{proof}
Assume that $x$ is not super AD. Then there exists $u\in S_X$ such that for every net $(u_\alpha)$ in $B_X$ which converges weakly to $u$ and for every $\theta\in\T$, we have $\limsup\norm{x+\theta u_\alpha}<2$. Let $(u_\alpha,v_\alpha)$ be a net in $B_{X\oplus_1 Y}$ which converges weakly to $(u,0)$. Then, $(u_\alpha)$ converges weakly to $u$ in $B_X$. Therefore, by the weak lower semi-continuity of the norm, we have that $\liminf\norm{u_\alpha}\geq \norm{u}=1$. Hence $\norm{v_\alpha}\to 0$. Furthermore, by our assumption, we have that for every $\theta\in\T$, $\limsup\norm{x+\theta u_\alpha}<2$. So fix $\theta\in\T$. On one hand, we have \begin{equation*}
        \limsup \norm{(x,0)+\theta(u_\alpha,v_\alpha)}=\limsup (\norm{x+\theta u_\alpha}+\norm{v_\alpha})\leq \limsup\norm{x+\theta u_\alpha}+\limsup\norm{v_\alpha}<2,
    \end{equation*} 
    so $(x,0)$ is not a super AD point.
On the other hand, we have \begin{align*}
        \limsup \norm{(ax,by)+\theta(u_\alpha,v_\alpha)} &=\limsup (\norm{ax+\theta u_\alpha}+\norm{by+\theta v_\alpha}) \\
        &\leq (1-a)\limsup\norm{u_\alpha}+a\limsup\norm{x+\theta u_\alpha}+b+\limsup\norm{v_\alpha} \\
        &<(1-a)+2a+b=2(a+b)=2,
    \end{align*} so $(ax,by)$ is not a super AD point. 
\end{proof}

We now deal with super AD points in $\ell_\infty$-sums.

\begin{proposition}\label{prop:infty sum super AD+super AD gives super AD}
Let $X$ and $Y$ be Banach spaces, and let $x\in S_X$ and $y\in S_Y$. If $x$ and $y$ are super AD points, then $(x,y)$ is a super AD point in $X\oplus_{\infty}Y$.
\end{proposition}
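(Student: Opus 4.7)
The plan is to use the net characterization of super AD points provided by Lemma~\ref{lemma:SAD_net_characterization}. First, I would observe that $(x,y)\in S_{X\oplus_\infty Y}$ since $x$ and $y$ are unit vectors. To verify the defining property, I fix an arbitrary $(u,v)\in S_{X\oplus_\infty Y}$ and note that at least one of $\|u\|$ and $\|v\|$ equals one; by symmetry, I may assume $\|u\|=1$.

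Applying Lemma~\ref{lemma:SAD_net_characterization} to the super AD point $x$ and the unit vector $u$, I obtain $\theta\in\T$ and a net $(u_\alpha)$ in $B_X$ converging weakly to $u$ such that $\|x+\theta u_\alpha\|\to 2$. The natural approximating net in $X\oplus_\infty Y$ is then the constant-second-coordinate net $\bigl((u_\alpha,v)\bigr)$: it lies in $B_{X\oplus_\infty Y}$ since $\|u_\alpha\|\leq 1$ and $\|v\|\leq 1$, and it converges weakly to $(u,v)$, using the standard fact that $(X\oplus_\infty Y)^* = X^*\oplus_1 Y^*$, so weak convergence in the $\ell_\infty$-sum is coordinatewise. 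The key computation is then
\[
\|(x,y)+\theta(u_\alpha,v)\|_\infty = \max\bigl\{\|x+\theta u_\alpha\|, \|y+\theta v\|\bigr\}\geq \|x+\theta u_\alpha\|\to 2,
\]
which, combined with the trivial upper bound of $2$ coming from the triangle inequality, yields $\|(x,y)+\theta(u_\alpha,v)\|_\infty\to 2$. The converse direction of Lemma~\ref{lemma:SAD_net_characterization} then gives that $(x,y)$ is super AD in $X\oplus_\infty Y$.

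There is no substantial obstacle in this argument: the flexibility of the $\ell_\infty$-norm allows one to keep the coordinate with strictly smaller norm constant in the approximating net, so only the super AD property of the coordinate realizing the maximum is actually used. In particular, the two hypotheses on $x$ and $y$ are exploited in a symmetric but non-simultaneous way, handling respectively the cases $\|u\|=1$ and $\|v\|=1$. This stands in sharp contrast with the $\ell_1$-sum situation of Proposition~\ref{prop: SAD from summands to 1-sum}, where a single scalar $\theta$ must simultaneously work for both approximating nets, which is precisely what forced the stronger assumption that one of the two points be super Daugavet.
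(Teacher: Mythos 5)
Your proof is correct and follows essentially the same route as the paper's: both invoke the net characterization of Lemma~\ref{lemma:SAD_net_characterization}, split into the cases $\|u\|=1$ and $\|v\|=1$, and use the net $(u_\alpha,v)$ with constant second coordinate so that $\|(x,y)+\theta(u_\alpha,v)\|\geq\|x+\theta u_\alpha\|\to 2$. The only difference is that you spell out the weak convergence via the duality $(X\oplus_\infty Y)^*=X^*\oplus_1 Y^*$ and add a helpful remark contrasting with the $\ell_1$-sum case, which the paper leaves implicit.
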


\begin{proof}
Let $(u,v)\in S_{X\oplus_\infty Y}$. Then $\norm{u}=1$ or $\norm{v}=1$. In the first case, since $x$ is super AD, there exists a net $(u_\alpha)$ in $B_X$ and $\theta\in\T$ such that $(u_\alpha)$ converges weakly to $u$ and $\norm{u+\theta u_\alpha}\to 2$ by Lemma~\ref{lemma:SAD_net_characterization}. Then $(u_\alpha,v)$ converges weakly to $(u,v)$ in $B_{X\oplus_\infty Y}$ and $\norm{(u,v)+\theta(u_\alpha,v)}\geq \norm{u+\theta u_\alpha}\to 2$. The other case is analogous, using that $y$ is super AD. 
\end{proof}

When one of the spaces is infinite-dimensional, being super AD point in the $\ell_\infty$-sum is easier.

\begin{proposition}\label{proposition: infty sums X infinite dim x super AD means (x,y) super AD}
Let $X$ be an infinite-dimensional Banach space and let $Y$ be an arbitrary Banach space. If $x$ is a super AD point in $X$, then $(x,y)$ is a super AD point in $X\oplus_{\infty} Y$ for every $y\in B_Y$.
\end{proposition}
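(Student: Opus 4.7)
The plan is to use the net characterization of super AD points provided by Lemma~\ref{lemma:SAD_net_characterization}. Because $X$ is infinite-dimensional, so is $X\oplus_{\infty} Y$, and to verify that $(x,y)$ is a super AD point it suffices, by the ``moreover'' part of that lemma, to produce for each $(u,v)\in B_{X\oplus_{\infty}Y}$ a modulus one scalar $\theta\in\T$ and a net in $B_{X\oplus_{\infty}Y}$ which converges weakly to $(u,v)$ and along which $\|(x,y)+\theta\,\cdot\,\|$ tends to $2$.

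Fix then $(u,v)\in B_{X\oplus_{\infty}Y}$, so that in particular $u\in B_X$. Since $x$ is a super AD point of $X$ and $X$ is infinite-dimensional, the same lemma (again in its infinite-dimensional form, which is precisely what allows one to start from a point of $B_X$ rather than $S_X$) yields $\theta\in\T$ and a net $(u_\alpha)$ in $B_X$ with $u_\alpha\to u$ weakly and $\|x+\theta u_\alpha\|\to 2$.

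The natural candidate is the net $(u_\alpha, v)$: it clearly lies in $B_{X\oplus_{\infty}Y}$ because the $\ell_\infty$-norm gives $\|(u_\alpha,v)\|_{\infty}=\max\{\|u_\alpha\|,\|v\|\}\leq 1$, and it converges weakly to $(u,v)$ since the second coordinate is constant and coordinatewise weak convergence coincides with weak convergence in the $\ell_\infty$-sum. The key norm estimate is the elementary inequality
\[
\bigl\|(x,y)+\theta(u_\alpha,v)\bigr\|_{\infty}=\max\bigl\{\|x+\theta u_\alpha\|,\,\|y+\theta v\|\bigr\}\geq \|x+\theta u_\alpha\|\longrightarrow 2,
\]
which, combined with the trivial upper bound $2$, forces the limit to be exactly $2$. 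Invoking Lemma~\ref{lemma:SAD_net_characterization} once more gives the result.

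I do not expect any serious obstacle: the $\ell_\infty$-norm suppresses the second coordinate entirely, so the whole burden is carried by $x$. The only subtlety is that one must start from $u\in B_X$ (not necessarily $S_X$), which is exactly where the infinite-dimensionality hypothesis on $X$ is used, via the ``moreover'' clause of Lemma~\ref{lemma:SAD_net_characterization}; without it, an $(u,v)$ with $\|v\|=1$ and $\|u\|<1$ could not be handled by this construction.
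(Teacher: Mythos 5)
Your proof is correct and is essentially the same argument the paper gives. The paper simply states that the argument for Proposition~\ref{prop:infty sum super AD+super AD gives super AD} goes through without the case distinction, because the ``moreover'' clause of Lemma~\ref{lemma:SAD_net_characterization} applied to the infinite-dimensional $X$ produces the desired net $(u_\alpha)$ for arbitrary $u\in B_X$ and not only for $u\in S_X$; that is exactly the point you identified and exploited, together with the same $\ell_\infty$-norm estimate $\|(x,y)+\theta(u_\alpha,v)\|\geq\|x+\theta u_\alpha\|$.
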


\begin{proof}
The proof is analogous to the above one, but it is not required here to work by cases, because the assumption $\dim(X)=\infty$ allows to produce the desired net in $B_X$ for every $u\in B_X$ by the moreover part of Lemma~\ref{lemma:SAD_net_characterization}, and not merely for unit sphere elements. 
\end{proof}

For the converse result, we have the following two versions. The first one deals with the case when one of the factors is finite-dimensional.

\begin{proposition}\label{prop: infty sum (x,y) super AD and Y fin-dim, then x is super AD}
Let $X$ and $Y$ be Banach spaces. If $(x,y)\in S_{X\oplus_\infty Y}$ is a super AD point in $X\oplus_{\infty}Y$ and $Y$ is finite-dimensional, then $x$ is a super AD point (and, in particular, $\|x\|=1$).
\end{proposition}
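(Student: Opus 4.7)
The plan is to establish $\|x\|=1$ and the super AD property of $x$ simultaneously, using the net characterization of super AD points from Lemma~\ref{lemma:SAD_net_characterization}. The central idea is that in an $\ell_\infty$-sum one can isolate the $X$-component by probing with an element whose $Y$-coordinate is $0$, and finite-dimensionality of $Y$ then forces the $Y$-coordinate of any weakly convergent net to converge in norm, so it contributes nothing at the limit.

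Concretely, fix an arbitrary $u \in S_X$ and apply the super AD property of $(x,y)$ to the element $(u,0) \in S_{X\oplus_\infty Y}$. By Lemma~\ref{lemma:SAD_net_characterization}, this yields $\theta \in \T$ and a net $(u_\alpha, v_\alpha)$ in $B_{X\oplus_\infty Y}$ converging weakly to $(u,0)$ such that
\[
\max\bigl\{\norm{x+\theta u_\alpha},\, \norm{y+\theta v_\alpha}\bigr\} = \bigl\|(x,y) + \theta(u_\alpha, v_\alpha)\bigr\| \longrightarrow 2.
\]
Since $Y$ is finite-dimensional, weak convergence in $Y$ coincides with norm convergence, hence $v_\alpha \to 0$ in norm and $\norm{y+\theta v_\alpha} \to \norm{y} \leq 1 < 2$. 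This forces $\norm{x+\theta u_\alpha} \to 2$. Two conclusions follow: first, the inequality $\norm{x+\theta u_\alpha} \leq \norm{x}+1$ gives $\norm{x}\geq 1$, so $\norm{x}=1$; second, by weak-weak continuity of the canonical projection $X\oplus_\infty Y \to X$, the net $(u_\alpha)$ lies in $B_X$ and converges weakly to $u$ with $\norm{x+\theta u_\alpha}\to 2$. Since $u \in S_X$ was arbitrary, applying the reverse direction of Lemma~\ref{lemma:SAD_net_characterization} concludes that $x$ is a super AD point in $X$.

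The only delicate issue is the choice of probe $(u,0)$ rather than some $(u,y')$ with $y' \neq 0$. If the $Y$-coordinate of the probe were nonzero, then $v_\alpha$ would converge in norm to that nonzero vector and $\norm{y+\theta v_\alpha}$ could tend to $2$ on its own (for example when $\theta$ is close to the matching phase), leaving no way to extract the desired asymptotic behaviour of $\norm{x+\theta u_\alpha}$. Forcing the $Y$-coordinate to vanish in the limit is exactly what allows the finite-dimensionality hypothesis to transfer the asymptotic norming from the sum to the factor $X$.
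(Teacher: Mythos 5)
Your proof is correct and follows essentially the same approach as the paper's: probe the super AD point $(x,y)$ with elements of the form $(u,0)$, use finite-dimensionality of $Y$ to conclude that the $Y$-coordinate of a weakly convergent net converges in norm to $0$, and observe that the $\ell_\infty$-max is then asymptotically governed by the $X$-coordinate. The only difference is packaging: the paper argues by contraposition (assuming $x$ is not super AD and showing $(x,y)$ cannot be), while you argue directly via Lemma~\ref{lemma:SAD_net_characterization} in both directions, which also lets you extract $\|x\|=1$ cleanly along the way.
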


\begin{proof}
    Assume that $x$ is not a super AD point. Then there exits $u\in S_X$ such that for every net $(u_\alpha)$ in $B_X$ which converges weakly to $u$ and for every $\theta\in\T$, we have $\limsup_\alpha\norm{x+\theta u_\alpha}<2$. Pick a net $(u_\alpha,v_\alpha)$ in $B_{X\oplus_\infty Y}$ which converges weakly to $(u,0)$. Then $(u_\alpha)$ converges weakly to $u$, and $(v_\alpha)$ converges weakly to $0$. Since $\dim(Y)<\infty$, it follows that $\norm{v_\alpha}\to 0$. Furthermore, from the above, we have $\limsup_\alpha\norm{x+\theta u_\alpha}<2$ for every $\theta\in \T$. Hence 
    \begin{align*}
        \limsup_\alpha\norm{(x,y)+\theta(u_\alpha,v_\alpha)} & =\limsup_\alpha\max\{\norm{x+\theta u_\alpha},\norm{y+\theta v_\alpha}\} \\ &\leq \max\left\{\limsup_\alpha\norm{x+\theta u_\alpha},\norm{y}\right\}<2.
    \end{align*}
     Therefore, $(x,y)$ is not a super AD point in $X\oplus_\infty Y$.
\end{proof}

Note that as a consequence, we can obtain the description of the super AD points in $\ell_\infty^m$ which we already obtained in Example~\ref{example:ellinftym-ell1Gamma} using spear vectors: the set of super AD points in $\ell_\infty^m$ is equal to the set $\T^m$ for every $m\in \N$.

In the general case for the converse result, we obtain that if a pair is a super AD point of an $\ell_\infty$-sum, then at least one of the coordinate has to be a super AD point. The proof is analogous to the one given for Proposition~\ref{prop: infty sum (x,y) super AD and Y fin-dim, then x is super AD}.

\begin{proposition}\label{proposition: infty sum (x,y) is super AD, then either x or y is super AD}
Let $X$ and $Y$ be Banach spaces. If $(x,y)\in S_{X\oplus_\infty Y}$ is a super AD point in $X\oplus_{\infty}Y$, then $x$ is a super AD point in $X$ or $y$ is a super AD point in $Y$.
\end{proposition}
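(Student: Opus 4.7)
The plan is to argue by contraposition: assuming that $x$ is not a super AD point in $X$ and $y$ is not a super AD point in $Y$, I will build an element of $S_{X\oplus_\infty Y}$ witnessing that $(x,y)$ is not a super AD point in $X\oplus_\infty Y$, using the net characterization from Lemma~\ref{lemma:SAD_net_characterization}.

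First I extract witnesses from each factor. If $\|x\|=1$ and $x$ fails super AD, Lemma~\ref{lemma:SAD_net_characterization} furnishes a point $u_0\in S_X$ such that $\limsup_\alpha \|x+\theta u_\alpha\|<2$ for every $\theta\in\T$ and every net $(u_\alpha)\subset B_X$ weakly convergent to $u_0$. If instead $\|x\|<1$, I simply pick any $u_0\in S_X$; in this case $\limsup_\alpha \|x+\theta u_\alpha\|\leq \|x\|+1<2$ holds automatically. I do the same to produce a point $v_0\in S_Y$ with the analogous property. Since $(x,y)\in S_{X\oplus_\infty Y}$ forces $\max\{\|x\|,\|y\|\}=1$, at least one coordinate has norm one, so $(u_0,v_0)\in S_{X\oplus_\infty Y}$.

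Now let $(u_\alpha,v_\alpha)$ be any net in $B_{X\oplus_\infty Y}$ weakly convergent to $(u_0,v_0)$. Continuity of the coordinate projections in the weak topology gives $(u_\alpha)\subset B_X$ weakly convergent to $u_0$ and $(v_\alpha)\subset B_Y$ weakly convergent to $v_0$. For any $\theta\in\T$, the elementary net inequality $\limsup\max\{a_\alpha,b_\alpha\}\leq\max\{\limsup a_\alpha,\limsup b_\alpha\}$ (which holds since $\sup_{\beta\geq\alpha}\max\{a_\beta,b_\beta\}\leq\max\{\sup_{\beta\geq\alpha}a_\beta,\sup_{\beta\geq\alpha}b_\beta\}$) yields
\[
\limsup_\alpha \bigl\|(x,y)+\theta(u_\alpha,v_\alpha)\bigr\| \leq \max\bigl\{\limsup_\alpha\|x+\theta u_\alpha\|,\,\limsup_\alpha\|y+\theta v_\alpha\|\bigr\}<2,
\]
by the defining properties of $u_0$ and $v_0$. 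Since this holds for every net weakly convergent to $(u_0,v_0)$ and every $\theta\in\T$, Lemma~\ref{lemma:SAD_net_characterization} certifies that $(x,y)$ is not a super AD point in $X\oplus_\infty Y$, contradicting the hypothesis.

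There is no real obstacle here: the proof is a transparent adaptation of Proposition~\ref{prop: infty sum (x,y) super AD and Y fin-dim, then x is super AD}, the only difference being that the control on the second coordinate is supplied by the non-super-AD witness $v_0$ rather than by the norm convergence $v_\alpha\to 0$, which in the finite-dimensional case came for free from the weak convergence to zero.
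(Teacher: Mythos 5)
Your proof is correct and follows essentially the same route the paper indicates (the paper only says the argument is analogous to Proposition~\ref{prop: infty sum (x,y) super AD and Y fin-dim, then x is super AD}): argue by contraposition, extract a non-super-AD witness in each coordinate, form $(u_0,v_0)\in S_{X\oplus_\infty Y}$, and use the net characterization together with $\limsup\max\leq\max\limsup$ to show every net weakly convergent to $(u_0,v_0)$ keeps $\|(x,y)+\theta(\cdot,\cdot)\|$ bounded away from $2$. Your observation that the witness $v_0$ replaces the norm-null control $v_\alpha\to 0$ used in the finite-dimensional case is exactly the intended adaptation.
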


\subsection{Super AD points in spaces of integrable and of continuous functions}\label{subsec: Super AD points in C(K) and L_1(mu)}
As previously mentioned, in the classical function spaces $L_1(\mu)$ and $C(K)$, every point in the unit sphere is an AD point (since these spaces have the ADP), but these points do not always satisfy any stronger notion. In this subsection, we study in detail super AD points in these spaces and in their vector valued counterparts.

We start with the case of spaces of (vector-valued) integrable functions. Let $(S,\Sigma,\mu)$ be a measure space. Recall that a measurable set $A\subset S$ is called an \emph{atom} for $\mu$ if $\mu(A)>0$ and if $\mu(B)=0$ for every measurable subset $B\subset A$ such that $\mu(B)<\mu(A)$. It was proved in \cite[Theorem~4.8]{MPRZ} that Daugavet and super Daugavet points coincide in the space $L_1(\mu)$, and that these points are precisely the norm one functions in $L_1(\mu)$ whose support contain no atom. As we have already seen, the situation is different for super AD points, as $\ell_1$ contains points which are super AD but does not contain any  Daugavet point. Let $X$ be a Banach space. Recall that a measurable function $f$ from $S$ to $X$ is almost everywhere constant on every atom. We denote by $L_1(\mu,X)$  the Banach space of all $X$-valued $\mu$-integrable functions. We can decompose $L_1(\mu,X)$ as 
\begin{equation}\label{eq:Maharam}
    L_1(\mu,X)=L_1(\nu,X)\oplus_1 \ell_1(\Gamma,X),
\end{equation} 
where $\nu$ is the continuous part of $\mu$, $\Gamma$ is the set of all atoms for $\mu$ (up to a measure 0 set), and $\ell_1(\Gamma,X)=\left[\bigoplus_{\gamma\in\Gamma}X\right]_{\ell_1}$ stands for the $\ell_1$-sum of $\Gamma$ copies of the space $X$, see \cite[Theorem~2.1]{Johnson} for the scalar-valued case, the vector-valued case is a consequence of the latter result. The following result is an immediate corollary of \cite[Theorem~4.11]{MPRZ} together with the stability results under $\ell_1$-sums for Daugavet and super Daugavet points (see \cite[Section~4]{AHLP} and \cite[Section~3.2]{MPRZ}). 

\begin{proposition}\label{prop:Daugavet points in L_1(mu,X)}
Let $(S,\Sigma,\mu)$ be a measure space, let $X$ be a Banach space, and let $f\in S_{L_1(\mu,X)}$. Then, $f$ is a Daugavet (respectively, super Daugavet) point if and only if $f(s)=0$ or $\frac{f(s)}{\norm{f(s)}}$ is a Daugavet (respectively, super Daugavet) point almost everywhere on every atom.
\end{proposition}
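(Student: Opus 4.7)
The plan is to reduce the statement to a direct application of known results via the Maharam-type decomposition \eqref{eq:Maharam}: $L_1(\mu,X) = L_1(\nu,X) \oplus_1 \ell_1(\Gamma,X)$, where $\nu$ is the continuous (atomless) part of $\mu$ and $\Gamma$ is the set of atoms. Under this identification, $f \in S_{L_1(\mu,X)}$ corresponds to a pair $(g,h)$ with $g \in L_1(\nu,X)$ and $h = (\mu(\gamma)\, x_\gamma)_{\gamma \in \Gamma} \in \ell_1(\Gamma,X)$, where $x_\gamma \in X$ denotes the (essentially constant) value of $f$ on the atom $\gamma$. The condition ``$f(s)=0$ or $f(s)/\|f(s)\|$ is a (super) Daugavet point almost everywhere on every atom'' then translates to: for every $\gamma\in\Gamma$, either $x_\gamma = 0$ or $x_\gamma/\|x_\gamma\|$ is a (super) Daugavet point in $X$.

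Next, I would invoke the stability results for (super) Daugavet points under $\ell_1$-sums from \cite[Section~4]{AHLP} and \cite[Section~3.2]{MPRZ}, which in their form for arbitrary families assert that a norm-one element of an $\ell_1$-sum is a (super) Daugavet point if and only if each of its nonzero normalized components is a (super) Daugavet point in the corresponding summand. Applying this twice — first to the splitting $L_1(\nu,X) \oplus_1 \ell_1(\Gamma,X)$, and then to the further realization of $\ell_1(\Gamma,X)$ as the $\ell_1$-sum $\bigl[\bigoplus_{\gamma\in\Gamma} X\bigr]_{\ell_1}$ — reduces the question to analyzing $g/\|g\|$ in $L_1(\nu,X)$ (when $g \neq 0$) together with each $x_\gamma/\|x_\gamma\|$ in $X$ (when $x_\gamma \neq 0$).

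The final ingredient is \cite[Theorem~4.11]{MPRZ}, which states that when $\nu$ is atomless every element of $S_{L_1(\nu,X)}$ is a super Daugavet point, in particular a Daugavet point. This renders the condition on $g/\|g\|$ automatic, so the characterization collapses to the stated condition on the atomic coordinates. The only potential obstacle is purely bookkeeping: checking that the $\ell_1$-sum stability results apply in the required generality (arbitrary index set $\Gamma$, and a simultaneous two-step decomposition), and that the correspondence between $f$ and the pair $(g,h)$ faithfully transports the denting/weakly-open test sets used to define (super) Daugavet points; both points are standard and explicit in the references cited, which is why the result can legitimately be presented as an immediate corollary.
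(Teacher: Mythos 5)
Your proposal is correct and follows exactly the route the paper takes: it appeals to the Maharam--type decomposition \eqref{eq:Maharam}, the stability of Daugavet and super Daugavet points under $\ell_1$-sums from \cite[Section~4]{AHLP} and \cite[Section~3.2]{MPRZ}, and the fact from \cite[Theorem~4.11]{MPRZ} that every norm-one vector of $L_1(\nu,X)$ with $\nu$ atomless is a super Daugavet point. The paper presents this proposition as an immediate corollary of precisely those ingredients, and your elaboration of the identification $f\leftrightarrow(g,(\mu(\gamma)x_\gamma)_\gamma)$ and of how normalization interacts with it is just the unwound version of the same argument.
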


From the results from the previous subsection, we deduce in a similar way the following characterization for super AD points. 

\begin{proposition}\label{prop:super AD points in L_1(mu,X)}
Let $(S,\Sigma,\mu)$ be a measure space, let $X$ be a Banach space, and let $f\in S_{L_1(\mu,X)}$. Then, $f$ is a super AD point if and only if $f(s)=0$ or $\frac{f(s)}{\norm{f(s)}}$ is a super Daugavet point almost everywhere on all but one atom, where $\frac{f(s)}{\norm{f(s)}}$ is a super AD point.
\end{proposition}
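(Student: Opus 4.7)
The proof proceeds analogously to that of Proposition~\ref{prop:Daugavet points in L_1(mu,X)}, relying on the $\ell_1$-decomposition \eqref{eq:Maharam}, the stability results for super AD points under $\ell_1$-sums (Propositions~\ref{prop: SAD from summands to 1-sum} and~\ref{prop: SAD points l_1 sum from sum to summands}), and the classical fact that $L_1(\nu,X)$ has the Daugavet property whenever $\nu$ is non-atomic (so that every unit vector in $L_1(\nu,X)$ is automatically a super Daugavet point).

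For the ``if'' direction, if no exceptional atom is needed, then $f$ fulfills the hypothesis of Proposition~\ref{prop:Daugavet points in L_1(mu,X)}, so $f$ is super Daugavet and in particular super AD. Otherwise, denote the exceptional atom by $\gamma_0$ and decompose $L_1(\mu,X)=X_{\gamma_0}\oplus_1 Z$, where $X_{\gamma_0}\equiv X$ corresponds to $\gamma_0$ and $Z$ gathers the continuous part together with all other atoms. Writing $f=f_{\gamma_0}+f_Z$ accordingly, the hypothesis guarantees that $f_{\gamma_0}/\norm{f_{\gamma_0}}$ is a super AD point of $X$, while $f_Z$ is either zero or, by Proposition~\ref{prop:Daugavet points in L_1(mu,X)} applied inside $Z$, a super Daugavet point of $Z$. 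Applying Proposition~\ref{prop: SAD from summands to 1-sum} (item~(1) if $f_Z=0$ and item~(2) otherwise) concludes that $f$ is super AD in $L_1(\mu,X)$.

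For the ``only if'' direction, assume $f$ is super AD. Iteratively applying Proposition~\ref{prop: SAD points l_1 sum from sum to summands} to the natural $\ell_1$-decompositions of $L_1(\mu,X)$, we obtain that for every atom $\gamma$ with $f_\gamma\neq 0$ the normalization $f_\gamma/\norm{f_\gamma}$ is a super AD point of $X$, and likewise that the normalized continuous part of $f$ (if non-zero) is super AD in $L_1(\nu,X)$, hence super Daugavet in view of the Daugavet property of $L_1(\nu,X)$. It remains to show that at most one atom can have $f_\gamma/\norm{f_\gamma}$ failing to be super Daugavet in $X$. Suppose for contradiction that two distinct atoms $\gamma_1,\gamma_2$ both exhibit this failure. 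Applying Proposition~\ref{prop: SAD points l_1 sum from sum to summands} to the decomposition $L_1(\mu,X)=(X_{\gamma_1}\oplus_1 X_{\gamma_2})\oplus_1 Z'$, we extract a super AD point of $X_{\gamma_1}\oplus_1 X_{\gamma_2}\equiv X\oplus_1 X$ whose two normalized components are both super AD but not super Daugavet in $X$. To reach a contradiction, we invoke the following partial converse of Proposition~\ref{prop: SAD from summands to 1-sum}(2): \emph{if $(u,v)\in S_{X\oplus_1 Y}$ is super AD with $u,v\neq 0$, then at least one of $u/\norm{u}$ or $v/\norm{v}$ must be a super Daugavet point in its factor.}

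The main obstacle lies in establishing this partial converse. The strategy is to use the net characterization given by Lemma~\ref{lemma:SAD_net_characterization}: if neither $u/\norm{u}$ nor $v/\norm{v}$ is super Daugavet, there exist directions $x_0\in S_X$ and $y_0\in S_Y$ for which the sets of scalars $\theta\in\T$ admitting approximating nets witnessing the super AD property of $u/\norm{u}$ at $x_0$, respectively of $v/\norm{v}$ at $y_0$, are proper subsets of $\T$. A careful choice of $x_0$ and $y_0$ should force these two subsets to be disjoint, which then prevents the existence of a common $\theta$ simultaneously witnessing the super AD property of $(u,v)$ at the direction $(x_0,y_0)/2$ in $B_{X\oplus_1 Y}$, contradicting the super AD assumption on $(u,v)$.
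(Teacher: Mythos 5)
Your ``if'' direction is correct and follows the same route as the paper: split off the possible exceptional atom, observe that the remaining part (if nonzero) is super Daugavet by Proposition~\ref{prop:Daugavet points in L_1(mu,X)}, and invoke Proposition~\ref{prop: SAD from summands to 1-sum}. Your reduction of the ``only if'' direction is also well organised: iterating Proposition~\ref{prop: SAD points l_1 sum from sum to summands} shows that every nonzero component (continuous part and each atomic value, normalised) is super AD, and you correctly isolate that the missing step is the implication ``$(au,bv)$ super AD with $a,b>0$ forces at least one of $u,v$ to be super Daugavet,'' which is exactly what is needed to preclude two exceptional atoms. It is worth noting that the paper's proof is extremely terse at precisely this point (``using Propositions~\ref{prop: SAD from summands to 1-sum} and \ref{prop: SAD points l_1 sum from sum to summands}, we immediately infer\dots''), and, as stated, Propositions~\ref{prop: SAD from summands to 1-sum} and \ref{prop: SAD points l_1 sum from sum to summands} do not by themselves yield this ``at most one exceptional atom'' conclusion; Proposition~\ref{prop: SAD points l_1 sum from sum to summands}(2) only gives that \emph{both} components are super AD, not that one of them is super Daugavet. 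So you have correctly put your finger on the non-trivial step.

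However, you do not actually carry out this step; you only sketch a strategy and explicitly call it ``the main obstacle,'' so the proposal is incomplete exactly where the paper is also cryptic. Moreover, the sketched strategy is questionable. If $u/\norm{u}$ fails to be super Daugavet, one obtains a direction $x_0\in S_X$ with a relatively weakly open neighbourhood $W_X$ on which $\sup_{w\in W_X}\norm{u/\norm{u}+w}<2$, and the set
\[
G_X\,:=\,\left\{\theta\in\T\,:\,\sup_{w\in W_X}\left\|\tfrac{u}{\norm{u}}+\theta w\right\|=2\right\}
\]
is a closed, non-empty subset of $\T$ avoiding a neighbourhood of $1$ (non-empty since $u/\norm{u}$ is super AD). You want to choose $x_0,y_0$ making $G_X$ and $G_Y$ disjoint (and hence making $G_X\cap\psi G_Y$ empty for some phase $\psi$). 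But replacing $x_0$ by $\omega x_0$ only rotates $G_X$ (indeed $G_X(\omega x_0)=\overline{\omega}\,G_X(x_0)$), so rotations do not shrink these sets, and in the complex case there is no a priori reason why $G_X$ and $G_Y$ could not both be, say, the complement of a small arc around $1$; then $G_X\cdot\overline{G_Y}=\T$ and no relative phase $\psi$ makes $G_X\cap\psi G_Y$ empty. Thus the disjointness you invoke is not justified and can plausibly fail as stated. To close the gap one needs either a genuinely sharper structural fact about these $\theta$-sets for super AD non-super-Daugavet points, or an entirely different argument for the converse of Proposition~\ref{prop: SAD from summands to 1-sum}(2); neither appears in your proposal.
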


\begin{proof}
Observe that the function $f$ can be seen as the element $(f_c,(f_\gamma)_{\gamma\in \Gamma})$ in the decomposition \eqref{eq:Maharam}, where $f_c$ corresponds to the restriction of the function $f$ to the continuous part of $S$ and $f_\gamma$ corresponds to the a.e.\ value of $f$ on the atom $\gamma$ for every $\gamma\in\Gamma$. Note that since $\nu$ is atomless, we have that either $f_c=0$ or $\frac{f_c}{\norm{f_c}}$ is a super Daugavet point in $L_1(\nu,X)$. Therefore, using Propositions~\ref{prop: SAD from summands to 1-sum} and \ref{prop: SAD points l_1 sum from sum to summands}, we immediately infer that $f$ is a super AD point if and only $f_\gamma=0$ or $\frac{f_\gamma}{\norm{f_\gamma}}$ is a super Daugavet point for all $\gamma\in\Gamma$ except one $\gamma_0\in \Gamma$ for which $\frac{f_{\gamma_0}}{\norm{f_{\gamma_0}}}$ is a super AD point. 
\end{proof}

In the scalar-valued case, as $X=\K$ contains no (super) Daugavet points, we get the following result.

\begin{corollary}
Let $(S,\Sigma,\mu)$ be a measure space and let $f\in S_{L_1(\mu)}$. Then, $f$ is a super AD point if and only if $f$ vanishes almost everywhere on all but one atom.
\end{corollary}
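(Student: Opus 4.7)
The plan is to derive this as an immediate specialisation of Proposition~\ref{prop:super AD points in L_1(mu,X)} to the scalar case $X=\K$. That proposition already separates the role played by the continuous part of $\mu$ from the behaviour on atoms, so the work here is reduced to interpreting its two pointwise conditions when the target space is one-dimensional.

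First, I would apply Proposition~\ref{prop:super AD points in L_1(mu,X)} with $X=\K$, obtaining that $f\in S_{L_1(\mu)}$ is a super AD point if and only if, on all atoms except possibly one exceptional atom $\gamma_0$, either $f=0$ almost everywhere or $f/\norm{f}$ is a super Daugavet point in $\K$, while on $\gamma_0$ the value $f/\norm{f}$ is a super AD point in $\K$.

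Next, I would eliminate the super Daugavet branch. Since $\K$ is finite-dimensional, it contains no Daugavet point (as recalled at the end of Section~\ref{section:preliminaries}), and hence no super Daugavet point either. Thus on each non-exceptional atom the only surviving possibility is $f=0$ almost everywhere. I would then observe that the condition on the exceptional atom is vacuous in $\K$: by Proposition~\ref{prop: K has the super ADP} the space $\K$ has the super ADP, so every element of $S_\K$ is a super AD point. Combining these two observations yields exactly the statement of the corollary.

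There is no real obstacle here; the proof is a direct bookkeeping argument. The only subtle point worth noting is that no constraint is placed on the restriction of $f$ to the continuous part of $\mu$: this is already built into Proposition~\ref{prop:super AD points in L_1(mu,X)} through the fact that, whenever $\nu$ is atomless, $L_1(\nu)$ has the Daugavet property, so any non-zero $f_c$ automatically gives a super Daugavet point in the continuous factor of the decomposition~\eqref{eq:Maharam}.
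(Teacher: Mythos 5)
Your argument is correct and coincides with the paper's, which derives the corollary in one line from Proposition~\ref{prop:super AD points in L_1(mu,X)} precisely by noting that $\K$ contains no super Daugavet points. Your additional observations (that the condition on the exceptional atom is vacuous since $\K$ has the super ADP, and that the continuous part imposes no constraint because $L_1(\nu)$ has the Daugavet property for atomless $\nu$) are exactly the bookkeeping the paper leaves implicit.
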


Note that this later result also allows to give a concrete example of a super AD point which is not a $\nabla$-point. 

\begin{example}\label{example:superAD-no-nabla}
Let $(S,\Sigma,\mu)$ be a measure space which has non-trivial continuous and atomic parts, then the space $L_1(\mu)$ contain super AD points which are not $\nabla$-points. Indeed, take any norm one function $f$ whose support contains a non-trivial continuous part and a single atom. On the one hand, $f$ is a super AD point by the above corollary. On the other hand, in the real case, by \cite[Proposition 3.7]{HLPV23} $\nabla$-points in $L_1(\mu)$ are either Daugavet points (i.e.\ elements whose support contains no atoms \cite[Theorem~4.8]{MPRZ}),  or elements of the form  $f =\theta\mu(A)^{-1}\chi_A$ with $\theta\in\{-1,1\}$ and $A$ is an atom for $\mu$. In the complex case, $\nabla$-points are Daugavet points \cite[Proposition 2.2]{LRT2024}. 
\end{example}

Now we move to the context of $C(K)$-spaces. Let $K$ be a infinite compact Hausdorff space. From \cite[Corollary~4.3]{MPRZ}, \cite[Theorem 4.2]{MPRZ}, and \cite[Theorem 3.4]{AHLP} we get that the notions of Daugavet, super Daugavet and ccs Daugavet points are equivalent in $C(K)$ and that a function $f$ belong to this class if and only if it attains its norm at a cluster point of $K$. We first prove that, in this setting, super AD points also fall in the same category. 

\begin{proposition}\label{prop: super AD points coincide with other notions in C(K)}
Let $K$ be a infinite compact Hausdorff space. If $f\in S_{C(K)}$ does not attain its norm at a cluster point of $K$, then $f$ is not a super AD point.
\begin{proof}
Let $H:= \{t \in K\colon \abs{f(t)}=1\}$. By assumption, every $t\in H$ is an isolated point of $K$, so we can find $\delta\in(0,1)$ such that $\abs{f(t)}\leq 1-\delta$ whenever $t\in K\setminus H$ (otherwise we could construct some $t\in H$ as a cluster point of elements from $K\setminus H$). Furthermore, the singleton $\{t\}$ is an open subset of $K$ for every $t\in H$ so, by compactness, it follows that the set $H$ is finite. Hence, the set
 \begin{equation*}
       W:= \{ g\in B_{C(K)} \colon \abs{g(t)}<\delta, \; t\in H\}
\end{equation*}
is a relatively weakly open subset of $B_{C(K)}$. Take any $g\in W$ and $\theta \in \mathbb{T}$. Notice that, if $t\in H$, we have
\begin{equation*}
    \abs{f(t)+\theta g(t)}\leq \abs{f(t)}+\abs{g(t)}< 1 +\delta <2.
\end{equation*} On the other hand, if $t\in K\setminus{H}$:
  \begin{equation*}
    \abs{f(t)+\theta g(t)}\leq \abs{f(t)}+\abs{g(t)} < 1-\delta +1 = 2-\delta < 2.
   \end{equation*}
    Therefore $\norm{f+\theta g} < 2$, and $f$ is not a super AD point. 
    \end{proof}
\end{proposition}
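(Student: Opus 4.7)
The plan is to produce, given such an $f$, a non-empty relatively weakly open subset $W$ of $B_{C(K)}$ intersecting $S_{C(K)}$ and a uniform upper bound strictly less than $2$ for $\max_{\theta\in\T}\|f+\theta g\|$ over all $g\in W$. The natural candidate for $W$ is one that forces test functions to be small precisely on the set where $|f|$ equals $1$, so that both at those points (because $|g|$ is small there) and outside them (because $|f|$ is bounded away from $1$ there) the triangle inequality gives a bound of the form $1+\text{(small)}$ or $(1-\text{small})+1$.

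The first step is to set $H:=\{t\in K\colon |f(t)|=1\}$ and observe that, by the hypothesis, every point of $H$ is isolated in $K$. Since $H$ is closed in the compact space $K$ and consists of isolated points, I would conclude that $H$ is finite. The second step is to use compactness of $K\setminus\bigcup_{t\in H}\{t\}$ (the complement of an open set containing $H$) together with continuity of $|f|$ to extract a single $\delta\in(0,1)$ with $|f(t)|\leq 1-\delta$ on $K\setminus H$; the point is that otherwise one could build a sequence in $K\setminus H$ whose cluster point, by compactness, would lie in $H$, contradicting isolation.

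Since $H$ is finite, the set
\[
W:=\{g\in B_{C(K)}\colon |g(t)|<\delta\text{ for every }t\in H\}
\]
is relatively weakly open in $B_{C(K)}$, as it is defined by finitely many point evaluations, which are continuous linear functionals. That $W$ intersects $S_{C(K)}$ follows from $K$ being infinite (one can find a unit vector supported off $H$ by Urysohn). Then for any $g\in W$ and any $\theta\in\T$, a two-case estimate (at points $t\in H$ versus $t\in K\setminus H$) gives $|f(t)+\theta g(t)|<\max\{1+\delta,\,2-\delta\}<2$, so $\|f+\theta g\|<2$ uniformly, which precisely denies $f$ the super AD property.

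No step looks like a serious obstacle: the only subtle point is the passage from ``every point of $H$ is isolated'' to $H$ being finite and to the uniform gap $\delta$, which is a standard compactness argument. Everything else is a direct verification against Definition~\ref{def: SAD}.
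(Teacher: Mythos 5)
Your proof is essentially identical to the paper's: same set $H$, same compactness argument showing $H$ is finite and yielding the uniform gap $\delta$, same relatively weakly open set $W$ defined by the finitely many evaluation functionals at points of $H$, and the same two-case triangle-inequality estimate. If anything you are slightly more careful than the paper, explicitly noting that $H$ is closed (needed for the compactness argument) and that $W$ meets $S_{C(K)}$ (needed to match Definition~\ref{def: SAD}), both of which the paper leaves implicit.
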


Let $K$ be a infinite compact Hausdorff space and let $X$ be an infinite-dimensional Banach space, and let $C(K,X)$ be the space of all continuous $X$ valued functions on $K$ endowed with the supremum norm. Let $f\in S_{C(K,X)}$. By \cite[Theorem 4.2]{MPRZ}, if the norm of $f$ is attained at a cluster point of $K$, then $f$ is a ccs Daugavet point so, in particular, a super AD point. Recall that if $t_0\in K$ is an isolated point of $K$, we obtain the decomposition $C(K,X) = C(K\setminus \{t_0\}, X)\oplus_{\infty} X$, where the isometry is given by the mapping $f \mapsto (f|_{K\setminus\{t_0\}},f(t_0))$. For functions attaining their norm only at isolated points of $K$, we have the following characterization of super AD points.

\begin{proposition}\label{prop: super AD in C(K,X)}
Let $K$ be a infinite compact Hausdorff space and let $X$ be an infinite-dimensional Banach space.  Assume that $f\in S_{C(K,X)}$ does not attain its norm at a cluster point of $K$. Then, $f$ is a super AD point if and only if $f(t_0)$ is a super AD point for some isolated point $t_0\in K$.
\end{proposition}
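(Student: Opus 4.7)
The plan is to reduce the proposition to the stability results for super AD points under $\ell_\infty$-sums from Subsection~\ref{subsec: SAD in absolute sums}. The first step is to isolate the ``norm-attaining set'' of $f$. Set
\[
H := \{t \in K \colon \|f(t)\| = 1\}.
\]
Since $f$ does not attain its norm on any cluster point of $K$, every $t \in H$ is isolated, so $\{t\}$ is open. If $H$ were infinite, one could extract a sequence of distinct elements $t_n \in H$ converging to some $t_0 \in K$; by continuity $\|f(t_0)\|=1$, so $t_0 \in H$ would be isolated, which contradicts $t_n \to t_0$. Hence $H=\{t_1,\dots,t_m\}$ is finite. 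A similar compactness argument shows the existence of $\delta \in (0,1)$ such that $\|f(t)\| \leq 1 - \delta$ for every $t \in K \setminus H$ (otherwise a sequence in $K\setminus H$ with norms going to $1$ would cluster at a point of $H$, again impossible as $H$ consists of isolated points).

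Since $H$ is a finite set of isolated points, $K\setminus H$ is a clopen compact subset of $K$, and we have the isometric identification
\[
C(K,X) \;\cong\; C(K\setminus H, X) \oplus_\infty X \oplus_\infty \cdots \oplus_\infty X \quad (m \text{ copies of } X),
\]
given by $g \mapsto \bigl(g|_{K\setminus H},\,g(t_1),\dots,g(t_m)\bigr)$. Under this identification, $f$ corresponds to $\bigl(f|_{K\setminus H},f(t_1),\dots,f(t_m)\bigr)$, where $\|f(t_i)\|=1$ for all $i$ and $\|f|_{K\setminus H}\| \leq 1 - \delta < 1$.

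For the forward implication, assume $f$ is a super AD point. Iterating Proposition~\ref{proposition: infty sum (x,y) is super AD, then either x or y is super AD} on the decomposition above (viewing it as successive $\ell_\infty$-sums of two factors), one of the coordinates must be super AD in its factor. However, super AD points must lie on the unit sphere, so $f|_{K\setminus H}$ cannot be super AD in $C(K\setminus H,X)$. Therefore, some $f(t_{i_0})$ must be a super AD point of $X$.

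For the converse, suppose that $f(t_{i_0})$ is a super AD point of $X$ for some $i_0$. Since $X$ is infinite-dimensional, Proposition~\ref{proposition: infty sums X infinite dim x super AD means (x,y) super AD} applies: putting the super AD coordinate as the first factor and letting the rest form an element $y$ of the closed unit ball of the remaining $\ell_\infty$-sum $X^{m-1} \oplus_\infty C(K\setminus H,X)$ (which has norm $1$), we conclude that $f$ is a super AD point of $C(K,X)$. The main obstacle is really only the first step, namely checking that $H$ is finite and separated from the rest by a positive norm gap; once the $\ell_\infty$-decomposition is in place, the previously established stability results do the rest.
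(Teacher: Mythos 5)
Your proof is correct and follows essentially the same route as the paper: identify the finite set $H$ of norm-attaining points, observe a uniform norm gap on $K\setminus H$, decompose $C(K,X)$ as an $\ell_\infty$-sum peeling off the isolated points of $H$, and then invoke Propositions~\ref{proposition: infty sums X infinite dim x super AD means (x,y) super AD} and~\ref{proposition: infty sum (x,y) is super AD, then either x or y is super AD}. The only cosmetic difference is the converse: the paper simply uses the two-factor decomposition $C(K,X)\cong C(K\setminus\{t_0\},X)\oplus_\infty X$ for the given isolated point $t_0$, whereas you reorder the full $m$-factor decomposition; both work, but you should note explicitly that any $t_0$ with $f(t_0)$ super AD must already lie in $H$ (since super AD points belong to the unit sphere), which is what makes your indexing by $H$ legitimate.
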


\begin{proof}
 Observe that for every isolated point $t_0$ in $K$, we can identify the function $f$ with the element $(f|_{K\setminus\{t_0\}}, f(t_0))$ in the above decomposition. So from Proposition~\ref{proposition: infty sums X infinite dim x super AD means (x,y) super AD}, we immediately get that if $f(t_0)$ is a super AD point, then $f$ is also a super AD point.
  For the other direction note that, as in the previous proof, we can find $\delta\in(0,1)$ such that $\norm{f(t)}\leq 1-\delta$ whenever $t\in K\setminus H$, and we have that the set $H:=\{t\in K\colon \norm{f(t)}=1\}$ is finite. Iterating the above decomposition, we can write 
    \begin{equation*}
        C(K,X) = (C(K\setminus H, X)\oplus_{\infty} Y ,
    \end{equation*}
    where $Y = \left[\bigoplus_{t\in H} X\right]_{\ell_{\infty}}$. In particular, $f$ can be identified with the element $(f|_{K\setminus H},(f(t))_{t\in H})$ in this space. Since, $\norm{f|_{K\setminus H}}<1$, it follows from Proposition \ref{proposition: infty sum (x,y) is super AD, then either x or y is super AD} that $(f(t))_{t\in H}$ is a super AD point in $Y$. Since $H$ is finite, we get from the same proposition that $f(t_0)$ must be a super AD point for some $t_0\in H$.
\end{proof}

\subsection{Characterizing the super ADP in spaces of integrable and of continuous functions}\label{subsec:exmples of super AD spaces}
Our final goal in the paper is to apply our previous results about super AD points to characterize the super ADP for $\ell_1$- and $\ell_\infty$-sum of spaces and for vector-valued continuous or integrable function spaces. 

First, combining Propositions~\ref{prop: SAD from summands to 1-sum} and \ref{prop: SAD points l_1 sum from sum to summands}, we get the following.

\begin{proposition}\label{proposition: super ADP in ell1sums}
Let $X$ and $Y$ be Banach spaces. The space $X\oplus_1 Y$ has the super ADP if and only if $X$ has the DP and $Y$ has the super ADP (or vice versa).
\end{proposition}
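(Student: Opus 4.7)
The plan is to combine Propositions~\ref{prop: SAD from summands to 1-sum} and~\ref{prop: SAD points l_1 sum from sum to summands} with an ad hoc contradiction argument for the deepest implication.

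For the sufficiency, assume without loss of generality that $X$ has the DP and $Y$ has the super ADP, and take $(x,y) \in S_{X \oplus_1 Y}$. If $x = 0$ or $y = 0$, Proposition~\ref{prop: SAD from summands to 1-sum}(1) applied in the appropriate factor does the job. Otherwise, writing $(x,y) = (\|x\|\, \hat x, \|y\|\, \hat y)$, the DP of $X$ makes $\hat x$ super Daugavet and the super ADP of $Y$ makes $\hat y$ super AD, so Proposition~\ref{prop: SAD from summands to 1-sum}(2)---applied after exchanging the roles of $X$ and $Y$ via the canonical isometry $X \oplus_1 Y \equiv Y \oplus_1 X$---shows that $(x,y)$ is super AD.

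For the necessity, applying Proposition~\ref{prop: SAD points l_1 sum from sum to summands}(1) to every $(x, 0)$ with $x \in S_X$ (and symmetrically for $Y$) already yields that both $X$ and $Y$ have the super ADP. The remaining task---showing that one of them has the DP---I handle by contradiction: assume there exist $x_0 \in S_X$ not super Daugavet, witnessed by some $u_0 \in S_X$, a weak neighborhood $W_X$ of $u_0$, and $\delta > 0$ with $\|x_0 + u\| \leq 2 - \delta$ on $W_X$, and similarly $y_0 \in S_Y$ not super Daugavet with witnesses $v_0, W_Y, \delta'$. The midpoint $(x_0/2, y_0/2) \in S_{X \oplus_1 Y}$ must be super AD, so at the test direction $(u_0/2, -v_0/2) \in S_{X \oplus_1 Y}$ there exist $\eta \in \T$ and a net $(u_\alpha, v_\alpha)$ in $B_{X \oplus_1 Y}$ converging weakly to $(u_0/2, -v_0/2)$ with $\|(x_0/2, y_0/2) + \eta(u_\alpha, v_\alpha)\| \to 2$.

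The critical analysis of this net proceeds as follows. The $\ell_1$ decomposition, together with the triangle inequalities $\|x_0/2 + \eta u_\alpha\| + \|y_0/2 + \eta v_\alpha\| \leq 1 + \|u_\alpha\| + \|v_\alpha\| \leq 2$, forces asymptotic equalities: one gets $\|u_\alpha\|, \|v_\alpha\| \to 1/2$ and, via a standard norming-functional argument, $\|x_0 + \eta \hat u_\alpha\| \to 2$ with $\hat u_\alpha := u_\alpha/\|u_\alpha\| \to u_0$ weakly, and $\|y_0 + \eta \hat v_\alpha\| \to 2$ with $\hat v_\alpha := v_\alpha/\|v_\alpha\| \to -v_0$ weakly. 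Since $\hat u_\alpha$ eventually lies in $W_X$, the non-super-Daugavet hypothesis at $u_0$ rules out $\eta = 1$; in the real case $\eta$ is then pinned to $-1$, and setting $\tilde v_\alpha := -\hat v_\alpha \to v_0$ weakly gives $\|y_0 + \tilde v_\alpha\| = \|y_0 - \hat v_\alpha\| \to 2$, contradicting the non-super-Daugavet hypothesis at $v_0$. The main obstacle is the complex case, in which $\eta$ is only constrained to avoid some open neighborhood of $1$ rather than being forced to equal $-1$; to deal with it I would run the same analysis with the full family of test directions $(u_0/2, \omega v_0/2)$ for $\omega \in \T$ and exploit the closedness of the resulting sets of admissible rotations on each side---both of which, by the non-super-Daugavet hypotheses, avoid open arcs around $1$---to obtain a contradiction via an arc-length comparison on $\T$.
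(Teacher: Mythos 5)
Your structural analysis is correct as far as it goes: sufficiency follows from Proposition~\ref{prop: SAD from summands to 1-sum}, and Proposition~\ref{prop: SAD points l_1 sum from sum to summands} only gives that both $X$ and $Y$ have the super ADP, so there is indeed an extra step needed to show that one of them must in fact have the DP. The paper glosses over this step with the phrase ``combining'' the two propositions; it does not write out the argument. Your real-case contradiction argument for that missing step is correct: testing at $(u_0/2,-v_0/2)$ pins $\eta=-1$, and then $-\hat v_\alpha\to v_0$ with $\norm{y_0-\hat v_\alpha}\to 2$ clashes directly with the non-super-Daugavet witness at $v_0$.

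The complex case, however, contains a genuine gap, and ``arc-length comparison'' will not close it. From the test direction $(u_0/2,\omega v_0/2)$ the only constraints you obtain are $\abs{\eta_\omega-1}\ge\delta$ and $\abs{\omega\eta_\omega-1}\ge\delta'$. Geometrically, $\eta_\omega$ must avoid an open arc of chordal radius $\delta$ around $1$ and an open arc of chordal radius $\delta'$ around $\bar\omega$. The constants $\delta,\delta'$ come from one failure of super-Daugavetness each and can be arbitrarily small, so the two forbidden arcs need not come close to covering $\T$; for every $\omega$ there is then an admissible $\eta_\omega$ (e.g.\ take $\omega=-1$, $\eta_{-1}=i$, which satisfies both constraints as soon as $\delta,\delta'\le\sqrt 2$), and no contradiction is reached. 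To make this approach work one would need a much finer structural fact about the weakly closed set $G:=\bigcap_{\varepsilon>0}\overline{\{u\in B_X:\norm{x_0+u}>2-\varepsilon\}}^{\,w}$: for instance that one can always find $u_0\in S_X\setminus G$ for which $\{\theta\in\T:\theta u_0\in G\}$ is a singleton (this does happen in concrete examples such as $Z\oplus_1\K$ with $Z$ Daugavet, where $G$ is a ``half-ball,'' and then the argument closes exactly as in the real case), but the bare super-ADP-without-DP hypothesis does not obviously yield that, and your sketch does not supply it.
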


In particular, let us note that this result also provides a simple way of renorming spaces with the Daugavet property to obtain the super ADP without the DP which is arguably simpler than the one provided in Theorem~\ref{theorem: renorming having SAD but not the DP}.

\begin{example}
Let $X$ be a Banach space with the Daugavet property. Take a one-codimensional subspace $Y$ of $X$, which also has the Daugavet property \cite[Theorem~2.14]{kssw}. Then, the space $Y\oplus_1 \K$ is isomorphic to $X$, has the super ADP by Proposition~\ref{proposition: super ADP in ell1sums}, but it does not have the Daugavet property since $(0,1)$ is clearly a denting point of the unit ball of $Y\oplus_1 \K$. 
\end{example}

For $L_1(\mu,X)$ spaces, the following result is obtained by combining Propositions~\ref{prop:Daugavet points in L_1(mu,X)} and \ref{prop:super AD points in L_1(mu,X)}.

\begin{theorem}\label{theorem:characterizing-superADPL1muX}
Let $(S,\Sigma,\mu)$ be a measure space and let $X$ be a Banach space.    The space $L_1(\mu,X)$ has the super ADP if and only if one of the following three conditions is satisfied.  
\begingroup \parskip=0pt
\begin{enumerate}
        \item $\mu$ is atomless;
        \item $X$ has the DP;
        \item $\mu$ has exactly one atom and $X$ has the super ADP. 
    \end{enumerate} 
    \endgroup
    In particular, in order for $L_1(\mu,X)$ to have the super ADP but not the DP, then $\mu$ must have exactly one atom and $X$ itself must have the super ADP and fail the DP.
\end{theorem}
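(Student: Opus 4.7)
The plan is to combine the decomposition \eqref{eq:Maharam} with the stability result Proposition~\ref{proposition: super ADP in ell1sums} and the pointwise characterizations Propositions~\ref{prop:Daugavet points in L_1(mu,X)} and \ref{prop:super AD points in L_1(mu,X)}. As a preliminary observation, I first note that whenever $\nu$ is atomless, $L_1(\nu,X)$ has the DP for every Banach space $X$: by Proposition~\ref{prop:Daugavet points in L_1(mu,X)}, every $f \in S_{L_1(\nu,X)}$ is automatically a Daugavet point since there are no atoms to check. Similarly, if $X$ has the DP, then the same characterization forces $L_1(\mu,X)$ to have the DP for every $\mu$.

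For the sufficiency direction, conditions (1) and (2) immediately yield the DP, hence the super ADP. For (3), I write $L_1(\mu,X) = L_1(\nu,X) \oplus_1 X$, where the second summand corresponds to the unique atom of $\mu$. The subcase $\nu = 0$ is trivial since then $L_1(\mu,X) = X$; otherwise $L_1(\nu,X)$ has the DP by the preliminary observation while $X$ has the super ADP by assumption, so Proposition~\ref{proposition: super ADP in ell1sums} yields the super ADP of the sum.

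For the necessity, suppose $L_1(\mu,X)$ has the super ADP. If $\mu$ is atomless, we are in case (1). Otherwise, let $\Gamma\neq\emptyset$ denote the set of atoms of $\mu$. For any $x \in S_X$ and any $\gamma_0 \in \Gamma$, the function $\mu(\gamma_0)^{-1}\chi_{\gamma_0}\otimes x$ lies in $S_{L_1(\mu,X)}$ and is supported on the single atom $\gamma_0$; Proposition~\ref{prop:super AD points in L_1(mu,X)} then forces $x$ to be a super AD point of $X$, so $X$ has the super ADP. If $|\Gamma| = 1$, this places us in case (3). Otherwise $|\Gamma|\geq 2$, and I upgrade from super ADP to DP as follows: picking distinct atoms $\gamma_1,\gamma_2\in\Gamma$ and $x\in S_X$, consider
\begin{equation*}
f := \frac{x\,\chi_{\gamma_1}}{2\mu(\gamma_1)} + \frac{x\,\chi_{\gamma_2}}{2\mu(\gamma_2)} \in S_{L_1(\mu,X)}.
\end{equation*}
Since $f$ is nonzero on both atoms with normalized value equal to $x$ on each, and since Proposition~\ref{prop:super AD points in L_1(mu,X)} requires the normalized value to be a super Daugavet point on all but at most one atom of the support of $f$, the point $x$ must be super Daugavet. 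As $x\in S_X$ was arbitrary, the Shvydkoy characterization of the DP forces $X$ to have the DP, placing us in case (2).

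The ``in particular'' part follows from the preliminary observation: if $L_1(\mu,X)$ has the super ADP but not the DP, then both (1) and (2) must fail, leaving only case (3) with $X$ itself failing the DP but satisfying the super ADP. The main technical point is the upgrade from super AD to super Daugavet in the two-atom subcase of the necessity, which exploits in a crucial way the ``all but one atom'' asymmetry built into Proposition~\ref{prop:super AD points in L_1(mu,X)}; the rest of the argument reduces to carefully bookkeeping how the decomposition \eqref{eq:Maharam} interacts with the two pointwise characterizations.
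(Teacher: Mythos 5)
Your proof is correct and follows essentially the same route as the paper: the paper also derives the theorem by combining the decomposition \eqref{eq:Maharam} with the pointwise characterizations of Propositions~\ref{prop:Daugavet points in L_1(mu,X)} and \ref{prop:super AD points in L_1(mu,X)}, together with the $\ell_1$-sum stability of Proposition~\ref{proposition: super ADP in ell1sums}. Your ``two-atom'' argument for upgrading super AD to super Daugavet is precisely the way the ``all but one atom'' asymmetry of Proposition~\ref{prop:super AD points in L_1(mu,X)} is meant to be exploited.
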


Contrary to the case of $\ell_1$-sums, from Proposition~\ref{prop: infty sum (x,y) super AD and Y fin-dim, then x is super AD} we get that in order for an $\ell_\infty$-sum of Banach spaces to have the super ADP, then both summands have to be infinite-dimensional. Combining Propositions~\ref{proposition: infty sums X infinite dim x super AD means (x,y) super AD} and \ref{proposition: infty sum (x,y) is super AD, then either x or y is super AD}, we then get the following. 

\begin{proposition}\label{proposition:ellinfty sum super ADP}
Let $X$ and $Y$ be Banach spaces. The space $X\oplus_\infty Y$ has the super ADP if and only if $X$ and $Y$ are both infinite-dimensional spaces, and $X$ has the DP and $Y$ has the super ADP (or vice versa).
\end{proposition}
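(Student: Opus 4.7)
The plan is to combine the pointwise stability results of the preceding subsection, treating the two implications separately.

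For the sufficiency direction, assume without loss of generality that $X$ has the Daugavet property and $Y$ has the super ADP, both spaces being infinite-dimensional. Given $(x,y) \in S_{X \oplus_\infty Y}$, at least one of $\|x\| = 1$ or $\|y\| = 1$ holds. In the first case, $x$ is a super Daugavet point of $X$, and in particular a super AD point; Proposition~\ref{proposition: infty sums X infinite dim x super AD means (x,y) super AD}, applied with the infinite-dimensional space $X$, then yields that $(x,y)$ is super AD in $X \oplus_\infty Y$. The case $\|y\| = 1$ is symmetric, using the super ADP of $Y$ and its infinite-dimensionality. Thus every element of $S_{X\oplus_\infty Y}$ is super AD.

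For the necessity direction, I would first rule out finite-dimensional summands. Were $Y$ finite-dimensional, any $y_0 \in S_Y$ would give $(0, y_0) \in S_{X \oplus_\infty Y}$, which would be super AD by assumption, and Proposition~\ref{prop: infty sum (x,y) super AD and Y fin-dim, then x is super AD} would then force $0$ to be a super AD point in $X$ --- absurd, since super AD points lie on the unit sphere. Hence both $X$ and $Y$ are infinite-dimensional. Next, applying Proposition~\ref{proposition: infty sum (x,y) is super AD, then either x or y is super AD} to pairs of the form $(x, 0)$ and $(0, y)$ with $x \in S_X$ and $y \in S_Y$, one immediately concludes that every $x \in S_X$ is super AD in $X$ and every $y \in S_Y$ is super AD in $Y$; that is, both $X$ and $Y$ have the super ADP.

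The third and most delicate step --- upgrading the super ADP of one of the summands to the full Daugavet property --- is where I expect the main difficulty to lie. I would argue by contradiction: assuming that neither $X$ nor $Y$ has the DP, pick $x_0 \in S_X$ that is super AD but not super Daugavet, together with a non-empty relatively weakly open set $W_1 \subset B_X$ on which $\sup_{u \in W_1} \|x_0 + u\| < 2$, and analogously $y_0 \in S_Y$ with witnessing set $W_2 \subset B_Y$. The plan is to build from $W_1$ and $W_2$ a non-empty relatively weakly open subset $W \subset B_{X \oplus_\infty Y}$ intersecting $S_{X \oplus_\infty Y}$ such that, for every $(u, v) \in W$ and every $\theta \in \T$, the quantity $\|(x_0, y_0) + \theta (u, v)\|_\infty$ stays uniformly bounded away from $2$, contradicting the super AD-ness of $(x_0, y_0)$ guaranteed by the super ADP of the sum. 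The main obstacle is precisely that the $\ell_\infty$-norm is governed by the \emph{maximum} of the two coordinate norms, so engineering this incompatibility requires a coordinated choice: the optimal rotations for $x_0$ at $u$ and for $y_0$ at $v$ should be forced to differ while $W_1$ and $W_2$ simultaneously block every other $\theta \in \T$. Whether this obstruction can be realized uniformly in the weak topology of $B_{X \oplus_\infty Y}$ is the crux of the argument, and would likely be carried out by selecting witnessing functionals tailored to the failure of super Daugavet-ness of $x_0$ and $y_0$.
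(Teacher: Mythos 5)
Your sufficiency argument and the first two pieces of your necessity argument are correct and are exactly what the paper has in mind (combine Propositions~\ref{prop: infty sum (x,y) super AD and Y fin-dim, then x is super AD}, \ref{proposition: infty sums X infinite dim x super AD means (x,y) super AD}, and \ref{proposition: infty sum (x,y) is super AD, then either x or y is super AD}). But look carefully at what your sufficiency proof actually uses: in the case $\norm{x}=1$ you invoke the DP of $X$ only to conclude that $x$ is a super AD point, after which Proposition~\ref{proposition: infty sums X infinite dim x super AD means (x,y) super AD} does all the work; in the case $\norm{y}=1$ you invoke the super ADP of $Y$ directly. So you have in fact proved that the sum has the super ADP as soon as both summands are infinite-dimensional and both have the super ADP --- the Daugavet property was never needed. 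The reason, unlike the $\ell_1$-sum case, is that in $X\oplus_\infty Y$ one only needs the norm of one coordinate of $(x,y)+\theta(u,v)$ to approach $2$; there is no requirement that the same rotation $\theta$ work simultaneously in both coordinates, so no ``uniform'' (i.e.\ super Daugavet) behaviour is required of either summand.

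Consequently your step three cannot be completed, and your suspicion about it was justified: the contradiction you set up does not exist. If both $X$ and $Y$ are infinite-dimensional with the super ADP but without the DP, and you pick $x_0\in S_X$ super AD but not super Daugavet with witnessing set $W_1$, Proposition~\ref{proposition: infty sums X infinite dim x super AD means (x,y) super AD} applied to $x_0$ still makes $(x_0,y_0)$ a super AD point of the sum, so no relatively weakly open $W\subset B_{X\oplus_\infty Y}$ can keep $\max_{\theta\in\T}\norm{(x_0,y_0)+\theta(u,v)}$ uniformly away from $2$. Concretely, taking $X=Y$ to be the renorming produced in Theorem~\ref{theorem: renorming having SAD but not the DP} gives two infinite-dimensional super ADP spaces neither of which has the DP, yet whose $\ell_\infty$-sum has the super ADP by your own sufficiency argument. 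In short, the statement you were asked to prove appears to be misphrased (apparently by analogy with the $\ell_1$-sum Proposition~\ref{proposition: super ADP in ell1sums}, where the DP hypothesis is genuinely needed); the characterization that the cited propositions actually deliver is: $X\oplus_\infty Y$ has the super ADP if and only if $X$ and $Y$ are both infinite-dimensional and both have the super ADP.
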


We may get from this result an example showing that, contrary to the case of the Daugavet property and the ADP, the super ADP does not always pass from a dual Banach space to its predual.

\begin{example}
Let $X=L_1[0,1]\oplus_\infty\K$. It follows from Proposition~\ref{proposition:ellinfty sum super ADP} that $X$ fails the super ADP. However, $X^*=L_\infty[0,1]\oplus_1\K$ does have the super ADP by Proposition~\ref{proposition: super ADP in ell1sums}. 
\end{example}

Finally, we get the following characterization of the super ADP for $C(K,X)$ spaces combining the classical result about the Daugavet property in $C(K,X)$ spaces for perfect $K$ with Proposition~\ref{prop: super AD in C(K,X)}.

\begin{theorem}\label{theorem:characterizing-superADPC(K,X)}
Let $K$ be a compact Hausdorff space and $X$ be a Banach space. Then, we have the following statements. 
\begingroup \parskip=0pt
\begin{enumerate}
        \item If $K$ is perfect, then $C(K,X)$ has the Daugavet property.  
        \item If $K$ is not perfect, then $C(K,X)$ has the super ADP if and only if $X$ is a infinite-dimensional space with the super ADP or $K$ is a singleton and $X:=\K$ (i.e.\ $C(K,X)\equiv \K$).
    \end{enumerate}
\endgroup
\end{theorem}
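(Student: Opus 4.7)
The plan is to prove statement (1) by invoking the classical fact that $C(K,X)$ has the Daugavet property whenever $K$ is a perfect compact Hausdorff space (see \cite{kssw}), so the bulk of the work lies in statement (2). I would split the analysis of (2) into the ``if'' and ``only if'' implications, and systematically reduce to the tools already developed in Subsections~\ref{subsec: SAD in absolute sums} and \ref{subsec: Super AD points in C(K) and L_1(mu)}.

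For the ``if'' direction, suppose $K$ is not perfect. If $K$ is a singleton and $X=\K$, then $C(K,X)\equiv \K$ trivially has the super ADP. Otherwise, assume $X$ is infinite-dimensional with the super ADP. Take any $f\in S_{C(K,X)}$: if $f$ attains its norm at a cluster point of $K$, then by \cite[Theorem~4.2]{MPRZ} the function $f$ is actually a (ccs, hence super) Daugavet point in $C(K,X)$, hence a super AD point. If $f$ does not attain its norm at a cluster point, then $f$ must attain it at some isolated point $t_0$ of $K$, so $f(t_0)\in S_X$; since $X$ has the super ADP, $f(t_0)$ is a super AD point of $X$, and then Proposition~\ref{prop: super AD in C(K,X)} yields that $f$ is a super AD point of $C(K,X)$. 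In either case every unit sphere element is a super AD point, so $C(K,X)$ has the super ADP.

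For the ``only if'' direction, suppose $K$ is not perfect and $C(K,X)$ has the super ADP. If $K$ is a singleton, then $C(K,X)=X$ has the super ADP, and by Proposition~\ref{prop: K has the super ADP} we conclude that either $X$ is infinite-dimensional with the super ADP or $X=\K$. If $K$ is not a singleton, pick an isolated point $t_0\in K$ (which exists since $K$ is not perfect); the set $K\setminus\{t_0\}$ is nonempty and compact, so the evaluation map $f\mapsto(f\vert_{K\setminus\{t_0\}},f(t_0))$ provides the isometric decomposition
\[
C(K,X)=C(K\setminus\{t_0\},X)\oplus_\infty X,
\]
with both summands nontrivial. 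By Proposition~\ref{proposition:ellinfty sum super ADP}, both summands must be infinite-dimensional (so $X$ is infinite-dimensional), and one summand has the DP while the other has the super ADP. In either case $X$ has the super ADP (since DP implies super ADP), completing the proof.

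The main obstacle I anticipate is really just to correctly combine two different regimes of unit sphere elements in the ``if'' direction, namely those attaining their norm at cluster points (where I rely on the ccs Daugavet result of \cite{MPRZ}) and those attaining only at isolated points (where I rely on the new Proposition~\ref{prop: super AD in C(K,X)}). Everything else reduces to checking applicability of the two key external tools --- Proposition~\ref{prop: K has the super ADP} to rule out finite-dimensional $X$ other than $\K$, and Proposition~\ref{proposition:ellinfty sum super ADP} to obtain the structural decomposition --- together with the standard fact about the Daugavet property of $C(K,X)$ for perfect $K$.
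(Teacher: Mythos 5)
Your overall strategy is the one the paper itself sketches: invoke the classical result for perfect $K$ and reduce the non-perfect case to Proposition~\ref{prop: super AD in C(K,X)} and the $\ell_\infty$-decomposition at an isolated point; you are actually more explicit than the paper in spelling out the ``only if'' direction. However, there is a small but genuine gap in your ``if'' direction. Proposition~\ref{prop: super AD in C(K,X)} is stated only for \emph{infinite} compact $K$, whereas a non-perfect $K$ can be a finite set with two or more points. In that case $K$ has no cluster points and $C(K,X)\equiv\left[\bigoplus_{t\in K}X\right]_{\ell_\infty}$, and you cannot literally cite the proposition. The repair is easy: for $f\in S_{C(K,X)}$ choose $t_0$ with $\norm{f(t_0)}=1$, write $C(K,X)\equiv X\oplus_\infty C(K\setminus\{t_0\},X)$, and apply Proposition~\ref{proposition: infty sums X infinite dim x super AD means (x,y) super AD} directly (this is exactly what powers the ``if'' part of Proposition~\ref{prop: super AD in C(K,X)}, but it is valid for any $K$ with an isolated point and infinite-dimensional $X$). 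You could simply use this argument uniformly and dispense with the cluster-point dichotomy unless the norm is attained \emph{only} at cluster points.

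A separate caveat about your ``only if'' step: you lean on Proposition~\ref{proposition:ellinfty sum super ADP}, which in the paper is stated as requiring one summand to have the DP. As stated, that proposition is actually inconsistent with the theorem you are proving: for a finite $K$ with $\lvert K\rvert\geq 2$, the theorem asserts that $\left[\bigoplus_{t\in K}X\right]_{\ell_\infty}$ has the super ADP whenever $X$ is infinite-dimensional with the super ADP, whereas iterated application of that proposition would force $X$ to have the DP. What the preceding Propositions~\ref{prop:infty sum super AD+super AD gives super AD}--\ref{proposition: infty sum (x,y) is super AD, then either x or y is super AD} actually give is the weaker (and correct) statement: $X\oplus_\infty Y$ has the super ADP if and only if $X$ and $Y$ are both infinite-dimensional and both have the super ADP. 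Fortunately your step only extracts from Proposition~\ref{proposition:ellinfty sum super ADP} that both summands are infinite-dimensional and that $X$ has at least the super ADP, which is exactly what the corrected version delivers, so your conclusion stands. It would be cleaner to argue directly from Propositions~\ref{prop: infty sum (x,y) super AD and Y fin-dim, then x is super AD} and \ref{proposition: infty sum (x,y) is super AD, then either x or y is super AD}: evaluating super AD points of the sum of the form $(0,z)$ with $z\in S_X$ shows $z$ must be super AD, so $X$ has the super ADP, and Proposition~\ref{prop: infty sum (x,y) super AD and Y fin-dim, then x is super AD} rules out $\dim X<\infty$.
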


\section{Open questions}\label{section: open questions}
We end the paper with some related questions.

In Theorem~\ref{theorem: super ADP is not Asplund}, we proved that if an infinite-dimensional Banach space has the super ADP, then the norm is $1$-rough. All the examples of super ADP spaces we know are actually $2$-rough, hence we wonder if this is so in general.
\begin{question}
    Let $X$ be an infinite-dimensional Banach space with the super ADP. Is it true that the norm of $X$ is $2$-rough? Or equivalently, is it true that the space $X$ is octahedral?
\end{question}

By the geometric characterization of the Daugavet property in terms of convex combinations of slices, it follows that there is no Daugavet space which is strongly regular. In Theorem~\ref{theorem: super ADP fails CPCP}, we showed that there is no infinite-dimensional CPCP space with the super ADP property. This leaves open whether there could be a super ADP space, which is strongly regular. 

\begin{question}
    Is there an infinite-dimensional super ADP space, which is strongly regular?
\end{question}

Slicely countably determined Banach spaces (SCD for short) were introduced in \cite{AKMMS09} as a natural joint generalization of separable strongly regular (and in particular RNP) Banach spaces and separable Banach spaces spaces not containing $\ell_1$ (and in particular separable Asplund spaces). It was proved there that separable Banach spaces with the Daugavet property fails to be SCD. So it is natural to ask the following. 

\begin{question}
    Does there exist an infinite-dimensional SCD space with the super ADP?
\end{question}

As mentioned in the introduction, spaces with the Daugavet property do not embed into spaces with an unconditional basis. We do not know is this is the case for spaces with the super ADP.

\begin{question}
    Does there exist an infinite-dimensional space with an unconditional basis (or even with a one-unconditional basis) and the super ADP?
\end{question}

Two more questions which remain open to the best of our knowledge are the following.

\begin{question}
    Is there an infinite-dimensional space with the super ADP but without Daugavet points?
\end{question}

\begin{question}
    Let $X$ be an infinite-dimensional dimensional super ADP space. Does $X$ contain a copy of $\ell_1$? Does $X^*$ fail the CPCP?
\end{question}

\section*{Acknowledgements}

The work of J.~Langemets, M.~L\~{o}o, and Y.~Perreau was supported by the Estonian Research Council grant (PRG2545). M.\ Mart\'in and A.\ Rueda Zoca were partially supported by the grant PID2021-122126NB-C31 funded by MICIU/AEI/10.13039/501100011033 and ERDF/EU, by ``Maria de Maeztu'' Excellence Unit IMAG, funded by MICIU/AEI/10.13039/501100011033 with reference CEX2020-001105-M and by Junta de Andaluc\'ia: grant FQM-0185.  The research of A.\ Rueda Zoca was also supported by Fundaci\'on S\'eneca: ACyT Regi\'on de Murcia: grant 21955/PI/22.

\bibliographystyle{amsalpha}

\end{document}